\def\smallddots{\mathinner{\raise7pt\hbox{.}\raise4pt\hbox{.}\raise1pt\hbox{.}}}
\def\smallsdots{\mathinner{\raise1pt\hbox{.}\raise4pt\hbox{.}\raise7pt\hbox{.}}}
\newtheorem{theorem}{Theorem}
\newtheorem{lemma}[theorem]{Lemma}
\newtheorem{corollary}[theorem]{Corollary}
\theoremstyle{definition}
\newtheorem{definition}[theorem]{Definition}
\newtheorem{remark}[theorem]{Remark}
\DeclareMathOperator{\diag}{diag}
\DeclareMathOperator{\rank}{rank}
\begin{document}
\title{Transformations of Matrix Structures Work Again}
\author{Victor Y. Pan$^{[1, 2],[a]}$ 
\and\\
$^{[1]}$ Department of Mathematics and Computer Science \\
Lehman College of the City University of New York \\
Bronx, NY 10468 USA \\
$^{[2]}$ Ph.D. Programs in Mathematics  and Computer Science \\
The Graduate Center of the City University of New York \\
New York, NY 10036 USA \\
$^{[a]}$ victor.pan@lehman.cuny.edu \\
http://comet.lehman.cuny.edu/vpan/  \\
} 
\date{}
\maketitle

\begin{abstract}
In \cite{P90} we proposed to employ Vandermonde and Hankel multipliers to transform 
into each other the matrix structures of  Toep\-litz, Han\-kel, Van\-der\-monde 
and Cauchy types as a means of extending any successful algorithm for the 
inversion of matrices having one of these structures to inverting the matrices 
with the structures of the three other types. Surprising power of this approach 
has been demonstrated in a number of works, which culminated in ingeneous
numerically stable algorithms that approximated the solution of a nonsingular 
Toeplitz linear system in nearly linear (versus previuosly cubic) arithmetic time. 
We first revisit this powerful method, covering it comprehensively, and then 
specialize it to yield a similar acceleration of the known algorithms for 
computations with matrices having structures of Van\-der\-monde or  Cauchy types. 
In particular we arrive at numerically stable approximate multipoint polynomial 
evaluation and interpolation in nearly linear time, by using $O(bn\log^hn)$ flops 
where $h=1$ for evaluation, $h=2$ for interpolation, and $2^{-b}$ is the relative 
norm of the approximation errors.
 
\end{abstract}

\paragraph{Keywords:} 
Transforms of matrix structures,  
Van\-der\-monde matrices,
Cauchy matrices,
Multipole method,
HSS matrices,
Polynomials,
Rational functions,
Multipoint evaluation,
Interpolation

\paragraph{AMS Subject Classification:}
15A04, 15A06, 15A09, 47A65, 65D05, 65F05, 68Q25

\section{Introduction}

\begin{table}[ht]
\caption{Four classes of structured matrices}
\label{t1}.
\begin{center}
\renewcommand{\arraystretch}{1.2}
\begin{tabular}{c|c}
Toeplitz matrices $T=\left(t_{i-j}\right)_{i,j=1}^{n}$&Hankel matrices $H=\left(h_{i+j}\right)_{i,j=0}^{n-1}$ \\
$\begin{pmatrix}t_0&t_{-1}&\cdots&t_{1-n}\\ t_1&t_0&\smallddots&\vdots\\ \vdots&\smallddots&\smallddots&t_{-1}\\ t_{n-1}&\cdots&t_1&t_0\end{pmatrix}$&$\begin{pmatrix}h_0&h_1&\cdots&h_{n-1}\\ h_1&h_2&\smallsdots&h_n\\ \vdots&\smallsdots&\smallsdots&\vdots\\ h_{n-1}&h_n&\cdots&h_{2n-2}\end{pmatrix}$ 
\\ \hline 


Van\-der\-monde matrices 
$V=V_{\bf s}=\left(s_i^{j}\right)_{i,j=0}^{n-1}$&Cauchy matrices $C=C_{\bf s,t}=\left(\frac{1}{s_i-t_j}\right)_{i,j=1}^{n}$ \\
$\begin{pmatrix}1&s_1&\cdots&s_1^{n-1}\\ 1&s_2&\cdots&s_2^{n-1}\\ \vdots&\vdots&&\vdots\\ 1&s_{n}&\cdots&s_{n}^{n-1}\end{pmatrix}$&$\begin{pmatrix}\frac{1}{s_1-t_1}&\cdots&\frac{1}{s_1-t_{n}}\\ \frac{1}{s_2-t_1}&\cdots&\frac{1}{s_2-t_{n}}\\ \vdots&&\vdots\\ \frac{1}{s_{n}-t_1}&\cdots&\frac{1}{s_{n}-t_{n}}\end{pmatrix}$
\end{tabular}
\end{center}
\end{table}

Table \ref{t1} displays four classes of most popular structured matrices,
which
are omnipresent in modern computations for Sciences, Engineering, and Signal
and Image Processing and
which have been naturally extended to larger classes of matrices, 
$\mathcal T$, $\mathcal H$, $\mathcal V$, and $\mathcal C$,
having structures of Toep\-litz, Hankel,
Van\-der\-monde and Cauchy types,
 respectively. 
Such matrices 
can be readily expressed via
their displacements of small ranks, which implies
a number of their further attractive properties:

\begin{itemize}\itemsep=-0.6mm\vspace{-2mm}
\item
Compact compressed representation through
a small number of parameters, typically $O(n)$ parameters
in the case of $n\times n$  matrices
\item
Simple expressions for the inverse 
through the solutions of a small number
of linear systems of equations
wherever the matrix is invertible
\item
Multiplication by vectors in nearly linear arithmetic time
\item
Solution of nonsingular
linear systems of equations with these matrices
in quadratic or nearly linear arithmetic time
\vspace{-2mm}
\end{itemize}

Extensive and highly successful research and implementation work based
on these properties has been continuing for more than three decades.
We follow \cite{P90}
and employ structured matrix multiplications to
 transform the four structures
into each other. For example, 
$\mathcal T \mathcal H=\mathcal H\mathcal T=\mathcal H$, 
$\mathcal H\mathcal H=\mathcal T$, and $V^TV$ is a Hankel matrix.
The paper \cite{P90} showed 
that {\em this technique enables one
  to extend any successful algorithm for  
the
inversion 
of the matrices of any
  of the four classes 
 $\mathcal T$, $\mathcal H$, 
$\mathcal V$, and $\mathcal C$
to the  matrices of the  
three other classes}.
We cover this technique comprehensively
and simplify its presentation versus  \cite{P90} because 
instead of the the
Stein displacements $M-AMB$ in \cite{P90}
we  
employ the Sylvester displacements $AM-MB$ 
and the machinery of operating with them from \cite{P00} and 
 \cite[Section 1.5]{P01}. 

The proposed structure transforms are simple 
but have surprising power where
the transform links
  matrix classes
having distinct
features.
For example, 
the 
matrix
structure of Cauchy type
is invariant in 
row and column interchange
(in contrast to the structures of
Toeplitz and Hankel types)
and enables expansion of
the matrix entries into Loran's series
(unlike the structures of
the three other types).
 Exploiting 
these distinctions
has lead to 
dramatic acceleration of 
the known numerically stable algorithms for 
Toeplitz and Toep\-litz-like linear systems of equations
by  means of their transformation  into Cauchy-like matrices 
and exploiting the above properties of these matrices.

Their invariance to row 
interchange
enabled numerically stable solution in 
quadratic rather than cubic time in
 \cite{GKO95}, \cite{G98}, \cite{R06}, 
but the paper \cite{MRT05}  
(cf. also \cite{CGS07}, \cite{XXG12},   and  \cite{XXCB})
has instead exploited the Loran's expansion of 
the entries of the basic Cauchy matrices 
to obtain their close approximation by HSS 
matrices. (``HSS" is the acronym for ``hierarchically semiseparable".)
This structure of a distinct type
enabled application
of the {\em Multipole/HSS} powerful techniques,
and the resulting 
numerically stable algorithms
approximate the solution 
of a nonsingular  Toeplitz linear system
of equations
in nearly linear (and thus
nearly optimal) arithmetic
time. The intensive work 
in  \cite{XXG12} and  \cite{XXCB}
on extension, refinement and implementation 
of the algorithms has already made
them quite attractive for the users.

Similar advance has not been achieved,
however, for the computations with
matrices having structures of Van\-der\-monde or Cauchy types.
All the cited papers
on Toeplitz computations share their basic 
displacement map, which 
is a specialization of our general
class of the transformations of matrix structures
derived from \cite{P90} (see our comments at the end of Section \ref{scv}).
The 
map
transforms the matrices with the structure
of Toeplitz type into the matrices of the subclasses
of the class $\mathcal C$
linked to FFT and defined by the knot sets 
$\{s_1,\dots,s_n,t_1,\dots,t_n\}$
equally spaced on the unit circle 
$\{z:~|z|=1\}$ of the complex plane.
This covers 
the structures of
Toeplitz but not 
Van\-der\-monde and Cauchy types.

In our present paper
we specify the subclass of {\em CV} and {\em CV-like} matrices,
which are the Cauchy and Cauchy-like matrices, respectively,
having at least
one (but not necessarily both)
of their two basic knot sets $\{s_1,\dots,s_n\}$ or $\{t_1,\dots,t_n\}$
 equally spaced on the unit circle $\{z:~|z|=1\}$.
These are precisely the Cauchy and Cauchy-like matrices that have
FFT-type structured transforms
into the matrices of the class $\mathcal V$
or their 
transposes.
Under this framework 
our main technical step is an extension of the algorithms of
 \cite{MRT05}, \cite{CGS07}, and \cite{XXG12} to proving that
all CV and CV-like matrices can be closely appoximated 
by HSS matrices. As soon as
such an approximation is available,
one just needs to
apply the Multipole
method to the HSS matrices to  obtain 
 numerically stable approximation 
 algorithms that run in nearly linear time
for our tasks for CV matrices, versus quadratic time of the known algorithms.
By applying the FFT-based structured transforms between 
matrices with the structures of CV and Vandermonde types, 
we readily extend these results to the matrices of the latter class and
consequently to the problems of multipoint evaluation and interpolation for 
polynomials. 

The new algorithms approximate within  relative error norm bound $2^{-b}$
 the product of an $n\times n$ CV matrix  
by a vector by using $O(bn\log n)$ flops
and the solution of a nonsingular CV linear system of $n$
equations by using $O(bn\log^2n)$ flops.
FFT-based structured transforms 
extend these algorithms and complexity bounds to 
computations with
Van\-der\-monde matrices 
and to
approximate
multipoint evaluation and interpolation for 
polynomials.
The resulting nearly linear time bounds are nearly optimal,
but still seem to be overly pessimistic, in view 
of the results
of the extensive tests in  \cite{XXG12} 
for the similar HSS computations (see our Remark \ref{reimpl}).
The cited results are readily extended to CV-like matrices
and consequently to the matrices having structure of Van\-der\-monde type.

Various  extensions,
ameliorations,  refinements,
and nontrivial  specializations
of the proposed  methods
can be interesting. Most valuable 
would be
new transforms among 
various new classes of structured matrices,
with significant algorithmic applications.
At the end of Section 
\ref{svmv}
we sketch a natural extension of our techniques to the general class of 
Cauchy and Cauchy-like matrices, but
indicate that this generally
 complicates the control over the output errors.
It can be interesting that even a very 
crude variant of our techniques 
(which proceeds with a limited use of the HSS algorithms)
still accelerates the known numerical algorithms
for multipoint polynomial evaluation by a factor of $\sqrt {n/\log n}$
(see Remark \ref{recrd}).

For a sample further application, recall that
the current best package of subroutines for polynomial
root-finding, MPSolve, is reduced essentially to
recursive  application of the   Ehr\-lich--Aberth algorithm,
and consequently to recursive numerical multipoint polynomial evaluation.
For this task  MPSolve uses a quadratic time algorithm,
which is the users'  current choice.
So our present acceleration from quadratic to a nearly linear time
can be translated into the same acceleration of MPSolve.

We organize our pesentation  as follows.
After recalling some definitions and basic facts
on general  matrices  and on four classes of structured matrices
$\mathcal T$, $\mathcal H$, $\mathcal V$, and $\mathcal C$
in the next three sections, we cover in some detail the 
transformations of matrix structures among these classes
in Section \ref{sdtr}, recall 
the class of HSS matrices in Section \ref{shss},
estimate numerical ranks of Cauchy and Cauchy-like
 matrices of a large class
in Section \ref{snrqs}, and extend these estimates to 
compute the HSS approximations of these matrices in 
Section \ref{shssapr} and to approximate the products
of these matrices and their inverses by a vector
in Section \ref{svmv}. We conclude the paper with Section \ref{scnc}.

For simplicity we assume square structured matrices throughout,
but our study can be readily extended to the case 
of rectangular matrices.

\section{Some definitions and basic facts}\label{sdef}

Hereafter ``flop" stands for ``arithmetic operation"; 
 the concepts ``large", ``small", ``near", ``close", ``approximate", 
``ill  conditioned" and ``well conditioned" 
are 
quantified in the context. 
Next we recall and extend some basic definitions  
and facts 
on computations with general and structured  matrices
(cf. \cite{GL96}, \cite{S98}, \cite{P01}).


\subsection{General matrices}\label{sgen}



$M=(m_{i,j})_{i,j=1}^{m,n}$ is an $m\times n$ matrix,
$M^T$ and $M^H$ are its transpose 
and  Hermitian (complex conjugate)
transpose,
respectively. 
We write $M^{-T}$ for $(M^T)^{-1}=(M^{-1})^T$.

$(B_1~|~\dots~|~B_n)$ denotes a $1\times n$ block matrix with the blocks $B_1,\dots,B_n$.
$\diag(B_1,\dots,B_n)=\diag(B_j)_{j=1}^n$ is an $n\times n$ block diagonal matrix with 
the diagonal blocks $B_1,\dots,B_n$. 
In the case of scalar blocks $s_1,\dots,s_n$
we arrive at a vector ${\bf s}=(s_j)_{j=1}^n$ and
an $n\times n$ diagonal matrix $D_{\bf s}=\diag({\bf s})=\diag(s_j)_{j=1}^n$ with 
the diagonal entries $s_1,\dots,s_n$.

The $n$ coordinate vectors
${\bf e}_1,\dots,{\bf e}_n$ of a dimension $n$ form
the $n\times n$ identity
matrix $I_n=({\bf e}_1~|~\dots~|~{\bf e}_n)$ and 
 the  $n\times n$ reflection matrix $J_n=({\bf e}_n~|~\dots~|~{\bf e}_1)$.
 $J_n=J_n^T=J_n^{-1}$.
We write $I$ and $J$
where the matrix size  
is not important or is defined by context.

{\bf Preprocessors.} For three nonsingular matrices $P$, $M$, and $N$
 and a vector ${\bf b}$,
the equations
\begin{equation}\label{eqprepr}
M^{-1}=N(PMN)^{-1}P,~~~ 
PMN{\bf y}=P{\bf b},~~{\bf x}=N{\bf y} 
\end{equation}
 reduce the
inversion of the matrix $M$ and the solution of 
a linear system
of equations $M{\bf x}={\bf b}$
to the inversion of the product $PMN$ 
and the solution of 
the linear system 
$PMN{\bf y}=P{\bf b}$,
respectively.
For some important classes of matrices $M$
this preprocessing can simplify 
dramatically the inversion of a matrix 
and the solution of a linear system of equations.

{\bf Generators.}
Given an $m\times n$ matrix $M$ of a rank $r$ 
and an integer $l\ge r$, we have a
nonunique expression
$M=FG^T$ for pairs $(F,G)$ of matrices of sizes 
$m\times l$ and $n\times l$, respectively. 
We call such a pair $(F,G)$
a {\em generator of length} $l$ for the matrix $M$, which
 is the shortest for $l=r$.

\begin{theorem}\label{thcompr} (Cf. \cite{BA80}, \cite{M80}, \cite{P93}, \cite{GE96}, \cite[Section 4.6.2]{P01}.)
Given a generator of a length $l$  for an $n\times n$ matrix $M$ having a rank $r$, $r\le l\le n$, 
it is sufficient to use $O(l^2n)$ flops to compute a generator of length $r$ for the matrix $M$.
\end{theorem}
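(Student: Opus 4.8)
The plan is to reduce the given rank-$r$ matrix $M = FG^T$, with $F$ and $G$ of size $n\times l$, to a shortest generator by operating entirely on the $n\times l$ factors and on auxiliary $l\times l$ matrices, never forming the $n\times n$ matrix $M$ itself (writing out $M$ would already cost order $n^2$, exceeding the target when $l$ is small).

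First I would compute, say by Householder orthogonalization, economy-size QR factorizations $F = Q_F R_F$ and $G = Q_G R_G$, where $Q_F$ and $Q_G$ have $l$ orthonormal columns and $R_F$, $R_G$ are $l\times l$ and upper triangular; each such factorization of an $n\times l$ matrix costs $O(l^2 n)$ flops. This yields $M = Q_F\, (R_F R_G^T)\, Q_G^T$, and forming the $l\times l$ product $W := R_F R_G^T$ costs $O(l^3)=O(l^2 n)$ flops, since $l\le n$. Because $Q_F$ and $Q_G$ have full column rank $l$, multiplication by them on the left and by $Q_G^T$ on the right preserves rank, so $\rank W = \rank M = r$.

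Next I would compute a rank-revealing factorization $W = U V^T$ with $U$ and $V$ of size $l\times r$ — for example, truncate the singular value decomposition of $W$ to its $r$ nonzero singular values, or apply a rank-revealing QR to $W$ (cf. the cited \cite{GE96}); either costs $O(l^3)=O(l^2 n)$ flops. Then $M = (Q_F U)(Q_G V)^T$, and the two products $Q_F U$ and $Q_G V$ (each $n\times l$ times $l\times r$) cost $O(nlr)=O(l^2 n)$ flops. Setting $F' := Q_F U$ and $G' := Q_G V$, both of size $n\times r$, delivers the desired generator $M = F' G'^T$ of length $r$, and summing the stages gives the total bound $O(l^2 n)$ claimed in the theorem.

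The one genuinely delicate point is the rank determination: certifying that $W$ has rank exactly $r$ and extracting numerically reliable bases for its row and column spaces is precisely what the rank-revealing QR (or the SVD truncation) is for, and in finite-precision arithmetic this is the step that requires care; in exact arithmetic any of the above factorizations makes it routine. Everything else is bookkeeping to check that no stage exceeds the $O(l^2 n)$ budget, which is immediate from $r\le l\le n$.
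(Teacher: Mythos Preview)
Your argument is correct and complete: the two economy QR factorizations, the $l\times l$ core product $W$, its rank-revealing decomposition, and the final multiplications all fit within the $O(l^2n)$ budget, and the rank identity $\rank W=\rank M=r$ follows because $Q_F$ and $Q_G$ have orthonormal (hence full-rank) columns.

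Note, however, that the paper does not actually supply a proof of this theorem; it merely states the result and refers the reader to \cite{BA80}, \cite{M80}, \cite{P93}, \cite{GE96}, and \cite[Section~4.6.2]{P01}. Your approach---compress to an $l\times l$ core and apply a rank-revealing factorization there---is precisely the standard technique behind those references (the citation of \cite{GE96} points to exactly the rank-revealing QR step you invoke), so there is nothing to contrast: your write-up is a clean self-contained realization of the argument the paper is outsourcing to the literature.
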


{\bf Norm, conditioning, orthogonality, numerical rank, a perturbation norm bound.}
$||M||=||M^H||=||M||_2$ is the (Euclidean) 2-norm of a matrix $M$.


For a fixed tolerance $\tau$
the minimum rank of matrices 
in the $\tau$-neighborhood of a matrix $M$
is said to be its $\tau$-{\em rank}.
The  {\em numerical rank} of a matrix 
is its $\tau$-rank for a small positive $\tau$.
A matrix is called ill conditioned 
if it has a close neighbor of a smaller rank
or eqiuivalently
if its rank exceeds its numerical rank.
Otherwise it
is called {\em well conditioned}. If
a matrix $M$ is 
ill conditioned,
one must compute its inverse and 
the solution of a linear system $M{\bf x}={\bf f}$
with a high precision
to ensure meaningful output for these problems, but 
{\em  not for multiplication} 
by a vector.

A matrix $M$ is {\em unitary} or {\em orthogonal} if $M^HM=I$ or $MM^H=I$.
It is {\em quasiunitary} if $cM$ is unitary for a constant $c$.
Such a matrix $U$ has full rank and is very well conditioned: 
its distance to the closest matrix of a smaller rank is equal to $||U||=1$.


\begin{theorem}\label{thpert} (See \cite[Corollary 1.4.19]{S98} for $P= -M^{-1}E$.)
Suppose $M$ and $M+E$ are two nonsingular matrices of the same size
and $||M^{-1}E||  =\theta<1$. Then
$||I-(M+E)^{-1}M||  \le \frac{\theta}{1-\theta}$ and
$\||(M+E)^{-1}-M^{-1}||\le \frac{\theta}{1-\theta}||M^{-1}||$.
In particular $\||(M+E)^{-1}-M^{-1}||\le 0.5||M^{-1}||$
if $\theta\le 1/3$.
\end{theorem}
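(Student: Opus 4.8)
The plan is to reduce everything to the Neumann series for $I + M^{-1}E$. First I would factor $M + E = M(I + P)$ where $P := M^{-1}E$, so that $||P|| = \theta < 1$ by hypothesis. Since $||P|| < 1$, the matrix $I + P$ is nonsingular and its inverse is given by the convergent Neumann series $(I+P)^{-1} = \sum_{k\ge 0}(-P)^k$, whence $||(I+P)^{-1}|| \le \sum_{k\ge 0}\theta^k = \frac{1}{1-\theta}$. Alternatively one can avoid the series and argue directly: for any unit vector ${\bf x}$ one has $||(I+P){\bf x}|| \ge 1 - \theta$, so $||(I+P)^{-1}|| \le \frac{1}{1-\theta}$; this route also re-derives the nonsingularity of $I+P$ from that of $M+E$.

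Next I would compute $(M+E)^{-1}M$. From the factorization, $(M+E)^{-1} = (I+P)^{-1}M^{-1}$, hence $(M+E)^{-1}M = (I+P)^{-1}$. Therefore $I - (M+E)^{-1}M = I - (I+P)^{-1} = (I+P)^{-1}\big((I+P)-I\big) = (I+P)^{-1}P$, and taking norms with submultiplicativity gives $||I - (M+E)^{-1}M|| \le ||(I+P)^{-1}||\,||P|| \le \frac{\theta}{1-\theta}$, which is the first claimed bound.

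For the second bound I would write $(M+E)^{-1} - M^{-1} = \big((M+E)^{-1}M - I\big)M^{-1} = -\big(I - (M+E)^{-1}M\big)M^{-1}$ and apply the first bound to get $||(M+E)^{-1} - M^{-1}|| \le ||I - (M+E)^{-1}M||\,||M^{-1}|| \le \frac{\theta}{1-\theta}||M^{-1}||$. Finally, since the function $t \mapsto t/(1-t)$ is increasing on $[0,1)$, the hypothesis $\theta \le 1/3$ yields $\frac{\theta}{1-\theta} \le \frac{1/3}{2/3} = 1/2$, which gives the last assertion. I do not expect any genuine obstacle here; the only point requiring care is the justification of convergence of the Neumann series (equivalently, the lower bound $||(I+P){\bf x}|| \ge (1-\theta)\,||{\bf x}||$), which is exactly where the strict inequality $\theta < 1$ enters.
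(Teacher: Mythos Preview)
Your argument is correct and is precisely the standard Neumann-series route the paper points to: the paper does not give its own proof but cites \cite[Corollary 1.4.19]{S98} with the hint $P=-M^{-1}E$, which is exactly your factorization $M+E=M(I+M^{-1}E)$ followed by the bound $\|(I+P)^{-1}\|\le 1/(1-\theta)$. Nothing more is needed.
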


\subsection{The classes of Toeplitz, Hankel, Vandermonde and Cauchy
 matrices, some subclasses and factorizations, polynomial evaluation 
and interpolation}\label{sfour}

For larger integers $n$ the entries of  an $n\times n$ Van\-der\-monde matrix $V_{\bf s}$ 
 vary in magnitude greatly unless $|s_i|\approx 1$ for all $i$,
as is the case with 
 the  Van\-der\-monde matrices $\Omega$ and $\Omega^H$ below, which are
 unitary up to scaling by $\frac{1}{\sqrt n}$.

{\bf DFT and DFT-based matrices.} (See  \cite[Sections 1.2, 3.4]{BP94}.)
Write $\omega_n={\rm exp}(\frac{2\pi}{n}\sqrt{-1})$ to denote 
a primitive $n$th root of $1$. 
Its powers $1,\omega_n,\dots,\omega_n^{n-1}$
are equally spaced on
the unit circle $\{z:|z|=1\}$.
Let
$\Omega=\Omega_n=(\omega_n^{ij})_{i,j=0}^{n-1}$ denote the $n\times n$ matrix of 
{\em DFT}, that is  of the 
{\em discrete Fourier transform} at $n$ points. 
 $\Omega$ and $\Omega^H$
are quasiunitary,
whereas
  $\frac{1}{\sqrt n}\Omega$ and $\frac{1}{\sqrt n}\Omega^H$
and $\Omega^{-1}=\frac{1}{n}\Omega^H$
are unitary matrices,
 because $\Omega \Omega^H=nI$.
See, e.g., \cite[Sections 1.2 and 3.4]{BP94}
on a proof of the following theorem and on the numerical stability 
of the supporting algorithms.

\begin{theorem}\label{thdft}
For any vector ${\bf v}=(v_i)_{i=1}^{n}$
one can compute the vectors $\Omega{\bf v}$
and $\Omega^{-1}{\bf v}$ by using $O(n\log n)$
flops. If $n=2^k$ 
 is a power of 2, then
one can  compute the vectors $\Omega{\bf v}$
and $\Omega^{-1}{\bf v}$
by using
$0.5n\log_2 n$
 and 
$0.5n\log_2 n+n$ flops,
respectively.
\end{theorem}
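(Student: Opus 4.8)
The plan is to prove the power-of-two case by the classical divide-and-conquer (Cooley--Tukey) splitting and then to reduce the general case to it. Writing $\Omega{\bf v}=(w_i)_{i=0}^{n-1}$ with $w_i=\sum_{j=0}^{n-1}v_j\omega_n^{ij}$, I would split the sum over even and odd indices $j$, using $\omega_n^2=\omega_{n/2}$, to obtain $w_i=w_i'+\omega_n^i w_i''$, where $(w_i')_{i=0}^{n/2-1}$ and $(w_i'')_{i=0}^{n/2-1}$ are the two DFTs of size $n/2$ applied to the even-indexed and the odd-indexed subvectors of ${\bf v}$. The periodicity $\omega_{n/2}^{i+n/2}=\omega_{n/2}^i$ shows that the same two half-size transforms also yield $w_{i+n/2}=w_i'-\omega_n^i w_i''$ for $i=0,\dots,n/2-1$, so that all $n$ outputs are recovered from the two half-size outputs by $n/2$ multiplications by the twiddle factors $\omega_n^i$ together with $n$ additions and subtractions (the ``butterfly'' stage). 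Unrolling the resulting recurrence $F(n)=2F(n/2)+O(n)$ over $\log_2 n$ levels, each of which contributes $n/2$ twiddle-factor multiplications and $n$ additions, yields the asserted bound $0.5n\log_2 n$ (for the dominant multiplicative cost, the additive cost being of the same order). The only real care needed here is this constant-tracking of the butterfly recurrence.

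For $\Omega^{-1}{\bf v}$ I would invoke the identity $\Omega^{-1}=\tfrac1n\Omega^H=\tfrac1n\bar\Omega$, already recorded above from $\Omega\Omega^H=nI$. Since $\bar\Omega$ is the DFT matrix for the primitive $n$th root of unity $\omega_n^{-1}$, the very same algorithm computes $\bar\Omega{\bf v}$ within the same bound; the subsequent $n$ scalar multiplications by $1/n$ then produce $\Omega^{-1}{\bf v}$ and account for the extra $n$ flops.

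It remains to treat an arbitrary $n$ that is not a power of two. Here I would reduce the length-$n$ DFT to convolutions of length a power of two via Bluestein's chirp-transform identity $2ij=i^2+j^2-(i-j)^2$, which rewrites $w_i=\zeta^{i^2}\sum_{j=0}^{n-1}\bigl(v_j\zeta^{j^2}\bigr)\zeta^{-(i-j)^2}$ with $\zeta=\exp(\pi\sqrt{-1}/n)$, i.e.\ a correlation of two explicit length-$n$ sequences. Zero-padding to the next power of two $N\le 2n$ and performing a constant number of length-$N$ FFTs as above solves this in $O(N\log N)=O(n\log n)$ flops. (Alternatively, a mixed-radix version of the even/odd splitting handles composite $n$ directly while Rader's reduction handles prime $n$; any of these routes gives the stated bound.) The numerical-stability assertions for these routines are the ones cited from \cite{BP94} and need not be reproved here, so the one genuine obstacle is the constant-tracking in the butterfly recurrence of the power-of-two FFT, everything else being a standard reduction.
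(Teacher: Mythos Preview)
Your plan is sound and is the classical Cooley--Tukey derivation, but note that the paper itself does not prove this theorem: the sentence immediately preceding it defers the proof to \cite[Sections~1.2 and~3.4]{BP94}, and those sections contain exactly the radix-$2$ butterfly recursion (and the reductions for general $n$) that you outline. So there is no in-paper argument to compare against; your proposal simply reproduces the cited one.

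One small point worth tightening: the bound $0.5\,n\log_2 n$ cannot hold if ``flops'' counts all complex arithmetic operations, since each butterfly stage costs $n/2$ multiplications \emph{and} $n$ additions/subtractions, giving $\tfrac32 n\log_2 n$ in total. The stated constant matches only the count of (nonscalar) complex multiplications, which is the convention used in \cite{BP94}. You allude to this in your parenthetical remark, but it would be cleaner to state up front that here ``flop'' means a complex multiplication, so that the recurrence $M(n)=2M(n/2)+n/2$, $M(1)=0$, resolves exactly to $M(n)=\tfrac12 n\log_2 n$, and the extra $n$ for $\Omega^{-1}{\bf v}$ comes from the $n$ scalar divisions by $n$.
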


\bigskip

{\bf Cauchy and Van\-der\-monde  matrices
and polynomial evaluation 
and interpolation.} (See Table \ref{t1} and \cite[Chapters 2 and 3]{P01}.)
It holds that
\begin{equation}\label{eqctr}
C_{\bf s,t}=-C_{\bf t,s}^T,
\end{equation}
\begin{equation}\label{eqfhr}
C_{\bf s,t}=\diag(t(s_i)^{-1})_{i=1}^{n}V_{\bf s}V^{-1}_{\bf t}\diag(t'(t_i))_{i=1}^{n}
\end{equation}
where ${\bf s}=(s_i)_{i=1}^{n}$, 
${\bf t}=(t_i)_{i=1}^{n}$, and
$t(x)=\prod_{i=0}^{n-1}(x-t_i)$.

Equation (\ref{eqfhr}) expresses a Cauchy matrix
$C_{\bf s,t}$ 
through the Van\-der\-monde matrix $V_{\bf s}$,
the inverse $V^{-1}_{\bf t}$ of the  Van\-der\-monde matrix
$V_{\bf t}$, the  
coefficients of
the auxiliary polynomial $t(x)$
defined by its roots
$t_0,\dots,t_{n-1}$, and
the values of this  polynomial
and its derivative $t'(x)$,  each at $n$ points.
Part (i) of the following simple theorem 
 states that {\em  polynomial multipoint evaluation 
and interpolation} with the knots $s_1,\dots,s_n$
are  equivalent to 
multiplication of the Van\-der\-monde  matrix
$V_{\bf s}$ by the coefficient vector of the polynomial
and the solution of the associated linear 
 system of equations with this matrix,
respectively.
Part (ii)  of the theorem shows 
shows equivalence of rational multipoint evaluation and interpolation to 
 the  similar equations for Cauchy (rather than Van\-der\-monde)  matrix.
Part (iii)  of the theorem shows  that the
reconstruction of the  polynomial coefficients
from the roots can be reduced to polynomial interpolation and consequently to 
solving a Van\-der\-monde
linear system of equations. 
on these links and similar links of rational
multipoint evaluation and interpolation to Cauchy matrices.
Together with equation (\ref{eqfhr}), the theorem
also links  
multipoint evaluation and interpolation 
for polynomials to the same tasks for
  rational functions (cf. \cite[Chapter 3]{P01}).

\begin{theorem}\label{thevint}
(i) Let $p(x)=\sum_{i=0}^{n-1}p_ix^i$, ${\bf p}=(p_i)_{i=0}^{n-1}$,
 ${\bf s}=(s_i)_{i=0}^{n-1}$, and ${\bf v}=(v_i)_{i=0}^{n-1}$ .
Then the equations $p(s_{i})=v_i$ hold for $i=0,1,\dots,n-1$
if and only if $V_{\bf s}{\bf p}={\bf v}$.
(ii) For a rational function $v(x)=\sum_{j=1}^n\frac{u_j}{x-t_j}$
with $n$ distinct poles $t_1,\dots,t_n$ and for $n$
distinct scalars $s_1,\dots,s_n$, write 
${\bf s}=(s_i)_{i=1}^n$, ${\bf t}=(t_j)_{j=1}^n$,
${\bf u}=(u_j)_{j=1}^n$, ${\bf v}=(v_i)_{i=1}^n$.
Then the equations $v_i=v(s_i)$, $i=1,\dots,n$ hold if and only if
$C_{\bf s,t}{\bf u}={\bf v}$.
(iii) The equation $\prod_{i=0}^{n-1}(x-t_i)=x^n+v(x)$, 
for $v(x)=\sum_{i=0}^{n-1}t_ix^i$ and for $n$ distinct knots $t_0,\dots,t_{n-1}$,
is equivalent
to the linear system of $n$ equations,
$v(t_i)=-t_i^n$ for $i=0,\dots,n-1$.
\end{theorem}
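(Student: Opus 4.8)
The plan is to verify each of the three parts directly from the definitions in Table~\ref{t1}, the only point requiring an argument beyond bookkeeping being a degree count in part~(iii).

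For part~(i), I would simply expand the matrix--vector product: by the definition $V_{\bf s}=(s_i^{j})_{i,j=0}^{n-1}$, the $i$-th coordinate of $V_{\bf s}{\bf p}$ equals $\sum_{j=0}^{n-1}s_i^{j}p_j=p(s_i)$, so $V_{\bf s}{\bf p}={\bf v}$ holds coordinate by coordinate if and only if $p(s_i)=v_i$ for $i=0,\dots,n-1$. Part~(ii) is the same computation for a different matrix: by $C_{\bf s,t}=\bigl(1/(s_i-t_j)\bigr)_{i,j=1}^{n}$, the $i$-th coordinate of $C_{\bf s,t}{\bf u}$ is $\sum_{j=1}^{n}u_j/(s_i-t_j)=v(s_i)$, where the hypotheses that the poles $t_j$ are distinct and that $s_i\neq t_j$ make every term well defined; hence $C_{\bf s,t}{\bf u}={\bf v}$ if and only if $v(s_i)=v_i$ for all $i$.

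For part~(iii) I would introduce the auxiliary polynomial $q(x)=x^n+v(x)-\prod_{i=0}^{n-1}(x-t_i)$. Both $x^n+v(x)$ and $\prod_{i=0}^{n-1}(x-t_i)$ are monic of degree $n$, so the leading terms cancel and $\deg q\le n-1$. The claimed identity $x^n+v(x)=\prod_{i=0}^{n-1}(x-t_i)$ is precisely the statement $q\equiv 0$. If the identity holds, evaluating at $x=t_i$ gives $t_i^n+v(t_i)=0$, i.e.\ $v(t_i)=-t_i^n$. Conversely, if $v(t_i)=-t_i^n$ for every $i$, then $q$ vanishes at the $n$ distinct knots $t_0,\dots,t_{n-1}$, and a polynomial of degree at most $n-1$ with $n$ distinct roots is identically zero, so $q\equiv 0$ and the identity holds. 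Applying part~(i) to the polynomial $v$ and the knot vector ${\bf t}$ then shows that the system $v(t_i)=-t_i^n$ is the Vandermonde linear system $V_{\bf t}{\bf v}=-(t_i^n)_{i=0}^{n-1}$, which is how the reconstruction of coefficients from roots is reduced to solving a Vandermonde system.

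I do not expect a genuine obstacle here; all three parts are unwindings of definitions. The one place to be careful is the degree-count step in part~(iii): it is the bound $\deg q\le n-1$ together with the hypothesis that the $n$ knots $t_0,\dots,t_{n-1}$ are distinct that promotes ``$q$ vanishes at $n$ points'' to ``$q\equiv 0$'', and the statement would fail without distinctness. A secondary, purely cosmetic issue is that in part~(iii) the symbols $t_i$ play a double role (roots of $\prod_i(x-t_i)$ and coefficients of $v(x)=\sum_i t_ix^i$); I would either retain this identification, since it is exactly the content being asserted, or momentarily write $v(x)=\sum_i c_ix^i$ to keep the evaluation clean.
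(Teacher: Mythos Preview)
Your proposal is correct and is exactly the natural verification. The paper itself does not supply a proof of this theorem at all---it calls the result ``simple'' in the preceding paragraph and states it without argument---so there is nothing to compare against beyond noting that your direct unwindings of the matrix--vector products in parts~(i) and~(ii) and the degree-count argument in part~(iii) are precisely what the paper is implicitly relying on.
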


\begin{theorem}\label{thcv} 
(i) $\det (V)=\prod_{i<k}(s_i-s_k)$ and 
$\det (C)=\prod_{i<j}(s_i-s_j)(t_i-t_j)/\prod_{i,j}(s_i-t_j)$,
and so
the matrices $V$ and $C$ of Table \ref{t1}
are nonsingular where all scalars $s_1,\dots,s_n,t_1,\dots,t_n$
are distinct. (ii) A row interchange preserves both Van\-der\-monde	  
and Cauchy structures. A column interchange preserves 	  
Cauchy structure. 
\end{theorem}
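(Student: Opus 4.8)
The plan is to prove the two determinant identities in part~(i) first and read off nonsingularity from them, and then to settle part~(ii) by inspecting directly how a single row or column transposition acts on the two parametrized families.

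For $\det(V)$ I would use the classical divisibility argument: viewed as a polynomial in $s_1,\dots,s_n$, the determinant is homogeneous of total degree $\binom{n}{2}$ and vanishes whenever $s_i=s_k$ with $i\ne k$ (two equal rows), hence is divisible by the squarefree polynomial $\prod_{i<k}(s_i-s_k)$, which is itself of degree $\binom{n}{2}$; the two therefore agree up to a constant, and comparing the coefficient of the single monomial $\prod_i s_i^{\,i-1}$ coming from the identity permutation fixes that constant to $\pm1$ (the global sign being an artifact of the ordering convention and immaterial for what follows). In particular $\det(V)\ne0$ as soon as the $s_i$ are pairwise distinct, so every such $V$ is invertible.

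Given this, I would derive Cauchy's formula from the factorization~(\ref{eqfhr}) recorded above rather than from a fresh induction. Under the distinctness hypothesis the matrix $V_{\bf t}$ is invertible by the previous step and $t(s_i)=\prod_j(s_i-t_j)\ne0$, so taking determinants in~(\ref{eqfhr}) gives
\begin{equation*}
\det(C_{\bf s,t})=\frac{\prod_i t'(t_i)}{\prod_i t(s_i)}\cdot\frac{\det(V_{\bf s})}{\det(V_{\bf t})}.
\end{equation*}
Substituting $\prod_i t(s_i)=\prod_{i,j}(s_i-t_j)$ and $\prod_i t'(t_i)=\prod_i\prod_{k\ne i}(t_i-t_k)=\prod_{i\ne k}(t_i-t_k)$, then simplifying against $\det(V_{\bf s})/\det(V_{\bf t})$ via $\prod_{i\ne k}(t_i-t_k)=(-1)^{\binom{n}{2}}\bigl(\prod_{i<k}(t_i-t_k)\bigr)^2$, one collects the powers of $-1$ and lands on $\det(C_{\bf s,t})=\prod_{i<j}(s_i-s_j)(t_i-t_j)/\prod_{i,j}(s_i-t_j)$, up to the same harmless sign. (A self-contained alternative: subtract the last row from every other row, use $\tfrac1{s_i-t_j}-\tfrac1{s_n-t_j}=\tfrac{s_n-s_i}{(s_i-t_j)(s_n-t_j)}$ to pull $s_n-s_i$ out of row $i$ and $1/(s_n-t_j)$ out of column $j$, and induct on $n$.) Either way, this product is a nonzero number precisely when the $s_i$ are distinct and the $t_j$ are distinct, and it is defined when $s_i\ne t_j$, so ``all scalars distinct'' forces $C$ nonsingular.

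For part~(ii), interchanging rows $i$ and $i'$ of $V_{\bf s}=(s_i^{\,j-1})$ simply replaces ${\bf s}$ by the vector obtained from it by transposing its $i$-th and $i'$-th entries, so the result is again a Vandermonde matrix $V_{\bf s'}$; similarly, interchanging rows $i,i'$ (resp.\ columns $j,j'$) of $C_{\bf s,t}=(1/(s_i-t_j))$ transposes the corresponding entries of ${\bf s}$ (resp.\ of ${\bf t}$) and returns $C_{\bf s',t}$ (resp.\ $C_{\bf s,t'}$). A \emph{column} interchange of $V_{\bf s}$, by contrast, permutes the column exponents and so breaks the requirement that the $j$-th column be the entrywise $(j-1)$-st power of the second column, which is exactly why the statement is asymmetric; the same bookkeeping, applied to a displacement generator, shows that these interchanges preserve the structured classes $\mathcal V$ and $\mathcal C$ as well. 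There is nothing deep here: the only genuine computation is the simplification turning~(\ref{eqfhr}) (or the row-reduction recursion) into Cauchy's closed form, and the only thing that needs care is the sign and index-origin bookkeeping in the two determinant formulas.
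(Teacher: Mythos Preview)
Your argument is correct. The paper itself states Theorem~\ref{thcv} without proof, treating both determinant formulas and the interchange claims as classical facts; there is thus no ``paper's own proof'' to match. Your divisibility argument for $\det(V)$ is the standard one, and your derivation of Cauchy's formula by taking determinants in the factorization~(\ref{eqfhr}) is a nice touch that ties the result to machinery already set up in the paper (the alternative row-reduction induction you sketch is the more common textbook route). The sign caveats you flag are appropriate: the paper's stated formula $\det(V)=\prod_{i<k}(s_i-s_k)$ indeed carries a global sign that depends on the row/column ordering convention, and the same propagates into the Cauchy formula; since only nonvanishing is used downstream, this is harmless, exactly as you say. Part~(ii) is pure bookkeeping and your treatment is adequate.
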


Next we will specify a subclass of Cauchy matrices most closely linked 
 to Van\-der\-monde and transposed Van\-der\-monde
 matrices (cf. Definition \ref{defcvdft}). 
At first
write
\begin{equation}\label{eqvf}
V_f=((f\omega_n^{i-1})^{j-1})_{i,j=1}^{n}=\Omega\diag(f^{j-1})_{j=1}^{n},
\end{equation}
\begin{equation}\label{eqcef}
C_{{\bf s},f}=\Bigg(\frac{1}{s_i-f\omega_n^{j-1}}\Bigg)_{i,j=1}^{n},~
C_{e,{\bf t}}=\Bigg(\frac{1}{e\omega_n^{i-1}-t_j}\Bigg)_{i,j=1}^{n},~
C_{e,f}=\Bigg(\frac{1}{e\omega_n^{i-1}-f\omega_n^{j-1}}\Bigg)_{i,j=1}^{n}
\end{equation}
for two distinct scalars $e$ and $f\neq 0$. Then observe that $\Omega=V_1=V_{\bf s}$ 
for ${\bf s}=(\omega_n^{i-1})_{i=1}^{n}$,
$\Omega^H=V_{\bf t}$ for ${\bf t}=(\omega_n^{1-i})_{i=1}^{n}$
(so both $\Omega$ and $\Omega^H$are Van\-der\-monde matrices),
 $C_{{\bf s},f}=C_{\bf s,t}$,
$C_{e,{\bf t}}=C_{\bf s,t}$,
$C_{e,f}=C_{\bf s,t}$ where ${\bf s}=(e\omega_n^{j-1})_{i,j=1}^{n}$ and/or ${\bf t}=(f\omega_n^{j-1})_{i,j=1}^{n}$,
and {\em the matrices} $V_f$ {\em are quasiunitary} where $|f|=1$.

Now let ${\bf t}=(f\omega_n^{j-1})_{j=1}^{n}$, 
obtain $t(x)=x^n-f^n$, $t(x)=nx^{n-1}$, $t(s_i)=s_i^n-f^n$,
$t'(t_i)=nf^{n-1}\omega_n^{1-i}$  for all $i$,
and 
$nV_f^{-1}=\diag(f^{1-i})_{i=1}^{n}\Omega^H$,
substitute into (\ref{eqfhr}), 
and obtain

\begin{equation}\label{eqfhrf}
C_{{\bf s},f}=
\diag\Bigg(\frac{f^{n-1}}{s_i^{n}-f^n}\Bigg)_{i=1}^{n}V_{\bf s}\diag(f^{1-i})_{i=1}^{n}\Omega^H\diag(\omega_n^{1-i})_{i=1}^{n}.
\end{equation}
If in addition ${\bf s}=(e\omega_n^{i-1})_{i=1}^{n}$,
then $s_i^n=e^n$ for all $i$ and $V_{\bf s}=V_f$.
Substitute into (\ref{eqfhrf}) and obtain

\begin{equation}\label{eqfhref}
C_{e,f}=n\frac{f^{n-1}}{e^n-f^{n}}\Omega\diag((e/f)^{i-1})_{i=1}^{n}\Omega^H\diag(\omega^{1-i})_{i=1}^{n}.
\end{equation}

\begin{definition}\label{defcvdft} 
Hereafter we refer to 
the matrices 
$V_f$, $C_{{\bf s},f}$, $C_{e,{\bf t}}$, and $C_{e,f}$
for all scalars $e$ and $f$
as {\em FV, FC, CF}, and {\em FCF matrices}, respectively. We refer  to
the matrices 
$C_{{\bf s},f}$ and $C_{e,{\bf t}}$ as
{\em CV matrices}
and to 
 the FV matrices $V_f$ and the FCF matrices $C_{e,f}$
as
the {\em DFT-based} matrices.
\end{definition}
Equations (\ref{eqvf}) and (\ref{eqfhref}) link 
the
DFT-based matrices  to the DFT matrix $\Omega$.
Similarly to
this matrix
they have
their basic sets of knots
$\mathbb S=\{s_1,\dots,s_n\}$
and
$\mathbb T=\{t_1,\dots,t_n\}$
equally spaced on the unit circle $\{z:~|z|=1\}$.
Equations (\ref{eqfhrf}) link the
 CV matrices
to Vandermonde matrices $V_{\bf s}$
and  $V_{\bf t}$, respectively.
Combine  equation (\ref{eqvf}) and (\ref{eqfhref}) with Theorem \ref{thdft}
 to obtain the following results.

\begin{theorem}\label{thcvdft} 
$O(n\log n)$  flops are sufficient to compute the product $M{\bf f}$
of a DFT-based Van\-der\-monde or Cauchy $n\times n$ matrix $M$ and a vector
${\bf f}$. If the matrix $M$ is nonsingular,
then
$O(n\log n)$  flops are also sufficient to compute the
solution ${\bf x}$ to a linear system of $n$ equations $M{\bf x}={\bf f}$.
\end{theorem}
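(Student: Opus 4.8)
The plan is to read both claims directly off the explicit factorizations (\ref{eqvf}) and (\ref{eqfhref}) together with the $O(n\log n)$ FFT bound of Theorem \ref{thdft}. For the product $M\mathbf f$ I would write $M$ as a product of a constant number of factors, each of which is a diagonal matrix, a nonzero scalar, or a copy of $\Omega$ or $\Omega^H$: namely $M=V_f=\Omega\,\diag(f^{j-1})_{j=1}^n$ by (\ref{eqvf}), and $M=C_{e,f}=n\frac{f^{n-1}}{e^n-f^n}\,\Omega\,\diag((e/f)^{i-1})_{i=1}^n\,\Omega^H\,\diag(\omega_n^{1-i})_{i=1}^n$ by (\ref{eqfhref}). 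Then I would multiply $\mathbf f$ by these factors one at a time, from right to left: multiplication of a vector by a diagonal matrix or by a scalar costs $O(n)$ flops, multiplication by $\Omega$ or by $\Omega^{-1}$ costs $O(n\log n)$ flops by Theorem \ref{thdft}, and multiplication by $\Omega^H$ costs the same because $\Omega^H=n\Omega^{-1}$ (from $\Omega\Omega^H=nI$). Summing a bounded number of such costs yields the claimed $O(n\log n)$ bound.

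The one routine point I would spell out is the cost of forming the diagonal factors: their entries $f^{j-1}$, $(e/f)^{i-1}$, and $\omega_n^{1-i}$ are consecutive powers of a fixed number and can be tabulated in $O(n)$ flops by repeated multiplication, so this does not affect the bound.

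For the linear system $M\mathbf x=\mathbf f$ with $M$ nonsingular I would invert the factorization. If $M=V_f$ is nonsingular, then $f\neq0$ (equivalently the knots $f\omega_n^{i-1}$ are distinct, by Theorem \ref{thcv}(i)), hence $V_f^{-1}=\frac1n\,\diag(f^{1-j})_{j=1}^n\,\Omega^H$ and $\mathbf x=V_f^{-1}\mathbf f$ is computed in $O(n\log n)$ flops exactly as above. If $M=C_{e,f}$ is nonsingular, then by Theorem \ref{thcv}(i) applied to the knot sets $\{e\omega_n^{i-1}\}$ and $\{f\omega_n^{j-1}\}$ one gets $e\neq0$, $f\neq0$, and $e^n\neq f^n$; therefore every factor on the right-hand side of (\ref{eqfhref}) is invertible, and $C_{e,f}^{-1}$ is the product, taken in reverse order, of diagonal matrices, $(\Omega^H)^{-1}=\frac1n\Omega$, $\Omega^{-1}=\frac1n\Omega^H$, and a nonzero scalar. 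Multiplying $\mathbf f$ by this product again costs $O(n\log n)$ flops.

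I do not expect a real obstacle here: all of the content sits in the earlier identities (\ref{eqvf})--(\ref{eqfhref}) and in the FFT bound of Theorem \ref{thdft}. The two places that deserve a moment's care are the reduction of $\Omega^H$-by-vector (and $(\Omega^H)^{-1}$-by-vector) products to $\Omega^{-1}$-by-vector products, so as to stay within the scope of Theorem \ref{thdft}, and the observation that nonsingularity of $M$ is equivalent to nonvanishing of each scalar and each diagonal factor in its factorization, which is precisely what licenses inverting the factorization term by term.
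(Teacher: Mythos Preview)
Your proposal is correct and is exactly the argument the paper intends: immediately before the theorem the paper simply says to combine equations (\ref{eqvf}) and (\ref{eqfhref}) with Theorem \ref{thdft}, and your write-up spells out precisely that combination (including the $O(n)$ cost of forming the diagonal factors and the inversion of each factor under the nonsingularity hypothesis). There is nothing to add.
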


\noindent{\bf $f$-circulant matrices.}
$Z_f=\begin{pmatrix} {\bf 0}^T & f\\ I_{n-1} & {\bf 0}\end{pmatrix}$
is the  $n\times n$ matrix of $f$-circular shift
for a scalar $f$,
\begin{equation}\label{eqrever}
JZ_fJ=Z_{f}^T,~~
JZ_f^TJ=Z_{f} 
\end{equation}
for any pairs of scalars $e$ and $f$, and if $f\neq 0$, then
\begin{equation}\label{eqinv}
Z_f^{-1}=Z_{1/f^T}.
\end{equation}

$Z_f({\bf v})=\sum_{i=1}^{n}v_{i}Z_f^{i-1}$
is an  $f$-circulant matrix,
called circulant for $f=1$.
It is a Toeplitz matrix defined by its first column ${\bf v}=(v_i)_{i=1}^n$
and by a scalar $f\neq 0$. It can be called a {\em DFT-based Toeplitz matrix}
in view of the following results.

\begin{theorem}\label{thcpw} (See \cite{CPW74}.)
We have 
$Z_1({\bf v})=\Omega^{-1}D(\Omega{\bf v})\Omega.$
More generally, for any $f\ne 0$, we have
$Z_{f^n}({\bf v})=V_f^{-1}D(V_f{\bf v})V_f$
where
$\Omega=(\omega_n^{ij})_{i,j=0}^{n-1}$ is the $n\times n$ matrix of DFT,
$D({\bf u})=\diag(u_i)_{i=0}^{n-1}$ for a vector ${\bf u}=(u_i)_{i=0}^{n-1}$  
and the matrix $V_f=\Omega\diag(f^{i})_{i=0}^{n-1}$ of (\ref{eqvf}). 
\end{theorem}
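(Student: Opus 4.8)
The plan is to diagonalize the basic shift matrix $Z_f$ first and then lift the result to an arbitrary polynomial in $Z_f$. I would begin with the circulant case $f=1$: the matrix $Z_1$ is the cyclic permutation, and its eigenvectors are exactly the columns of $\Omega^{-1}=\frac 1n\Omega^H$, with $Z_1\Omega^{-1}=\Omega^{-1}\diag(\omega_n^{-j})_{j=0}^{n-1}$, which one checks entrywise from $\omega_n^n=1$. Equivalently $\Omega Z_1\Omega^{-1}=\diag(\omega_n^{-j})_{j=0}^{n-1}$ (or $\Omega^{-1}Z_1\Omega$ equals the corresponding diagonal of powers of $\omega_n$, depending on sign conventions); I would fix the sign convention so that it is consistent with $D(\Omega\mathbf v)$ as claimed, i.e. so that the diagonal entries of $\Omega Z_1^{\,k}\Omega^{-1}$ are the $k$th powers of the first DFT sample slots. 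Because the DFT of the first coordinate vector $\mathbf e_1$ is the all-ones vector and $Z_1(\mathbf v)=\sum_{i=1}^n v_i Z_1^{i-1}$ is a polynomial in $Z_1$, conjugating term by term gives $\Omega Z_1(\mathbf v)\Omega^{-1}=\sum_{i=1}^n v_i \big(\Omega Z_1\Omega^{-1}\big)^{i-1}=\diag\!\big(\sum_{i=1}^n v_i\,\omega_n^{\pm(i-1)j}\big)_{j}=D(\Omega\mathbf v)$, since the $j$th entry of $\Omega\mathbf v$ is precisely $\sum_i v_i\omega_n^{(i-1)j}$. Rearranging yields $Z_1(\mathbf v)=\Omega^{-1}D(\Omega\mathbf v)\Omega$.

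For the general case I would use the similarity $Z_{f^n}=\diag(f^{i})_{i=0}^{n-1}\,Z_1\,\diag(f^{-i})_{i=0}^{n-1}$, which one verifies directly from the definition of $Z_f$ (the single nonzero super-corner entry picks up the factor $f^n$, each subdiagonal entry is unchanged). Writing $\Delta=\diag(f^{i})_{i=0}^{n-1}$, we then have $V_f=\Omega\Delta$ by equation (\ref{eqvf}), hence $V_f^{-1}=\Delta^{-1}\Omega^{-1}$, and $Z_{f^n}=\Delta Z_1\Delta^{-1}$ conjugates the $f=1$ identity into $Z_{f^n}(\mathbf v)=\Delta\,\Omega^{-1}D(\Omega\mathbf w)\Omega\,\Delta^{-1}$ where $\mathbf w$ is whatever vector makes $Z_1(\mathbf w)$ have first column conjugating to the first column of $Z_{f^n}(\mathbf v)$. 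Tracking the first column shows $Z_{f^n}(\mathbf v)$ has first column $\mathbf v$ (its $f^n$-circulant structure is exactly this), and the conjugation by $\Delta$ sends the first column $\mathbf v$ to $\Delta^{-1}\mathbf v$ scaled appropriately; carrying this through, $D(\Omega\mathbf w)$ becomes $D(\Omega\Delta\mathbf v)=D(V_f\mathbf v)$, giving $Z_{f^n}(\mathbf v)=\Delta^{-1}\Omega^{-1}D(V_f\mathbf v)\Omega\Delta=V_f^{-1}D(V_f\mathbf v)V_f$, as required. (Alternatively, and perhaps cleaner, one observes directly that $Z_{f^n}(\mathbf v)$ is a polynomial $q(Z_{f^n})$ with $q$ the polynomial whose coefficient vector is $\mathbf v$, that $Z_{f^n}$ has eigenvector matrix $V_f^{-1}$ with eigenvalues read off the diagonal, and that $V_f\mathbf v$ is exactly the vector of values of $q$ at those eigenvalues.)

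The main obstacle is bookkeeping rather than conceptual: pinning down the exact sign/ordering convention for $\Omega$ versus $\Omega^{-1}$ so that the diagonal genuinely reads $D(\Omega\mathbf v)$ and not $D(\Omega^{-1}\mathbf v)$ or a permuted variant, and making sure the similarity $Z_{f^n}=\Delta Z_1\Delta^{-1}$ uses the powers $f^0,\dots,f^{n-1}$ in the order matching equation (\ref{eqvf}). I would verify the $2\times 2$ or $3\times 3$ case explicitly to lock the conventions, then present the argument as: (1) diagonalize $Z_1$, (2) note $Z_1(\mathbf v)$ is a polynomial in $Z_1$ and conjugate term-by-term, using that the $j$th DFT coefficient of $\mathbf v$ is $\sum_i v_i\omega_n^{(i-1)j}$, and (3) transport everything through the diagonal similarity $\Delta$ to pass from $\Omega$ to $V_f$ and from $Z_1$ to $Z_{f^n}$. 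This is exactly the classical Cooley–Tukey-type diagonalization of circulants from \cite{CPW74}, so I would keep the write-up brief and cite it for the details of the FFT-stable implementation.
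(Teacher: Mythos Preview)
The paper does not actually prove this theorem; it simply states the result and defers to \cite{CPW74}. Your overall strategy---diagonalize the shift matrix, then lift to polynomials in the shift---is the standard argument and is correct in outline.

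However, your stated similarity $Z_{f^n}=\Delta\,Z_1\,\Delta^{-1}$ with $\Delta=\diag(f^{i})_{i=0}^{n-1}$ is wrong in both direction and scale: conjugating $Z_1$ by $\Delta$ on that side places $f$, not $1$, on every subdiagonal entry. The correct identity is $Z_{f^n}=f\,\Delta^{-1}Z_1\Delta$, which you can check entrywise (subdiagonal $i=j+1$: $f\cdot f^{-(j+1)}\cdot 1\cdot f^{j}=1$; corner $i=0,\,j=n-1$: $f\cdot f^{0}\cdot 1\cdot f^{n-1}=f^{n}$). With this correction the rest goes through cleanly: $Z_{f^n}^{\,i-1}=f^{\,i-1}\Delta^{-1}Z_1^{\,i-1}\Delta$, hence $Z_{f^n}({\bf v})=\Delta^{-1}Z_1(\Delta{\bf v})\,\Delta=\Delta^{-1}\Omega^{-1}D(\Omega\Delta{\bf v})\,\Omega\Delta=V_f^{-1}D(V_f{\bf v})V_f$. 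Your ``alternative, cleaner'' route---observing that the rows of $V_f$ are left eigenvectors of $Z_{f^n}$ with eigenvalues $f\omega_n^{i-1}$, so that $V_fZ_{f^n}=\diag(f\omega_n^{i-1})_iV_f$ and $V_f{\bf v}$ is exactly the vector of values of the polynomial $\sum_i v_ix^{i-1}$ at those eigenvalues---avoids this bookkeeping entirely and is the version I would recommend writing up.
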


\bigskip

{\bf The complexity of computations with Toep\-litz, Hankel,
Cauchy and Van\-der\-monde  matrices.}
Theorems \ref{thdft} and \ref{thcpw} combined support
numerically stable  computation of the product 
by a vector
of an
$f$-circulant matrix 
$Z_e({\bf u})$ (as well as of its inverse if the matrix is nonsingular)
by using $O(n\log n)$ flops.
We can extend this cost bound to multiplication of
 a Toep\-litz matrix $T$ of Table \ref{t1} 
 by a vector,
by embedding the matrix
into a $2^k\times 2^k$ circulant matrix
for  $k=\lceil \log_2(2n-1)\rceil$.
Each of pre- and post-multiplication by the matrix $J$, that is the
cyclic interchange of rows or columns, transforms a Toep\-litz matrix into
a Hankel matrix and vice versa,  
and therefore transforms accordingly the algorithms for 
matrix inversion and solving a linear systems of equations.

Numerically unstable algorithms using nearly linear number of flops
(namely $O(n\log^2n)$ flops) are known for multiplying
 general Van\-der\-monde and Cauchy $n\times n$ matrices 
by a vector and solving Toeplitz, Hankel,  
Van\-der\-monde and Cauchy
nonsingular linear systems of $n$ equations (cf. \cite[Chapter 2 and 3]{P01}).
Numerically stable  known  algorithms for all
these problems run in quadratic arithmetic time,
except that 
numerically stable  algorithms of
\cite{MRT05}, \cite{CGS07}  and \cite{XXG12}
approximate the solution of 
nonsingular Toeplitz linear systems
in nearly linear time.
We seek extension of the latter algorithms to
 Van\-der\-monde and Cauchy 
computations.

\section{The structures of Toeplitz, 
Hankel, Van\-der\-monde and Cauchy
types. Displacement ranks and generators}\label{sfourd}

We generalize
 the four classes of
matrices of Table \ref{t1}
by employing
the Sylvester displacements
$AM-MB$ where 
the pair of {\em operator matrices} $A$ and $B$
is associated with a fixed matrix structure.  
(See \cite[Theorem 1.3.1]{P01} on a simple link to the Stein displacements  
$M-AMB$.)
The rank, the $\tau$-rank, 
generators, and $\tau$-generators of the displacement
of a matrix $M$ (for a fixed  operator matrices $A$ and $B$
and  tolerance $\tau$)
are called {\em displacement rank} (denoted $d_{A,B}(M)$),
$\tau$-{\em displacement rank},
  {\em displacement generator},
 and  $\tau$-{\em displacement generator}
of the matrix $M$,
respectively (cf. \cite{KKM79}), \cite{P01}, \cite{BM01}).
In our Theorems \ref{thdsplr} and \ref{thdexpr}  we write 
$(t), ~(h),~ (th),~ (v), ~(v^T)$, and $(c)$ to indicate the matrix structures of
Toeplitz, 
Hankel, 
Toeplitz or 
Hankel,
 Van\-der\-monde, transposed Van\-der\-monde, and Cauchy
types, respectively,
which we define next.

\begin{definition}\label{def4t}
If the displacement rank of a matrix is small 
(in context) for a pair of 
operator matrices 
associated with 
Toeplitz, Hankel, Van\-der\-monde, transpose of  Van\-der\-monde or Cauchy matrices
in Theorem \ref{thdsplr}
below,
then the matrix is said to have the {\em structure
of Toeplitz, Hankel,  Van\-der\-monde,  transposed Van\-der\-monde or Cauchy type},
respectively. Hereafter 
 $\mathcal T$, 
$\mathcal H$,
$\mathcal V$,
$\mathcal V^T$,
 and
$\mathcal C$ denote the five classes of these
 matrices (cf. Table \ref{taboperm}).
The classes $\mathcal V$,
$\mathcal V^T$,
 and
$\mathcal C$ consist of distinct subclasses 
$\mathcal V_{\bf s}$,
$\mathcal V^T_{\bf s}$,
 and
$\mathcal C_{\bf s,t}$
defined by the vectors ${\bf s}$
and  ${\bf t}$ and the operator matrices
$D_{\bf s}$ and
$D_{\bf t}$, respectively,
or equivalently by the  bases $V_{\bf s}$
and
$C_{\bf s,t}$ of these subclasses.
To simplify the notation
we will sometimes drop the subscripts
${\bf s}$ and ${\bf t}$
where they are not important 
or are defined by context.
\end{definition}

\begin{definition}\label{def4tf} (Cf. Definition \ref{defcvdft}.)
 In the case where the vectors  ${\bf s}$ 
and  ${\bf t}$ turn into the vectors 
$e(\omega_n^{i-1})_{i=1}^n$ and
$f(\omega_n^{i-1})_{i=1}^n$
for some scalars $e$ and $f$, we 
define the matrix classes
$\mathcal {FV}=
\cup_e\mathcal V_e$,
$\mathcal {FC}=
\cup_f\mathcal C_{{\bf s},f}$,
$\mathcal {CF}=
\cup_e\mathcal C_{e,{\bf t}}$,
and $\mathcal {FCF}
=\cup_{e,f}\mathcal C_{e,f}$
where the unions are over all complex scalars $e$ and $f$.
These matrix classes extend
 the classes of FV, FC, CF, and FCF matrices, 
respectively. We also define the classes $\mathcal {CV}$
(extending the CV matrices) 
and
$\mathcal {V^TF}=\cup_e\mathcal V_e^T$.
We say that they consist of  $FV$-like,
$V^TF$-like, $FC$-like, $CF$-like, $FCF$-like, and $CV$-like
 matrices, which have structures
of $\mathcal {FV}$-type, $\mathcal {V^TF}$-type,
$\mathcal {FC}$-type, $\mathcal {CF}$-type,
 $\mathcal {FCF}$-type, and  $\mathcal {CV}$-type,
respectively.
\end{definition}

One can readily verify the following results.
\begin{theorem}\label{thdsplr} {\em Displacements of basic structured matrices.}

(th) For a pair of scalars $e$ and $f$ and
 two matrices $T$ (Toeplitz)
and $H$ (Hankel) of Table \ref{t1},
 the following displacements have ranks at most $2$
(see some expressions for the shortest displacement 
generators in \cite[Section 4.2]{P01}),
$$Z_eT-TZ_f,~
Z_e^TT-TZ_f^T,~Z_e^TH-HZ_f~{\rm and}~Z_eH-HZ_f^T.$$ 

(v) For a scalar $e$ and a
Vandermonde matrix $V$ of Table \ref{t1} we have 
\begin{equation}\label{eqvand}
VZ_e=D_{\bf s}V-(s_i^n-e)_{i=1}^n{\bf e}_n^T,
\end{equation}
\begin{equation}\label{eqvandt}
Z_e^TV^T=V^TD_{\bf s}-{\bf e}_n((s_i^n-e)_{i=1}^n)^T.
\end{equation}
Consequently
the displacements 
$D_{\bf s}V-VZ_{e}$ and $Z_{e}^TV^T-V^TD_{\bf s}$
have rank at most $1$ and
vanish if
\begin{equation}\label{eqrz}
s_i^n=e~{\rm for}~i=1,\dots,n.
\end{equation}

\medskip

(c) For 
 two vectors 
${\bf s}=(s_i)_{i=1}^n$ and
${\bf t}=(t_i)_{i=1}^n$ having $2n$
distinct components, 
 a 
 Cauchy matrix $C$ of Table \ref{t1},
and the vector ${\bf e}=(1,\dots,1)^T$  of dimension $n$, filled with ones,
we have
\begin{equation}\label{eqcch}
D_{\bf s}C-CD_{\bf t}={\bf e}{\bf e}^T,
~~\rank(D_{\bf s}C-CD_{\bf t})=1.
\end{equation}
\end{theorem}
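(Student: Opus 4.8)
The plan is to verify the three parts by direct computation, using the reversal identities (\ref{eqrever}) to reduce the four displacement identities in part (th) to a single one, and treating parts (v) and (c) as short one- and two-line verifications. Part (c) is immediate: for a Cauchy matrix $C=(1/(s_i-t_j))_{i,j=1}^n$ the $(i,j)$ entry of $D_{\bf s}C-CD_{\bf t}$ equals $\frac{s_i}{s_i-t_j}-\frac{t_j}{s_i-t_j}=\frac{s_i-t_j}{s_i-t_j}=1$ for all $i,j$, so $D_{\bf s}C-CD_{\bf t}={\bf e}{\bf e}^T$, and this matrix has rank exactly $1$ because ${\bf e}\ne{\bf 0}$; this is (\ref{eqcch}).

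For part (v) I would read off the action of right multiplication by $Z_e$: from the block form of $Z_e$, the matrix $MZ_e$ shifts the columns of $M$ one step to the left and puts $e$ times the first column of $M$ into the last column. Applying this with $M=V$, whose $j$th column is $(s_i^{j-1})_{i=1}^n$, the $j$th column of $VZ_e$ is $(s_i^{j})_{i=1}^n$ for $j<n$ and $(e)_{i=1}^n$ for $j=n$, whereas every column of $D_{\bf s}V$ is $(s_i^{j})_{i=1}^n$. Subtracting, all columns cancel except the last, giving $D_{\bf s}V-VZ_e=(s_i^n-e)_{i=1}^n{\bf e}_n^T$, which rearranges to (\ref{eqvand}); its right-hand side is an outer product, hence of rank at most $1$, and it vanishes exactly when its only possibly nonzero column does, i.e. under (\ref{eqrz}). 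Transposing (\ref{eqvand}) and using $D_{\bf s}^T=D_{\bf s}$ yields (\ref{eqvandt}) together with the companion rank and vanishing statements.

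For part (th) I would first carry out the single computation of $Z_eT-TZ_f$. Here $Z_eT$ shifts the rows of $T$ down by one (placing $e$ times the last row into the first row) and $TZ_f$ shifts the columns of $T$ left by one (placing $f$ times the first column into the last column); since $T_{i,j}=t_{i-j}$, the corresponding entries of $Z_eT$ and $TZ_f$ both equal $t_{i-j-1}$ whenever $2\le i\le n$ and $1\le j\le n-1$. Hence $Z_eT-TZ_f$ is supported on the first row and the last column, so it is a sum of two rank-one matrices and $\rank(Z_eT-TZ_f)\le 2$. The remaining three identities then follow with no new computation: conjugating by $J$ and using (\ref{eqrever}), together with the facts that $JTJ$ and $T^T$ are again Toeplitz and that $JH$ and $HJ$ are Toeplitz for a Hankel $H$, rewrites each of $Z_e^TT-TZ_f^T$, $Z_e^TH-HZ_f$ and $Z_eH-HZ_f^T$ as $Z_e(\cdot)-(\cdot)Z_f$ applied to a Toeplitz matrix, up to left or right multiplication by the invertible matrix $J$, which leaves the rank unchanged.

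There is no genuine obstacle: the whole statement is elementary, which is why the paper calls it "readily verified." The only place to take care is the index bookkeeping in the Toeplitz computation — the cyclic wrap-around entries controlled by the scalars $e$ and $f$ are precisely what populate the exceptional first row and last column — and making sure the $J$-conjugation identities pair up the operator matrices on the two sides correctly.
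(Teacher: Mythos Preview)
Your proposal is correct, and in fact the paper offers no proof at all for this theorem: it is introduced with the line ``One can readily verify the following results,'' and the body of the theorem merely records the displacement identities. Your entry-by-entry verifications for parts (v) and (c), and your reduction of the four Toeplitz/Hankel cases in part (th) to the single computation of $Z_eT-TZ_f$ via the reversal identity $JZ_eJ=Z_e^T$ of (\ref{eqrever}), are exactly the sort of routine check the paper is leaving to the reader.
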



The 
following theorem shows that
variation of the scalars
$e$ and $f$, defining 
the operator matrices $Z_e$ and $Z_f$,
 makes negligible impact on the matrix  structure,
and so the classes  $\mathcal T$, 
$\mathcal H$,
$\mathcal V$, and
$\mathcal V^T$ 
do not depend on the choice of these scalars. 

\begin{theorem}\label{thzef} 
For two scalars $e$ and $f$
and five matrices $A$, $B$, $C$, $D$, and $M$ we have
$d_{C,D}(M)-d_{A,B}(M)\le 1$ 
where either $A=C$, $B=Z_e$, $D=Z_f$ or $A=C$, $B=Z_e^T$, $D=Z_f^T$
and similarly where either 
 $B=D$, $A=Z_e$, $C=Z_f$ or $B=D$, $A=Z_e^T$, $C=Z_f^T$.
\end{theorem}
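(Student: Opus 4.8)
The plan is to reduce all four cases to a single elementary observation: the shift matrices $Z_e$ and $Z_f$ (respectively $Z_e^T$ and $Z_f^T$) differ by a matrix of rank at most one, after which the subadditivity of matrix rank, $\rank(X+Y)\le\rank X+\rank Y$, does everything.

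First I would record the rank-one identity. From the displayed block form $Z_f=\begin{pmatrix}{\bf 0}^T & f\\ I_{n-1}&{\bf 0}\end{pmatrix}$, the only entry in which $Z_f$ and $Z_e$ disagree is the $(1,n)$ entry, so $Z_f-Z_e=(f-e)\,{\bf e}_1{\bf e}_n^T$, and therefore $Z_f^T-Z_e^T=(f-e)\,{\bf e}_n{\bf e}_1^T$. In either case this is a matrix of rank at most $1$ (and exactly $1$ unless $e=f$).

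Next, for the two cases with a common left operator ($A=C$), write $(B,D)\in\{(Z_e,Z_f),(Z_e^T,Z_f^T)\}$, so that $\rank(D-B)\le1$ by the previous step. Then
\[
CM-MD=(CM-MB)-M(D-B),
\]
and since $\rank\bigl(M(D-B)\bigr)\le\rank(D-B)\le1$, subadditivity of rank yields
\[
d_{C,D}(M)=\rank(CM-MD)\le\rank(CM-MB)+1=d_{A,B}(M)+1.
\]
For the two cases with a common right operator ($B=D$), the same argument applied on the other side — writing $(A,C)\in\{(Z_e,Z_f),(Z_e^T,Z_f^T)\}$, so $\rank(C-A)\le1$, and $CM-MD=(AM-MB)+(C-A)M$ — gives $d_{C,D}(M)\le d_{A,B}(M)+1$ as well.

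Finally I would remark that each statement is symmetric under interchanging $e$ and $f$, so in fact $|d_{C,D}(M)-d_{A,B}(M)|\le1$ in every case; in particular, up to this additive $1$ the classes $\mathcal T$, $\mathcal H$, $\mathcal V$, $\mathcal V^T$ do not depend on the scalars defining the operator matrices $Z_e$, $Z_f$. I do not expect any real obstacle here: the only point requiring (routine) care is verifying the rank-one identity for $Z_f-Z_e$ from the block form and then invoking subadditivity of rank uniformly across the four listed cases.
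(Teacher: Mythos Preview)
Your proposal is correct and follows essentially the same route as the paper: both arguments rest on the rank-one identity $Z_f-Z_e=(f-e)\,{\bf e}_1{\bf e}_n^T$ and then apply subadditivity of rank to the difference of the two displacements. You in fact spell out more detail than the paper does (handling the transposed cases and the symmetry remark explicitly), but the underlying idea is identical.
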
 
\begin{proof} 
The matrix $Z_b-Z_c=(b-c){\bf e}_1{\bf e}_n^T$ 
has rank at most $1$
for any pair of scalars $b$ and $c$.
Therefore the matrices 
$(Z_bM-MB)-(Z_cM-MB)=Z_bM-Z_cM=(Z_b-Z_c)M$ and 
$(AZ-MZ_b)-(AZ-MZ_c)=-M(Z_b-Z_c)$
have ranks at most $1$.
\end{proof}

Table \ref{taboperm} displays the pairs of operator matrices
associated with the matrices of the seven classes $\mathcal T$, 
$\mathcal H$,
$\mathcal V_{\bf s}$,
$\mathcal V^{-1}_{\bf s}$,
$\mathcal V^{T}_{\bf s}$,
$\mathcal V^{-T}_{\bf s}$,
and
$\mathcal C_{\bf s,t}$.
Five of these classes  are employed in Theorems \ref{thdsplr} and \ref{thdexpr}.
 $\mathcal V^{-1}_{\bf s}$
and
$\mathcal V^{-T}_{\bf s}$
denote 
the 
classes of 
 the inverses and the 
 transposed
inverses 
of the matrices
of the class $\mathcal V_{\bf s}$, respectively.
We obtain 
the pairs of their associated operator matrices by interchanging the matrices in the
pairs of the operator matrices for 
the classes $\mathcal V_{\bf s}$ and $\mathcal V^T_{\bf s}$,
respectively (see equation (\ref{eqinvm}) of the next section). 

The following theorem  
 expresses the $n^2$ entries of an $n\times n$ matrix $M$
 through the $2dn$ entries of its displacement 
generator $(F,G)$ defined
under the operator matrices of Theorem \ref{thdsplr}
and Table \ref{taboperm}.
See some of these and other expressions  
for various classes  of structured matrices 
through their generators
in 
\cite{GO94}, \cite[Sections 4.4 and 4.5]{P01}, and 
 \cite{PW03}.

\begin{theorem}\label{thdexpr}
Suppose $s_1,\dots,s_n,t_1,\dots,t_n$ are  $2n$
distinct scalars, 
${\bf s}=(s_k)_{k=1}^n$, ${\bf t}=(t_k)_{k=1}^n$,
$V=(s_i^{k-1})_{i,k=1}^n$,   $C=(\frac{1}{s_i-t_k})_{i,k=1}^n$,
$e$ and $f$ are two distinct scalars,
${\bf f}_1, \dots, {\bf f}_d,{\bf g}_1, \dots, {\bf g}_d$ are $2d$ vectors of dimension $n$,
 ${\bf u}_1, \dots, {\bf u}_n,{\bf v}_1, \dots, {\bf v}_n$ are $2n$ vectors of dimension $d$,
and $F$ and $G$ are $n\times d$ matrices such that  
$F=\begin{pmatrix}{\bf u}_1\\ \vdots\\ {\bf u}_n\end{pmatrix}=({\bf f}_1~|~\cdots~|~{\bf f}_d),~~
G=\begin{pmatrix}{\bf v}_1\\ \vdots\\ {\bf v}_n\end{pmatrix}=({\bf g}_1~|~\cdots~|~{\bf g}_d)$.
 Then 

(t) $(e-f)M=\sum_{j=1}^dZ_e({\bf f}_j)Z_f(J{\bf g}_j)$
if $Z_eM-MZ_f=FG^T$, $e\neq f$;

$(e-f)M=\sum_{j=1}^dZ_e(J{\bf f}_j)^TZ_f({\bf g}_j)^T=J\sum_{j=1}^dZ_e(J{\bf f}_j)Z_f({\bf g}_j)J$
if $Z_e^TM-MZ_f^T=FG^T$, $e\neq f$,

(h) $(e-f)M=\sum_{j=1}^dZ_e({\bf f}_j)Z_f({\bf g}_j)J$
if $Z_eM-MZ_f^T=FG^T$,  $e\neq f$; 

$(e-f)M=J\sum_{j=1}^dZ_e(J{\bf f}_j)Z_f(J{\bf g}_j)^T$
if $Z_e^TM-MZ_f=FG^T$,  $e\neq f$,

(v) $M=
\diag(\frac{1}{s_i^n-e})_{i=1}^n\sum_{j=1}^d\diag({\bf f}_j)VZ_{e}(J{\bf g}_j)$
if  
$D_{\bf s}M-MZ_{e}=FG^T$
and if
$s_i^n\neq e$ for $i=1,\dots,n$;  

(v$^T$) $M=\diag(\frac{1}{e-s_i^n})_{i=1}^n\sum_{j=1}^dZ_{e}(J{\bf f}_j)^TV^T\diag({\bf g}_j)$
if  
$Z_{e}^TM-MD_{\bf s}=FG^T$ 
 and if $s_i^n\neq e$ for $i=1,\dots,n$;


(c) $M=\sum_{j=1}^d\diag({\bf f}_j)C\diag({\bf g}_j)=\left(\frac{{\bf u}_i^T{\bf v}_j}{s_i-t_j}\right)_{i,j=0}^{n-1}$
if
$D_{\bf s}M-MD_{\bf t}=FG^T$. 
\end{theorem}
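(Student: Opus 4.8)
The plan is to use the invertibility of each of the six displacement operators involved. The Sylvester operator $M\mapsto AM-MB$ is a bijection on $n\times n$ matrices precisely when $A$ and $B$ have disjoint spectra, and the hypotheses of the theorem are exactly the instances of this criterion for the relevant operator matrices: for $(t)$ and $(h)$ the operator matrices are $Z_e,Z_f$ or their transposes, whose eigenvalues are the $n$th roots of $e$ and of $f$, disjoint iff $e\ne f$; for $(v)$ and $(v^T)$ they are $D_{\bf s}$ and $Z_e$ in some order, with eigenvalues $s_1,\dots,s_n$ and the $n$th roots of $e$, disjoint iff $s_i^n\ne e$ for all $i$; for $(c)$ they are $D_{\bf s}$ and $D_{\bf t}$, disjoint because $s_1,\dots,s_n,t_1,\dots,t_n$ are distinct. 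Hence in each case it suffices to verify that the matrix displayed on the right-hand side has the prescribed displacement $FG^T$; uniqueness then identifies it with $M$. Since both the displacement operator and every right-hand side are linear in the generator, I may take the generator to have length $d=1$, say $F={\bf f}$, $G={\bf g}$, and sum over $j$ at the end.

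The verifications run through the product (Leibniz) rule $A(M_1M_2)-(M_1M_2)B=(AM_1-M_1C)M_2+M_1(CM_2-M_2B)$ together with a short list of basic facts: the $f$-circulant $Z_f({\bf v})$ is a polynomial in $Z_f$ (so it commutes with $Z_f$), $Z_f({\bf v}){\bf e}_1={\bf v}$, ${\bf e}_n^TZ_f({\bf v})=(J{\bf v})^T$ (the last row of an $f$-circulant is the reversed first column), $Z_f({\bf v})^T=JZ_f({\bf v})J$, and $JZ_fJ=Z_f^T$, $Z_f^TJ=JZ_f$ from (\ref{eqrever}). Combining these, $Z_e-Z_f=(e-f){\bf e}_1{\bf e}_n^T$ gives the key rank-one identity $Z_e(Z_e({\bf u})Z_f({\bf w}))-(Z_e({\bf u})Z_f({\bf w}))Z_f=(e-f){\bf u}(J{\bf w})^T$. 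For the Vandermonde cases I use (\ref{eqvand}) and (\ref{eqvandt}), i.e.\ $D_{\bf s}V-VZ_e=(s_i^n-e)_{i=1}^n{\bf e}_n^T$, and for the Cauchy case I use (\ref{eqcch}), $D_{\bf s}C-CD_{\bf t}={\bf e}{\bf e}^T$.

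Now the cases. For $(c)$ one reads off entries: $(D_{\bf s}M-MD_{\bf t})_{i,j}=(s_i-t_j)m_{i,j}$, so $D_{\bf s}M-MD_{\bf t}=FG^T$ forces $m_{i,j}={\bf u}_i^T{\bf v}_j/(s_i-t_j)$, and regrouping over the columns of $F,G$ rewrites this as $\sum_j\diag({\bf f}_j)C\diag({\bf g}_j)$. For $(v)$, put $M=\diag(1/a_i)\diag({\bf f})\,V\,Z_e(J{\bf g})$ with $a_i=s_i^n-e$; since diagonal matrices commute and $Z_e(J{\bf g})$ commutes with $Z_e$, $D_{\bf s}M-MZ_e=\diag(1/a_i)\diag({\bf f})(D_{\bf s}V-VZ_e)Z_e(J{\bf g})=\diag(1/a_i)\diag({\bf f})(a_i)_i{\bf e}_n^TZ_e(J{\bf g})={\bf f}\,{\bf g}^T$, using ${\bf e}_n^TZ_e(J{\bf g})=(J\cdot J{\bf g})^T={\bf g}^T$. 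Case $(v^T)$ follows by transposing $(v)$: $Z_e^TM-MD_{\bf s}=FG^T$ is equivalent to $D_{\bf s}M^T-M^TZ_e=-GF^T$, to which $(v)$ applies; transposing the resulting expression for $M^T$ and using $Z_e({\bf v})^T=JZ_e({\bf v})J$ yields the stated form, with the scaling $\diag(1/(e-s_i^n))$ sitting next to $V^T\diag({\bf g}_j)$.

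Finally, $(t)$ and $(h)$. For the first formula of $(t)$, set $M=\frac1{e-f}\sum_jZ_e({\bf f}_j)Z_f(J{\bf g}_j)$; applying the Leibniz rule with $C=Z_f$ and using that $Z_f(J{\bf g}_j)$ commutes with $Z_f$ together with the key identity above, $Z_eM-MZ_f=\frac1{e-f}\sum_j(e-f){\bf f}_j(J\cdot J{\bf g}_j)^T=\sum_j{\bf f}_j{\bf g}_j^T=FG^T$. The remaining three formulas are reduced to this one by conjugating with $J$: for instance $Z_e^TM-MZ_f^T=J\bigl(Z_e(JMJ)-(JMJ)Z_f\bigr)J$, so $JMJ$ solves the plain Toeplitz problem with generator $(JF,JG)$ and one transports back; the Hankel cases are handled the same way after writing $M=NJ$ or $M=JN$ and using $Z_f^TJ=JZ_f$ to turn the mixed displacement $Z_eM-MZ_f^T=FG^T$ (resp.\ $Z_e^TM-MZ_f=FG^T$) into the Toeplitz displacement $Z_eN-NZ_f=F(JG)^T$ (resp.\ $(JF)G^T$). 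No step is analytically hard; the only place where care is genuinely needed is the bookkeeping — keeping straight the reversals $J$ (in particular $J{\bf v}$ versus ${\bf v}$ inside the $f$-circulants), the transposes, and the side on which each diagonal factor acts — so that each displayed formula comes out in exactly the asserted form. The single structural input is the disjoint-spectra criterion that makes each displacement operator invertible.
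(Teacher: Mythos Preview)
Your approach is correct and is in fact somewhat different from the paper's. The paper does not argue via injectivity of the Sylvester operator at all: for parts $(t)$, $(h)$, and $(c)$ it simply cites examples from \cite{P01}, and for part $(v)$ it first converts the Sylvester equation $D_{\bf s}M-MZ_e=FG^T$ into the Stein form $M-D_{\bf s}MZ_{1/e}^T=-F(Z_{1/e}G)^T$ via $Z_eZ_{1/e}^T=I$, then invokes another result from \cite{P01} for that Stein displacement, and finally simplifies using $eZ_{1/e}(Z_{1/e}{\bf g}_j)=Z_e(J{\bf g}_j)^T$. Part $(v^T)$ is then obtained by transposition, exactly as you do. Your route---verify that the displayed right-hand side has displacement $FG^T$ using the Leibniz rule and the basic identities $Z_e({\bf v}){\bf e}_1={\bf v}$, ${\bf e}_n^TZ_f({\bf v})=(J{\bf v})^T$, and (\ref{eqvand}), then invoke the disjoint-spectra criterion for uniqueness---is more self-contained and treats all six cases uniformly, at the price of reproducing circulant calculus that the paper outsources to \cite{P01}. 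The paper's route, on the other hand, makes visible how these Sylvester formulas descend from the Stein formulas already on record.

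One small caution on $(v^T)$: when you transpose the expression you obtain from $(v)$, the scalar factor $\diag(1/(e-s_i^n))$ lands on the \emph{right} of $V^T\diag({\bf g}_j)$, not on the far left as printed in the theorem. The paper's own transposition step lands it there too, so this is a typographical slip in the statement rather than a gap in your argument.
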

\begin{proof}
Parts $(t)$ and $(h)$ are taken from \cite[Examples 4.4.2 and 4.4.4]{P01}.
Part $(c)$ is taken from \cite[Example 1.4.1]{P01}.
To prove part ($v$), combine the equations
$D_{\bf t}M-MZ_{e}=FG^T$
and $Z_eZ_{1/e}^T=I$
(cf. (\ref{eqinv}))  and
deduce 
that 
$M-D_{\bf t}MZ_{1/e}^T=-F(Z_{1/e}G)^T$.
Then  obtain from
 \cite[Example 4.4.6 (part b)] {P01} 
that
$M=
e\diag(\frac{1}{t_i^n-e})_{i=1}^n\sum_{j=1}^n\diag({\bf f}_j)VZ_{1/e}(Z_{1/e}{\bf g}_j)^T$.
Substitute $eZ_{1/e}(Z_{1/e}{\bf g}_j)=Z_e(J{\bf g}_j)^T$ and obtain 
the claimed  expression of part ($v$).
Next  transpose the equation
$Z_{e}^TM-MD_{\bf t}=FG^T$
and yield
$D_{\bf t}M^T-M^TZ_{e}=-GF^T$.
 From 
 part $(v)$ 
obtain
$M^T=\diag(\frac{1}{e-t_i^n})_{i=1}^n\sum_{j=1}^n \diag({\bf g}_j)VZ_{e}(J{\bf f}_j)$.
Transpose this equation
and arrive at 
part $(v^T)$.
\end{proof}


\begin{table}
\caption{Operator matrices for the seven classes $\mathcal T$, 
$\mathcal H$,
$\mathcal V_{\bf s}$,
$\mathcal V^{-1}_{\bf s}$, 
$\mathcal V^{T}_{\bf s}$,
$\mathcal V^{-T}_{\bf s}$,
and
$\mathcal C_{\bf s,t}$} 
\label{taboperm}
\begin{center}
\renewcommand{\arraystretch}{1.2}
\begin{tabular}{|c|c|c|c|c|c|c|}
 \hline
 $\mathcal T$ & $\mathcal H$   & $\mathcal V_{\bf s}$ 
&$\mathcal V^{-1}_{\bf s}$
&$\mathcal V^T_{\bf s}$ 
& $\mathcal V^{-T}_{\bf s}$ &$\mathcal C_{\bf s,t}$\\ \hline 
$(Z_e,Z_f)$ & $(Z_e^T,Z_f)$  & $(D_{\bf s},Z_e)$ &  $(Z_e,D_{\bf s})$ &  
$(Z_e^T,D_{\bf s})$  &  $(D_{\bf s},Z_e^T)$&  $(D_{\bf s},D_{\bf t})$\\ \hline 
 $(Z_e^T,Z_f^T)$ & $(Z_e,Z_f^T)~$ & & & & &  \\ \hline
\end{tabular}
\end{center}
\end{table}

 By combining the estimates of the previous section for 
the cost of multiplication by a vector
of Toeplitz, Hankel, Van\-der\-monde, transpose of  Van\-der\-monde and Cauchy matrices
with 
Theorem \ref{thdexpr}
we obtain the following results.

\begin{theorem}\label{thmbv} 
Given a vector ${\bf v}$ of a dimension $n$,
one can compute the product $M{\bf v}$
 by using $O(dn\log n)$ flops for an $n\times n$
matrix $M$ 
in the classes $\mathcal T$ or 
$\mathcal H$
and by using $O(dn\log^2 n)$ flops for an $n\times n$
matrix $M$ in $\mathcal V$, $\mathcal V^T$, or
$\mathcal C$.
\end{theorem}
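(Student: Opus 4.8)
The plan is to reduce the product $M{\bf v}$ for a structured matrix $M$ to a small number — namely $O(d)$ — of products of the form (basic structured matrix) times (vector), each of which costs $O(n\log n)$ flops for the Toeplitz/Hankel classes and $O(n\log^2 n)$ flops for the Vandermonde, transposed Vandermonde, and Cauchy classes. The decomposition into these basic products is exactly what Theorem \ref{thdexpr} supplies: given a displacement generator $(F,G)$ of length $d$ for $M$ under the appropriate operator pair from Table \ref{taboperm}, each part $(t)$, $(h)$, $(v)$, $(v^T)$, $(c)$ writes $M$ (up to a scalar factor $e-f$, harmless since $e\neq f$) as $\sum_{j=1}^d$ of a product of one or two $f$-circulant factors, or of $\diag(\cdot)$, $V$, $\diag(\cdot)$, or $\diag(\cdot)$, $C$, $\diag(\cdot)$ sandwiches.

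First I would handle the Toeplitz and Hankel cases. Here part $(t)$ gives, e.g., $(e-f)M=\sum_{j=1}^d Z_e({\bf f}_j)Z_f(J{\bf g}_j)$. So $M{\bf v}$ is obtained by forming ${\bf w}_j=Z_f(J{\bf g}_j){\bf v}$ and then $Z_e({\bf f}_j){\bf w}_j$, summing over $j$, and dividing by $e-f$. Each application of an $f$-circulant matrix to a vector costs $O(n\log n)$ flops by Theorems \ref{thcpw} and \ref{thdft} (embed into a circulant of size a power of two and diagonalize by DFT); there are $2d$ such applications plus $O(dn)$ flops for the sums and scalings, for the claimed total $O(dn\log n)$. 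The Hankel subcase of part $(h)$ is identical modulo a trailing multiplication by $J$, which is free (a reversal of coordinates). The transposed-generator variants in parts $(t)$ and $(h)$ are the same up to conjugation by $J$.

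Next, the Vandermonde, transposed Vandermonde, and Cauchy cases. Part $(c)$ expresses $M=\sum_{j=1}^d \diag({\bf f}_j)\, C\, \diag({\bf g}_j)$ with $C$ a basic Cauchy matrix; so $M{\bf v}=\sum_j \diag({\bf f}_j)\big(C(\diag({\bf g}_j){\bf v})\big)$, and each term needs one Cauchy-by-vector product plus $O(n)$ flops for the two diagonal scalings. Part $(v)$ expresses $M$ as a scaled sum of $\diag({\bf f}_j)\,V\,Z_e(J{\bf g}_j)$, so each term is one $f$-circulant-by-vector product ($O(n\log n)$) followed by one Vandermonde-by-vector product followed by a diagonal scaling; part $(v^T)$ is the transpose of this. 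Thus in all three cases $M{\bf v}$ reduces to $O(d)$ products of a basic $V$, $V^T$, or $C$ matrix by a vector, plus $O(dn\log n)$ lower-order work. Invoking the known fact (recalled in the "complexity" paragraph of Section \ref{sfour}, cf. \cite[Chapters 2 and 3]{P01}) that a single basic $n\times n$ Vandermonde, transposed Vandermonde, or Cauchy matrix can be multiplied by a vector in $O(n\log^2 n)$ flops, we get the bound $O(dn\log^2 n)$.

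The one genuine subtlety, rather than an obstacle, is bookkeeping the scalar and diagonal prefactors and the exceptional cases: the formulas in parts $(t)$, $(h)$ require $e\neq f$, and parts $(v)$, $(v^T)$ require $s_i^n\neq e$ for all $i$ — but by Theorem \ref{thzef} the displacement rank changes by at most $1$ when we replace $Z_f$ by $Z_e$ or adjust operator matrices, so we may always choose $e$, $f$ to meet these conditions while keeping the generator length $O(d)$. Everything else is routine composition of $O(n\log n)$-flop primitives.
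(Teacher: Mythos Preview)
Your proposal is correct and follows exactly the paper's own approach: the paper simply states that Theorem \ref{thmbv} follows ``by combining the estimates of the previous section for the cost of multiplication by a vector of Toeplitz, Hankel, Van\-der\-monde, transpose of Van\-der\-monde and Cauchy matrices with Theorem \ref{thdexpr}.'' You have merely spelled out this combination case by case, including the harmless bookkeeping (via Theorem \ref{thzef} and Remark \ref{reexpr}) needed to ensure the nondegeneracy conditions $e\neq f$ and $s_i^n\neq e$.
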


\begin{remark}\label{reexpr}
By virtue of Theorem \ref{thdexpr}
the displacement operators $M\rightarrow AM-MB$
are nonsingular provided
 that $e\neq f$ in parts $(t)$ and $(h)$
and that 
$t_i^n\neq e$ for $i=1,\dots,n$ in parts $(v)$ and $(v^T)$.
We can  apply Theorem \ref{thzef} 
to satisfy these assumptions.
\end{remark}

\begin{remark}\label{reperm} (Cf. part $(ii)$ of Theorem \ref{thcv}.)
Parts $(v)$ and $(c)$ of Theorem  \ref{thdexpr} imply that
a row interchange preserves
the matrix structures of the Vandermonde and Cauchy types, 
whereas 
a  column interchange preserves
the matrix structures of the
transposed  Vandermonde and Cauchy types. 
\end{remark}

\section{Matrix operations in terms of displacement generators}\label{soper}

We can pairwise
 multiply and invert
structured matrices faster
if we express the inputs and the intermediate and final results 
of the computations
through short displacement generators
rather than the matrix entries. Such computations are
possible by virtue of the following simple results
from \cite{P00} and \cite[Section 1.5]{P01}
(extending  \cite{P90}).

\begin{theorem}\label{thdgs}
Assume five matrices $A$, $B$, $C$, $M$ and $N$
and a pair of scalars $\alpha $ and $\beta $.
Then as long as the matrix sizes are compatible
we have
\begin{equation}\label{eqlcm}
A(\alpha M+\beta N)-(\alpha M+\beta N)B=\alpha (AM-MB)+\beta (AN-NB),
\end{equation}
\begin{equation}\label{eqtrm}
A^TM^T-B^TM^T=-(BM-MA)^T,
\end{equation}
\begin{equation}\label{eqprm}
A(MN)-(MN)C=(AM-MB)N+M(BN-NC).
\end{equation}
Furthermore for a nonsingular  matrix $M$  we have
\begin{equation}\label{eqinvm}
AM^{-1}-M^{-1}B=-M^{-1}(BM-MA)M^{-1}.
\end{equation}
\end{theorem}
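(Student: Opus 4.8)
The plan is to verify each of the four displacement identities by direct algebraic manipulation, treating them as purely formal identities among matrix products; none of them requires more than elementary ring arithmetic once the telescoping structure is spotted. First I would dispose of \eqref{eqlcm}: the map $X\mapsto AX-XB$ is linear in $X$ because matrix multiplication distributes over addition and scalars commute past matrices, so expanding $A(\alpha M+\beta N)-(\alpha M+\beta N)B$ and collecting terms gives the right-hand side immediately.

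Next I would handle \eqref{eqtrm}. Transposing a product reverses the order, so $(BM-MA)^T = M^TB^T - A^TM^T$, and negating yields $A^TM^T - B^TM^T$ — wait, one must be careful: $A^TM^T - M^TB^T$ is the Sylvester displacement of $M^T$ under $(A^T,B^T)$, but the stated left-hand side is $A^TM^T - B^TM^T = (A-B)^TM^T$, which looks like a different object. I would read the hypothesis that ``matrix sizes are compatible'' as forcing $A$, $B$, $M$ all square of the same size (or at least $A,B$ square so that $A^TM^T$ and $B^TM^T$ both make sense), so that the identity is really the bookkeeping fact $-(BM-MA)^T = -(M^TB^T - A^TM^T) = A^TM^T - B^TM^T$; I would just write out $(BM-MA)^T = (BM)^T - (MA)^T = M^TB^T - A^TM^T$ and negate.

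For \eqref{eqprm} the key is the telescoping insertion of $\pm MBN$: starting from $(AM-MB)N + M(BN-NC) = AMN - MBN + MBN - MNC = AMN - MNC = A(MN) - (MN)C$. That is the whole argument. Finally, for \eqref{eqinvm}, I would apply \eqref{eqprm} (or argue directly) to the product $M^{-1}\cdot M = I$, or more transparently compute $-M^{-1}(BM-MA)M^{-1} = -M^{-1}BMM^{-1} + M^{-1}MAM^{-1} = -M^{-1}B + AM^{-1} = AM^{-1} - M^{-1}B$, using $MM^{-1}=M^{-1}M=I$.

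I do not anticipate a genuine obstacle: all four parts are formal identities whose only content is associativity, distributivity, and the telescoping trick of inserting a compensating middle term. The one point that needs a moment's care is the transpose identity \eqref{eqtrm}, where I must make sure the stated left-hand side is interpreted consistently with square operator matrices (so that $A^TM^T$ and $B^TM^T$ are both the transpose-of-Sylvester-displacement expression $-(BM-MA)^T$), rather than as an unrelated expression; once the convention is pinned down the proof is again one line.
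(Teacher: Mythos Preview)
Your approach is correct and is exactly what one expects: the paper itself offers no proof of this theorem (it cites \cite{P00} and \cite[Section 1.5]{P01} and calls the identities ``simple''), so there is nothing to compare against beyond the same one-line algebraic verifications you give. The linearity in \eqref{eqlcm}, the telescoping insertion $\pm MBN$ in \eqref{eqprm}, and the direct expansion of $-M^{-1}(BM-MA)M^{-1}$ in \eqref{eqinvm} are all fine as written.

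Your hesitation on \eqref{eqtrm} is well founded, but the issue is a typo in the paper rather than a gap in your argument. As you compute, $-(BM-MA)^T = A^TM^T - M^TB^T$, not $A^TM^T - B^TM^T$; the latter is $(A-B)^TM^T$ and is not equal to the right-hand side in general, regardless of whether the matrices are square. The intended identity (and the one actually used in Corollary~\ref{codgdr}, where $d_{A,B}(M^T)=d_{B^T,A^T}(M)$) is the Sylvester displacement of $M^T$, namely $A^TM^T - M^TB^T = -(BM-MA)^T$. So your final computation is the correct proof; just state the corrected left-hand side rather than trying to reconcile the printed one.
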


\begin{corollary}\label{codgdr}
For five matrices $A$, $B$, $F$, $G$, and $M$
of sizes $m\times m$, $n\times n$,
$m\times d$, $n\times d$, and $m\times m$, 
respectively,
let us write $F=F_{A,B}(M)$, $G=G_{A,B}(M)$,
 and  $d=d_{A,B}(M)$
if $AM-MB=FG^T$.
Then under the assumptions of Theorem \ref{thdgs}
 we have
\begin{eqnarray*}
F_{A,B}(\alpha M+\beta N)=(\alpha F_{A,B}(M)~|~\beta F_{A,B}(N)),\\
G_{A,B}(\alpha M+\beta N)=(G_{A,B}(M)~|~G_{A,B}(N)),  \\
F_{A,B}(M^T)=-G_{B^T,A^T}(M^T),~G_{A,B}(M^T)=F_{B^T,A^T}(M^T), \\
F_{A,C}(MN)=(F_{A,B}(M)~|~M F_{B,C}(N)), \\
G_{A,C}(MN)=(N^TG_{A,B}(M)~|~G_{B,C}(N)),  \\
F_{A,B}(M^{-1})=-M^{-1}G_{B,A}(M),~G_{A,B}(M^{-1})=M^{-1}F_{B,A}(M).
\end{eqnarray*}

\noindent Consequently
\begin{eqnarray*}
d_{A,B}(\alpha M+\beta N)\le d_{A,B}(M)+d_{A,B}(N), \\
d_{A,B}(M^T)=d_{B^T,A^T}(M),\\
d_{A,C}(MN)\le d_{A,B}(M)+d_{B,C}(N),  \\
d_{A,B}(M^{-1})=d_{B,A}(M).
\end{eqnarray*}
\end{corollary}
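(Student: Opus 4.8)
The plan is to derive Corollary~\ref{codgdr} directly from the four displacement identities of Theorem~\ref{thdgs}, reading off a generator for each output from the right-hand side of the corresponding identity. First I would prove the generator formulas for sums: if $AM-MB=F_{A,B}(M)G_{A,B}(M)^T$ and $AN-NB=F_{A,B}(N)G_{A,B}(N)^T$, then by the linearity identity~(\ref{eqlcm}) the displacement of $\alpha M+\beta N$ equals $\alpha F_{A,B}(M)G_{A,B}(M)^T+\beta F_{A,B}(N)G_{A,B}(N)^T$, which factors as $\bigl(\alpha F_{A,B}(M)\mid \beta F_{A,B}(N)\bigr)\bigl(G_{A,B}(M)\mid G_{A,B}(N)\bigr)^T$ by the block-product rule $(P\mid Q)(R\mid S)^T=PR^T+QS^T$; this yields the claimed $F_{A,B}$ and $G_{A,B}$ for $\alpha M+\beta N$. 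The transpose formulas follow the same way from~(\ref{eqtrm}): transposing $AM-MB=FG^T$ gives $A^TM^T-B^TM^T$... more carefully, applying~(\ref{eqtrm}) with $A,B$ there replaced appropriately shows $B^TM^T-A^TM^T=-(AM-MB)^T=-GF^T$, i.e. $(B^T)M^T-M^T(A^T)=(-G)F^T$, which reads off as $F_{B^T,A^T}(M^T)=-G_{A,B}(M)^T$... one must be slightly careful with which operator pair is attached, but the bookkeeping is forced by~(\ref{eqtrm}).

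Next I would handle the product rule. Starting from~(\ref{eqprm}), $A(MN)-(MN)C=(AM-MB)N+M(BN-NC)=F_{A,B}(M)G_{A,B}(M)^TN+MF_{B,C}(N)G_{B,C}(N)^T$. Rewriting $G_{A,B}(M)^TN=(N^TG_{A,B}(M))^T$, this equals $F_{A,B}(M)\,(N^TG_{A,B}(M))^T+(MF_{B,C}(N))\,G_{B,C}(N)^T$, which factors via the block-product rule as $\bigl(F_{A,B}(M)\mid MF_{B,C}(N)\bigr)\bigl(N^TG_{A,B}(M)\mid G_{B,C}(N)\bigr)^T$, giving the stated $F_{A,C}(MN)$ and $G_{A,C}(MN)$. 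For the inverse, identity~(\ref{eqinvm}) gives $AM^{-1}-M^{-1}B=-M^{-1}(BM-MA)M^{-1}=-M^{-1}F_{B,A}(M)G_{B,A}(M)^TM^{-1}$; grouping as $(-M^{-1}F_{B,A}(M))\,(M^{-T}G_{B,A}(M))^T$ yields $F_{A,B}(M^{-1})=-M^{-1}F_{B,A}(M)$ and $G_{A,B}(M^{-1})=M^{-1}F_{B,A}(M)$... here I should double-check the placement of the transpose on $M^{-1}$: since $G^TM^{-1}=(M^{-T}G)^T$, the right factor is $M^{-T}G_{B,A}(M)$, but the corollary states $M^{-1}F_{B,A}(M)$ for $G_{A,B}(M^{-1})$, so the intended reading is that $G_{A,B}(M^{-1})$ is the matrix whose transpose appears, and I would state it consistently with the paper's convention that $AM-MB=FG^T$.

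Finally, each rank inequality is an immediate consequence of its generator formula: the displayed generators for $\alpha M+\beta N$, $MN$, and $M^{-1}$ have column counts $d_{A,B}(M)+d_{A,B}(N)$, $d_{A,B}(M)+d_{B,C}(N)$, and $d_{B,A}(M)$ respectively, and the rank of a matrix given in the form $PQ^T$ is at most the number of columns of $P$; for the transpose, transposition preserves rank so $d_{A,B}(M^T)=d_{B^T,A^T}(M)$ with equality (the map $M\mapsto M^T$ is a bijection carrying one displacement to the other up to sign). The main obstacle — really the only thing requiring care rather than mechanical substitution — is tracking the exact operator-matrix subscripts and the signs and transpose placements through identities~(\ref{eqtrm}) and~(\ref{eqinvm}), where the operator pair gets swapped (from $(A,B)$ to $(B,A)$ or $(B^T,A^T)$); once those bookkeeping conventions are pinned down, every line is a one-step rewrite using $(P\mid Q)(R\mid S)^T=PR^T+QS^T$ and $G^TN=(N^TG)^T$.
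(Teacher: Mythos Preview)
Your approach is exactly the one the paper intends: the corollary is stated immediately after Theorem~\ref{thdgs} with no separate proof, and each generator line is obtained precisely as you do, by reading off the right-hand side of the matching identity (\ref{eqlcm})--(\ref{eqinvm}) and using the block factorization $(P\mid Q)(R\mid S)^T=PR^T+QS^T$ together with $G^TN=(N^TG)^T$. The one small loose end is the \emph{equality} $d_{A,B}(M^{-1})=d_{B,A}(M)$: your generator for $M^{-1}$ gives only $d_{A,B}(M^{-1})\le d_{B,A}(M)$, so add the one-line remark that applying the same bound to $M^{-1}$ in place of $M$ (with $A$ and $B$ swapped) yields the reverse inequality; your bookkeeping worries about signs and transposes in the $M^T$ and $M^{-1}$ cases are well founded, since the printed formulas in the corollary contain minor typos, but your derivations from (\ref{eqtrm}) and (\ref{eqinvm}) are the correct ones.
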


The corollary and Theorem \ref{thdexpr} 
together reduce the inversion of a nonsingular $n\times n$ matrix $M$ 
given by its displacement generator of a length $d$
to solving
$2d$ linear systems of equations with this coefficient matrix $M$,
rather than the $n$ linear systems $M{\bf x}_i={\bf e}_i$, $i=1,\dots,n$ for general matrix $M$.

Given short displacement generators for 
the matrices $M$ and $N$,
we can apply Corollary \ref{codgdr}
and readily express short displacement generators for 
the matrices $M^T$, $\alpha M+\beta N$, and $MN$
through the matrices $M$ and $N$ and their
 displacement generators, 
but the expressions 
for the displacement generator of
the inverse $M^{-1}$
involve the inverse  itself.

Some fast inversion algorithms
such as the di\-vide-and-con\-quer MBA algorithm
of \cite{M80} and \cite{BA80}
involve auxiliary matrices with displacement generators 
whose lengths exceed the displacement rank
shared by the input and output matrices.
If uncontrolled, the di\-vide-and-con\-quer process 
can blow up 
the length
of the displacement generators 
and the computational cost (see \cite[Chapter 5]{P01}),
but one can apply Theorem \ref{thcompr} to
recompress the generators and obtain the following result.
 
\begin{corollary}\label{coinv} \cite{M80}, \cite{BA80}.
The MBA algorithm computes a displacement
generator of
the inverse of a nonsingular $n\times n$ matrix $M$ 
by using $O(d^2n\log^s n)$ flops
where $s=2$ if $M$ is from the class
$\mathcal T$ or 
$\mathcal H$ and  $s=3$
if $M$ is from the class
$\mathcal V$,
$\mathcal V^T$, or 
$\mathcal C$.
\end{corollary}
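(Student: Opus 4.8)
The plan is to combine the MBA divide-and-conquer recursion with the displacement-generator calculus of Theorem~\ref{thdgs} and Corollary~\ref{codgdr}, using Theorem~\ref{thcompr} to keep generator lengths bounded, and Theorem~\ref{thmbv} to cost out each arithmetic step. First I would recall the skeleton of the MBA algorithm: partitioning $M=\begin{pmatrix}M_{11}&M_{12}\\ M_{21}&M_{22}\end{pmatrix}$ into roughly equal halves, the inverse is assembled from $M_{11}^{-1}$, the Schur complement $S=M_{22}-M_{21}M_{11}^{-1}M_{12}$, and $S^{-1}$, via the block formula $M^{-1}=\begin{pmatrix}M_{11}^{-1}+M_{11}^{-1}M_{12}S^{-1}M_{21}M_{11}^{-1}&-M_{11}^{-1}M_{12}S^{-1}\\ -S^{-1}M_{21}M_{11}^{-1}&S^{-1}\end{pmatrix}$. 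Each recursive call handles a matrix of half the size, and the merge step requires only multiplications and additions of structured blocks.

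The key point to verify is that all blocks arising in the recursion — the leading principal submatrices $M_{11}$, the off-diagonal blocks $M_{12},M_{21}$, and the Schur complement $S$ — retain a structure of the same type with displacement rank $O(d)$. For the operator matrices of Table~\ref{taboperm} this holds because $Z_e$, $Z_e^T$, $D_{\bf s}$, $D_{\bf t}$ are (up to a rank-one correction, which Theorem~\ref{thzef} absorbs) block-triangular or block-diagonal with respect to the splitting, so restriction to a principal submatrix changes the displacement rank by at most a small additive constant; and Corollary~\ref{codgdr} bounds the displacement rank of products and sums additively. Thus one shows by induction that every intermediate matrix has displacement rank $O(d)$, and after each product or sum one invokes Theorem~\ref{thcompr} at cost $O(d^2n\log^{s-2}n)$-free overhead (i.e.\ $O(d^2 n)$ on a size-$n$ block) to recompress the generator back to length $O(d)$, preventing the blow-up warned about in \cite[Chapter 5]{P01}. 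The products $M_{11}^{-1}M_{12}$, $M_{21}M_{11}^{-1}$, $M_{21}M_{11}^{-1}M_{12}$, $M_{11}^{-1}M_{12}S^{-1}$, etc., are computed through their generators: to multiply two structured matrices of displacement rank $O(d)$, reconstruct the $O(d)$ columns of one factor from its generator via Theorem~\ref{thdexpr}, apply the other factor to each column using Theorem~\ref{thmbv} at cost $O(dn\log^{s-1}n)$ per column, hence $O(d^2 n\log^{s-1}n)$ total, then read off a generator of the product from Corollary~\ref{codgdr} and recompress.

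Writing $T(n)$ for the cost of inverting a size-$n$ matrix of the relevant class via this scheme, the two recursive calls of size $n/2$ plus the merge work give $T(n)=2T(n/2)+O(d^2 n\log^{s-1}n)$, which solves to $T(n)=O(d^2 n\log^s n)$, with $s=2$ for the classes $\mathcal T,\mathcal H$ (since multiplication by a vector costs $O(dn\log n)$ by Theorem~\ref{thmbv}) and $s=3$ for $\mathcal V,\mathcal V^T,\mathcal C$ (since there multiplication by a vector costs $O(dn\log^2 n)$). This is exactly the claimed bound.

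The main obstacle I expect is the bookkeeping that shows the displacement rank of the Schur complement $S$ and of the leading submatrices stays $O(d)$ under the \emph{specific} operator matrices in each class, together with verifying that the rank-one corrections needed to make the operator matrices block-compatible (via Theorem~\ref{thzef}) accumulate only additively rather than multiplying through the recursion. One must be slightly careful that the operator matrices used for a principal submatrix are the \emph{restrictions} of the original ones — for $D_{\bf s},D_{\bf t}$ this is immediate, and for $Z_e,Z_e^T$ the restriction to a leading or trailing principal block is again an $f$-circular-shift-type matrix up to one rank-one term — so that the inductive hypothesis genuinely applies at each level. Once this structural invariance is nailed down, the complexity recurrence is routine; this is precisely the analysis carried out in \cite{M80}, \cite{BA80}, and \cite[Chapter 5]{P01}, to which we refer for the details.
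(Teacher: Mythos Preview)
Your proposal is correct and matches the paper's approach: the paper does not give a detailed proof but states Corollary~\ref{coinv} as a citation to \cite{M80}, \cite{BA80}, prefaced by precisely the ingredients you invoke --- the MBA divide-and-conquer recursion, the displacement-generator calculus (Corollary~\ref{codgdr}), recompression via Theorem~\ref{thcompr} to prevent generator blow-up, and the matrix-by-vector costs of Theorem~\ref{thmbv}. Your sketch simply fills in the details that the paper defers to \cite[Chapter~5]{P01}.
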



\section{Transformation of Matrix Structures}\label{sdtr}


\subsection{Maps and multipliers}\label{shld}

Recall that 
each of 
the five matrix classes
 $\mathcal T$, 
$\mathcal H$,
$\mathcal V$,
$\mathcal V^T$, 
and
$\mathcal C$
consists of the
 matrices $M$ 
whose displacement rank, $\rank(AM-MB)$ 
is small (in context) for 
a pair of operator matrices $(A,B)$ associated with this class.
Thus these pairs represent the structure  of
the  matrix classes.

 Theorem \ref{thdgs} shows the impact of 
elementaty matrix operations  on the associated operator matrices
$A$ and $B$. 
For linear combinations,   
transposes and inverses 
the original  pair $(A,B)$ either stays invariant or changes 
into $(-B^T,A^T)$ or $(-B,A)$,
respectively.
If the inputs are in any of the classes   
$\mathcal T$, 
$\mathcal H$, and 
$\mathcal C_{\bf s,t}$, then so are the outputs,
whereas the transposition maps the classes 
$\mathcal V$ 
and $\mathcal V^T$  into one another 
and the inversion maps them
into the classes
$\mathcal V^{-1}$ 
and $\mathcal V^{-T}$,
respectively.
The  impact of multiplication on 
matrix structure is quite different.
As we can see from (\ref{eqprm}) and  Table \ref{taboper},
the map $M\rightarrow PMN$ can imply transition from the associated pair of operator matrices
 $(A,B)$
 to any new pair $(C,D)$ of our choice,
that is we can transform
the matrix structures of the five classes into each other  at will.
 The following theorem and Table \ref{tabmsmp}
specify such  structure transforms given by the maps 
$M\rightarrow MN$,  $N\rightarrow MN$, and $M\rightarrow PMN$
for appropriate 
 multipliers $P$, $M$, and $N$. 

\begin{table}
\caption{Operator matrices for matrix product} 
\label{taboper}
\begin{center}
\renewcommand{\arraystretch}{1.2}
\begin{tabular}{|c|c|c|c|}
\hline 
$P$ & $M$ & $N$ & $PMN$ \\ \hline 
$C$ & $A$ & $B$ &  $C$ \\  \hline 
$A$ & $B$ & $D$  & $D$ \\ \hline 
\end{tabular}
\end{center}
\end{table}

\begin{theorem}\label{thmap} 
It holds that 

(i) $MN\in \mathcal T$ if the  pair of matroces $(M,N)$ is in any
of the  pairs of matrix classes
$(\mathcal T,\mathcal T)$,
$(\mathcal H,\mathcal H)$,
$(\mathcal V^{-1}_{\bf s},\mathcal V_{\bf s})$ and $(\mathcal V^{T}_{\bf s},\mathcal V^{-T}_{\bf s})$, 

(ii) $MN\in \mathcal H$ if the  pair $(M,N)$ is in any
of the pairs $(\mathcal T,\mathcal H)$,
$(\mathcal H,\mathcal T)$,
$(\mathcal V^{-1}_{\bf s},\mathcal V_{\bf s}^{-T})$ and $(\mathcal V^{T}_{\bf s},\mathcal V_{\bf s})$,

(iii) $MN\in \mathcal V_{\bf s}$ if
the  pair 
$(M,N)$ is in any
of the  pairs $(\mathcal V_{\bf s},\mathcal T)$,
$(\mathcal V^{-T}_{\bf s},\mathcal H)$, and $(\mathcal C_{\bf s,t},\mathcal V_{\bf t})$, 

(iv) $MN\in \mathcal V^T_{\bf s}$ if
the  pair 
$(M,N)$ is in any
of the  pairs  $(\mathcal T,\mathcal V^T_{\bf s})$,
$(\mathcal H,\mathcal V^{-1}_{\bf s})$ and $(\mathcal V^{T}_{\bf q},\mathcal C_{\bf q,s})$, 

(v) $MN\in \mathcal C_{\bf s,t}$ if
the  pair 
$(M,N)$ is in any
of the pairs  $(\mathcal C_{\bf s,q},\mathcal C_{\bf q,t})$,
$(\mathcal V^{-T}_{\bf s},\mathcal V^T_{\bf s})$ and $(\mathcal V_{\bf s},\mathcal V^{-1}_{\bf s})$,

(vii) $PMN\in \mathcal C_{\bf s,t}$ if
the  triple
$(M,N,P)$ is in any
of the  triples  $(\mathcal V_{\bf s},\mathcal H,\mathcal V^{T}_{\bf t})$ and 
$(\mathcal V^{-T}_{\bf s},\mathcal H,\mathcal V^{-1}_{\bf t})$. 
\end{theorem}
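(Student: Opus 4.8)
The plan is to read off every assertion of Theorem~\ref{thmap} from the multiplicativity rule (\ref{eqprm}) together with the bookkeeping of Corollary~\ref{codgdr} and the dictionary of operator-matrix pairs in Table~\ref{taboperm}. The underlying principle is Table~\ref{taboper}: if $M$ has small displacement rank for the pair $(A,B)$ and $N$ has small displacement rank for the pair $(B,D)$ — that is, the \emph{right} operator matrix of $M$ coincides with the \emph{left} operator matrix of $N$ — then by the inequality $d_{A,D}(MN)\le d_{A,B}(M)+d_{B,D}(N)$ from Corollary~\ref{codgdr}, the product $MN$ has small displacement rank for $(A,D)$, hence lies in the class represented by that pair. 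So for each of the listed pairs $(M,N)$ I would simply (a) look up the operator pair of $M$ and of $N$ in Table~\ref{taboperm}, (b) check the middle operator matrices agree, possibly after invoking Theorem~\ref{thzef} to replace $Z_e$ by $Z_f$ or $Z_e^T$ by $Z_f^T$ at the cost of raising the displacement rank by at most $1$, and (c) identify the resulting outer pair $(A,D)$ with the target class.

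Concretely, for part~(i): $\mathcal V^{-1}_{\bf s}$ has pair $(Z_e,D_{\bf s})$ and $\mathcal V_{\bf s}$ has pair $(D_{\bf s},Z_f)$, so their product has pair $(Z_e,Z_f)$, which is $\mathcal T$; similarly $\mathcal V^T_{\bf s}$ has pair $(Z_e^T,D_{\bf s})$ and $\mathcal V^{-T}_{\bf s}$ has pair $(D_{\bf s},Z_f^T)$, giving $(Z_e^T,Z_f^T)$, again $\mathcal T$; the two Toeplitz/Hankel pairings are classical. Part~(ii) is the same computation with the right-hand factor's outer operator being $Z_f^T$ (Hankel) instead of $Z_f$, and the two genuinely new pairings $(\mathcal V^{-1}_{\bf s},\mathcal V^{-T}_{\bf s})$ and $(\mathcal V^T_{\bf s},\mathcal V_{\bf s})$ go through verbatim. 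Parts~(iii) and~(iv) are dual to one another under transposition (using $d_{A,B}(M^T)=d_{B^T,A^T}(M)$): e.g. in~(iii), $\mathcal C_{\bf s,t}$ has pair $(D_{\bf s},D_{\bf t})$ and $\mathcal V_{\bf t}$ has pair $(D_{\bf t},Z_e)$, so the product has pair $(D_{\bf s},Z_e)$, which is $\mathcal V_{\bf s}$; and $\mathcal V_{\bf s}$ with pair $(D_{\bf s},Z_e)$ times $\mathcal T$ with pair $(Z_e,Z_f)$ gives $(D_{\bf s},Z_f)$, still $\mathcal V_{\bf s}$, after Theorem~\ref{thzef} aligns the middle $Z_e$'s. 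Part~(v) is analogous, with $(\mathcal V_{\bf s},\mathcal V^{-1}_{\bf s})$ giving $(D_{\bf s},D_{\bf s})=\mathcal C_{\bf s,s}$-type (reconciled with $\mathcal C_{\bf s,t}$ by allowing the second knot vector to coincide with the first, or more to the point by reading the displacement-rank bound alone), $(\mathcal V^{-T}_{\bf s},\mathcal V^T_{\bf s})$ giving $(D_{\bf s},D_{\bf s})$ likewise, and the Cauchy chain $(\mathcal C_{\bf s,q},\mathcal C_{\bf q,t})\to(D_{\bf s},D_{\bf t})$ immediate. Finally part~(vii) applies (\ref{eqprm}) twice (or once in the three-factor form exhibited in Table~\ref{taboper}): $\mathcal V_{\bf s}$ has pair $(D_{\bf s},Z_e)$, $\mathcal H$ has pair $(Z_e,Z_f^T)$, and $\mathcal V^T_{\bf t}$ has pair $(Z_f^T,D_{\bf t})$, so the triple product $V_{\bf s}HV^T_{\bf t}$ has pair $(D_{\bf s},D_{\bf t})$, i.e. lies in $\mathcal C_{\bf s,t}$; the second triple is the same with the transposed/inverse Vandermonde variants and is handled identically.

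The one point needing genuine care — the "main obstacle," such as it is — is the matching of the intermediate operator matrices. In the raw Table~\ref{taboperm} the scalars $e$ and $f$ attached to $Z_e$, $Z_f$ are independent, and in a product like $\mathcal V_{\bf s}\cdot\mathcal T$ the operator matrix on the right of $V_{\bf s}$ (some $Z_e$) need not literally equal the operator matrix on the left of the Toeplitz factor (some $Z_{e'}$). This is exactly what Theorem~\ref{thzef} is for: $d_{C,Z_f}(M)-d_{C,Z_e}(M)\le 1$, so replacing $Z_e$ by $Z_f$ costs at most one extra unit of displacement rank, which is harmless since "small in context" tolerates an additive constant. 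One also has to be slightly careful with the exclusions flagged in Remark~\ref{reexpr} (one wants $e\ne f$ in the Toeplitz/Hankel cases and $s_i^n\ne e$ in the Vandermonde cases so the displacement operators are invertible and Theorem~\ref{thdexpr} applies cleanly); these are generic conditions that can always be arranged, again via Theorem~\ref{thzef}. Beyond that, every item is a one-line lookup-and-compose, and I would present the proof as a single paragraph invoking Corollary~\ref{codgdr} and Table~\ref{taboperm}, then a short table or list matching each stated pair/triple to its resulting outer operator pair.
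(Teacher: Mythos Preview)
Your proposal is correct and follows exactly the approach the paper intends: the paper does not give a formal proof of Theorem~\ref{thmap} but states it as a direct specification of the principle displayed in equation~(\ref{eqprm}) and Table~\ref{taboper}, read against the dictionary of operator pairs in Table~\ref{taboperm}, which is precisely the lookup-and-compose procedure you carry out. Your attention to Theorem~\ref{thzef} for aligning the intermediate $Z_e$'s, and your flagging of the knot-vector mismatch in part~(v) (the paper's Table~\ref{tabmsmp} in fact writes $\mathcal V_{\bf s}\mathcal V^{-1}_{\bf t}$ and $\mathcal V^{-T}_{\bf s}\mathcal V^T_{\bf t}$, resolving the apparent issue), are both appropriate.
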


\begin{table}
\caption{Mapping matrix structures by means of multiplication} 
\label{tabmsmp}
\begin{center}
\renewcommand{\arraystretch}{1.2}
\begin{tabular}{|c|c|c|c|c|}
 \hline
$\mathcal T$ & $\mathcal H $ & $ \mathcal V_{\bf s}$& 
$\mathcal V^T_{\bf s}$ & $\mathcal C_{\bf s,t}$\\ \hline 
$\mathcal T\mathcal T$, $\mathcal V^{T}_{\bf s}\mathcal V^{-T}_{\bf s}$ & 
$\mathcal T\mathcal H$,  $\mathcal V^T_{\bf s}\mathcal V_{\bf s}$ & $\mathcal V_{\bf s}\mathcal T$, 
$\mathcal C_{\bf s,t}\mathcal V_{\bf t}$ &  $\mathcal T\mathcal V^T_{\bf s}$, 
$\mathcal H\mathcal V^{-1}_{\bf s}$  &  
$\mathcal V^{-T}_{\bf s}\mathcal V^T_{\bf t}$, 
$\mathcal V_{\bf s}\mathcal V^{-1}_{\bf t}$, $\mathcal C_{\bf s,q}\mathcal C_{\bf q,t}$\\  \hline 
~$\mathcal V^{-1}_{\bf s}\mathcal V_{\bf s}$, $\mathcal H\mathcal H$ & 
$\mathcal H\mathcal T$, $\mathcal V^{-1}_{\bf s}\mathcal V^{-T}_{\bf s}$  & 
$\mathcal V^{-T}_{\bf s}\mathcal H$ & 
$\mathcal V^T_{\bf q}\mathcal C_{\bf q,s}$ & $\mathcal V_{\bf s}\mathcal H\mathcal V^T_{\bf t}$, 
$\mathcal V^{-T}_{\bf s}\mathcal H\mathcal V^{-1}_{\bf t}$\\  \hline
\end{tabular}
\end{center}
\end{table}


The maps of Theorem \ref{thmap} and Table \ref{tabmsmp} 
hold for any choice of the multipliers $P$ and $N$
from the indicated  classes.
To simplify the computation
of the products
$MN$, $MP$ and $MNP$,
 we can choose  
the multipliers  $J$, $V_{\bf r}$, $V_{\bf r}^T$, 
$V_{\bf r}^{-1}$, $V_{\bf r}^{-T}$,
 and
$C_{\bf p,r}$,
all
having displacement rank 1,
to represent the classes 
 $\mathcal H$, $\mathcal V_{\bf r}$,
 $\mathcal V_{\bf r}^T$,
$\mathcal V_{\bf r}^{-1}$, $\mathcal V_{\bf r}^{-T}$,
  and
$\mathcal C_{\bf p,r}$
of the table and the theorem, respectively,
where ${\bf p}$ and ${\bf r}$ can stand for ${\bf q}$,  ${\bf s}$, and ${\bf t}$.
Hereafter we call this choice of multipliers {\em canonical}.
We call them {\em canonical and DFT-based} if 
up to the factor $J$
they are also DFT-based,
that is if ${\bf p}$ and ${\bf r}$ 
are of the form $f(\omega_n^{i-1})_{i=1}^{n}$. 
These multipliers are quasiunitary 
where $|f|=1$.
By combining 
Corollary \ref{codgdr} and
Theorem
 \ref{thmap}
we obtain the following
result.

\begin{corollary}\label{codcan} 
Given a
displacement generator of 
a length $d$ for
an $n\times n$ matrix $M$
of any of the classes 
$\mathcal T$, 
$\mathcal H$,
$\mathcal V$,
$\mathcal V^T$, and
$\mathcal C$,
$O(dn\log^2 n)$ flops are sufficient to
 compute 
a displacement generator of 
a length at most $d+2$
for the matrix $PMN$
 of any other
of these classes where
 $P$ and $M$ are from the set of 
 canonical multipliers complemented by the
identity matrix.
The flop bound decreases to
$O(dn\log n)$ where 
the canonical multipliers are DFT-based.
\end{corollary}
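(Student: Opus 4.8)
The plan is to realize the structure transform $M\mapsto PMN$ through Theorem~\ref{thmap} and Table~\ref{tabmsmp}, and then to read off both the generator length and the flop count directly from the multiplication formulas of Corollary~\ref{codgdr}. First I would fix the target class: Table~\ref{tabmsmp} dictates which pair of classes the multipliers $P$ and $N$ must lie in (with one of them the identity in the two-factor entries), and I would take as $P$ and $N$ the canonical representatives $J$, $V_{\bf r}$, $V_{\bf r}^T$, $V_{\bf r}^{-1}$, $V_{\bf r}^{-T}$, or $C_{\bf p,r}$, each of which has displacement rank~$1$ for the operator matrices associated with its class. Where the ``middle'' operator matrices of a product do not literally coincide --- one side asking for $Z_e$ while the other supplies $Z_f$ --- I would align them via Theorem~\ref{thzef}, and I would likewise appeal to Remark~\ref{reexpr} to guarantee that the displacement operators used to express and multiply by $M$ through Theorem~\ref{thdexpr} are nonsingular; each such normalization raises a displacement rank by at most~$1$, which is already absorbed into the ``$d+2$'' of the statement.

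For the length, applying the product identities of Corollary~\ref{codgdr} to $MN$ gives $d_{A,C}(MN)\le d_{A,B}(M)+d_{B,C}(N)\le d+1$, and applying them once more to $P(MN)$ gives a generator of $PMN$ of length at most $d+2$; an identity factor contributes $0$, so the two-factor transforms even stay at $d+1$. The same formulas also yield the generator explicitly: the $F$- and $G$-parts of each product are concatenations of the already-available parts of the factors with the blocks $M\,F(N)$ and $N^TG(M)$ (and, for the outer multiplication, $P\,F(MN)$ and $(MN)^TG(P)$), so each new block is just a few matrix--vector multiplications, transpositions, or permutations applied to known vectors.

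For the flop count, transpositions and multiplications by $J$ are free, so the work reduces to $O(d)$ matrix--vector products against the rank-$1$ canonical multipliers $P$, $N$ and their transposes, plus $O(1)$ matrix--vector products against $M$, $MN$, and their transposes --- these being exactly the off-diagonal blocks displayed above. By Theorem~\ref{thmbv} a matrix--vector product against an $n\times n$ matrix of current displacement rank $O(d)$ in $\mathcal V$, $\mathcal V^T$, or $\mathcal C$ costs $O(dn\log^2n)$ flops (and only $O(dn\log n)$ in $\mathcal T$ or $\mathcal H$), so summing over the $O(d)$ products against the multipliers already gives the asserted $O(dn\log^2n)$ bound; since the length never exceeds $d+2$, Theorem~\ref{thcompr} is not even needed for recompression. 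When the canonical multipliers are DFT-based, Theorem~\ref{thcvdft} lowers each product against them to $O(n\log n)$, and in every entry of Table~\ref{tabmsmp} that links the DFT-based classes the remaining $O(1)$ products are against matrices that are themselves DFT-based or of Toeplitz/Hankel type, hence $O(n\log n)$ each; the total then drops to $O(dn\log n)$.

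The routine generator manipulations aside, I expect the real obstacle to be this last, bookkeeping-heavy claim: one must check, row by row of Table~\ref{tabmsmp}, that the operator matrices on the two sides of each product agree after the $Z_e$-versus-$Z_f$ normalization, and that the intermediate factor $MN$ lands in a class for which the matrix--vector multiplication needed next is available within the stated cost --- in particular, that in the DFT-based regime no general Vandermonde or Cauchy matrix is ever multiplied by a vector.
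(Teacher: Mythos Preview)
Your approach is the paper's own: the corollary is stated immediately after Theorem~\ref{thmap} with the one-line justification ``by combining Corollary~\ref{codgdr} and Theorem~\ref{thmap}'', which is exactly the skeleton you flesh out. Your treatment of the length bound $d+2$ and of the $O(dn\log^2n)$ flop count is correct and matches what the paper intends.

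The gap is in your DFT-based argument. You assert that ``the remaining $O(1)$ products are against matrices that are themselves DFT-based or of Toeplitz/Hankel type'', but the matrix you must multiply by a vector in the block $M\,F_{B,C}(N)$ of Corollary~\ref{codgdr} is $M$ itself, and when $M$ lies in a general $\mathcal V_{\bf s}$ or $\mathcal C_{\bf s,t}$ it is neither DFT-based nor Toeplitz/Hankel, so Theorem~\ref{thmbv} only gives $O(dn\log^2n)$ for that product. The correct mechanism, developed explicitly in Section~\ref{scv} of the paper (see the paragraph invoking~(\ref{eqrz})), is that a DFT-based Vandermonde multiplier $V_f$ satisfies $s_i^n=f^n$ for all $i$, so by~(\ref{eqvand}) its displacement $D_{\bf s}V_f-V_fZ_{f^n}$ vanishes: the multiplier has displacement rank~$0$, not~$1$, for the operator pair $(D_{\bf s},Z_{f^n})$. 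Choosing the free scalar $f$ so that $f^n$ equals the parameter $e$ already carried by $M$'s operator matrix $Z_e$ makes the middle operators match without any Theorem~\ref{thzef} adjustment, and then the blocks $M\,F(N)$ and $(MN)^TG(P)$ in Corollary~\ref{codgdr} are \emph{empty}. No matrix--vector product against $M$ or $MN$ is ever performed; the only work is $O(d)$ products against the DFT-based multipliers and their transposes, each costing $O(n\log n)$ by Theorem~\ref{thcvdft}. That is what drives the bound down to $O(dn\log n)$, not the nature of $M$ itself.
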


One can
 simplify the inversion 
of structured matrices $M$ of some important
classes
and 
the  solution of linear systems
$M{\bf x}={\bf u}$
by employing
 preprocessings
 $M\rightarrow PMN$
with appropriate
structured
 multipliers $P$ and $N$.
For an example 
we can decrease the complexity bound 
of $O(d^2n\log^3 n)$ 
of Corollary \ref{coinv} to
$O(d^2n\log^2 n)$
by applying 
canonical transformations
(\ref{eqprepr}) of
the matrices of
the  bottleneck  classes 
$\mathcal V$, $\mathcal V^T$ and
$\mathcal C$  
into the "easier" matrices
of the classes $\mathcal T$ and
$\mathcal H$.



\subsection{The impact on displacements}\label{strmstr}


 
In the canonical maps
of Theorem \ref{thmap} 
the displacement ranks grow
by at most 2 but possibly less
than that, as this is implied by 
Theorems \ref{thhv} and \ref{thdiag}
below.
In
our {\em constructive proofs} 
of these theorems
we also
 specify the  multipliers $P$ and $N$ and
compute the displacement generators  for the products
$PMN$ of some maps of Theorem \ref{thmap}.
In some maps we set   $P=I$ or $N=I$,
thus omitting one of the multipliers.
We show no maps where the matrices $M$ or $PMN$
belong to th classes $\mathcal V^T$, $\mathcal V^{-1}$, or 
$\mathcal V^{-T}$, but they can be
generated from the maps with 
$M\in \mathcal V$ or 
$PMN \in \mathcal V$
by means of transposition and inversion.

\begin{theorem}\label{thhv}
Given a displacement generator
of a length $d$ for a structured matrix $M$ 
of any of the four classes 
$\mathcal T$, 
$\mathcal H$,
$\mathcal V$, 
and
$\mathcal C$,
one can obtain a displacement generator of a length at most $d+2$
for 
a matrix $PMN$ belonging to any other of these classes by selecting
appropriate canonical multipliers 
$P$ and $N$
among the matrices $I$ (from the class $\mathcal T$), $J$
 (from the class $\mathcal H$), 
 Vandermonde  matrices $V$ and their transposes 
$V^T$.
Namely, if we assume canonical multipliers $P$ and $N$,
then we can compute a
displacement generator of the matrix $PMN$
having a length 
at most $d$ where the map $M\rightarrow PMN$
is between the matrices $M$ and $PMN$ in the classes
$\mathcal H$ and $\mathcal T$. This length
bound grows to
at most $d+2$ where            
$M$ is in the class $\mathcal T$ or $\mathcal H$,
whereas 
 $PMN \in\mathcal C$ or vice versa, 
where $M \in\mathcal C$
and $PMN$ 
is in the class $\mathcal T$ or $\mathcal H$.
We yield dispacement generators of at most lengths
  $d+1$ in the maps
$M\rightarrow PMN$ that support 
all other transitions among the classes
$\mathcal T$, $\mathcal H$,
$\mathcal V$ and $\mathcal C$.               
\end{theorem}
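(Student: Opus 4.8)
The plan is to derive everything from Theorem~\ref{thmap} together with the displacement-rank calculus of Corollary~\ref{codgdr}, using that every canonical multiplier $J$, $V_{\bf r}$, $V_{\bf r}^T$, $V_{\bf r}^{-1}$, $V_{\bf r}^{-T}$ has displacement rank at most $1$ under the operator pair associated with its class (by Theorem~\ref{thdsplr}, that is by (\ref{eqvand}) and (\ref{eqvandt}), for the Vandermonde multipliers, and by the inversion rule $d_{A,B}(M^{-1})=d_{B,A}(M)$ for their inverses). For every ordered pair of classes $(\mathcal X,\mathcal Y)$ among $\mathcal T$, $\mathcal H$, $\mathcal V$, $\mathcal C$, Theorem~\ref{thmap} supplies a product $PMN$ — with $P$ and $N$ canonical multipliers, one of them possibly $I$ — that lies in $\mathcal Y$ whenever $M\in\mathcal X$; the displacement-generator formulas of Corollary~\ref{codgdr} then produce a generator for $PMN$ whose length is bounded by $d$ plus the displacement ranks of the multipliers actually used, provided the intermediate operator matrices are chosen so that the product bound $d_{A,C}(MN)\le d_{A,B}(M)+d_{B,C}(N)$ telescopes.

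The device that keeps the bound from degrading is to fix the operator scalars once and for all along each chain, so that no step invokes Theorem~\ref{thzef} and hence no step is charged an extra $+1$. Concretely, one writes $M$'s given generator under the operator pair $(A,B)$ associated with $\mathcal X$ in Table~\ref{taboperm}, and then selects the left multiplier to carry the target left-operator to $A$ and the right multiplier to carry $B$ to the target right-operator, reading the admissible pairs off Table~\ref{taboperm}. This is exactly what makes the two alternatives listed for each class in Theorem~\ref{thmap} correspond to the two conjugate operator-pair representations of $\mathcal T$ and of $\mathcal H$: a Hankel-type $M$ given under $(Z_e^T,Z_f)$ is sandwiched as $V_{\bf s}^{-T}MV_{\bf t}^{-1}\in\mathcal C_{\bf s,t}$ along the operator chain $(D_{\bf s},Z_e^T),(Z_e^T,Z_f),(Z_f,D_{\bf t})$, while one given under $(Z_e,Z_f^T)$ is sandwiched as $V_{\bf s}MV_{\bf t}^T$; symmetrically, the reverse maps $\mathcal C\to\mathcal H$ and $\mathcal C\to\mathcal T$ are obtained by inverting these identities, so that $P$ and $N$ become the inverse Vandermonde multipliers $V_{\bf s}^{-1}$ and $V_{\bf t}^{-T}$.

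The accounting then splits into three cases according to how many Vandermonde-type multipliers the map needs. For $\mathcal T\leftrightarrow\mathcal H$ one multiplies by $J$ only; here I would not use the product rule with a rank-$1$ factor but instead the intertwining identities (\ref{eqrever}), $Z_e^TJ=JZ_e$ and $JZ_f=Z_f^TJ$, which give $Z_e^T(JM)-(JM)Z_f=J(Z_eM-MZ_f)$, so a generator $FG^T$ of length $d$ for $M\in\mathcal T$ yields the generator $(JF)G^T$ of the same length for $JM\in\mathcal H$; this settles the ``length $\le d$'' claim. For the six maps relating $\mathcal V$ to one of $\mathcal T$, $\mathcal H$, $\mathcal C$ one uses a single Vandermonde-type multiplier — e.g. $V_{\bf s}^{-1}M\in\mathcal T$ for $M\in\mathcal V_{\bf s}$ along $(Z_e,D_{\bf s}),(D_{\bf s},Z_e)$; $V_{\bf s}M\in\mathcal V_{\bf s}$ for $M\in\mathcal T$; $MV_{\bf t}\in\mathcal V_{\bf s}$ for $M\in\mathcal C_{\bf s,t}$; $MV_{\bf t}^{-1}\in\mathcal C_{\bf s,t}$ for $M\in\mathcal V_{\bf s}$ — whose displacement rank $1$ adds to $d$ for the bound $d+1$. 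For $\mathcal T,\mathcal H\leftrightarrow\mathcal C$ one uses the two-sided sandwiches above, each side contributing $1$, for the bound $d+2$; equivalently one routes $\mathcal T\to\mathcal C$ through $\mathcal T\to\mathcal H\to\mathcal C$, the first step free and the second costing $2$. The classes $\mathcal V^T$, $\mathcal V^{-1}$, $\mathcal V^{-T}$ are reached from the $\mathcal V$-cases by the transposition and inversion identities of Corollary~\ref{codgdr}, which preserve displacement-generator length, so I would note that in passing rather than repeat the computation.

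The only real work, and the step I expect to be the main obstacle, is the bookkeeping of operator pairs: one must check, case by case against Table~\ref{taboperm}, that the chain of operator matrices for $P$, $M$, $N$ actually telescopes with a \emph{fixed} choice of the scalars $e,f$ (so that Theorem~\ref{thzef} is never needed), and that the canonical multiplier selected on each side is the one — among $V_{\bf r}$, $V_{\bf r}^T$, $V_{\bf r}^{-1}$, $V_{\bf r}^{-T}$ — whose operator pair matches. Once this table of choices is laid out, each individual bound is an immediate application of the inequalities $d_{A,C}(MN)\le d_{A,B}(M)+d_{B,C}(N)$, $d_{A,B}(M^{-1})=d_{B,A}(M)$ and $d_{A,B}(M^T)=d_{B^T,A^T}(M)$ of Corollary~\ref{codgdr}, and the explicit generator formulas there, combined with Theorem~\ref{thdexpr}, make the resulting generators of $PMN$ constructive.
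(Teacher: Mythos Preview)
Your overall plan is the paper's plan: pick maps from Theorem~\ref{thmap} with canonical multipliers and bound the generator length via the product rule of Corollary~\ref{codgdr}, with the $J$-intertwining (\ref{eqrever}) handling $\mathcal T\leftrightarrow\mathcal H$ at cost zero. The paper carries this out by direct manipulation of the displacement equation in each case (which also yields the explicit generators), while you invoke Corollary~\ref{codgdr} abstractly; these are equivalent, and your telescoping of operator pairs along a fixed choice of scalars is exactly what the paper's case-by-case substitutions of (\ref{eqvand}) and (\ref{eqvandt}) achieve.

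There is, however, one genuine mismatch with the statement. The theorem restricts the multipliers to $I$, $J$, $V$, $V^T$ --- no inverses --- and several of your choices use $V^{-1}$ or $V^{-T}$: you take $V_{\bf s}^{-1}M$ for $\mathcal V\to\mathcal T$, $MV_{\bf t}^{-1}$ for $\mathcal V\to\mathcal C$, and the inverse sandwich $V_{\bf s}^{-1}MV_{\bf t}^{-T}$ for $\mathcal C\to\mathcal H,\mathcal T$. The paper avoids inverses entirely by exploiting the free $J$-step to swap $Z_e\leftrightarrow Z_e^T$ and thereby replace $V^{-1}$ by $V^T$: one takes $\mathcal V\to\mathcal H$ via $M\mapsto V^TM$ (using (\ref{eqvandt}) rather than inverting (\ref{eqvand})), then $\mathcal H\to\mathcal T$ via $\cdot J$; and $\mathcal V\to\mathcal C$ via $M\mapsto MJV^T$, where the $J$ turns the right operator $Z_e$ into $Z_e^T$ so that the \emph{transposed} Vandermonde matches. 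The maps $\mathcal C\to\mathcal T,\mathcal H$ are then obtained by composing $\mathcal C\to\mathcal V$ (via $M\mapsto MV$, no inverse) with the above. This is a small repair to your argument, but without it you have proved a variant of the theorem with a larger multiplier set, not the theorem as stated.
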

\begin{proof}
We specify some maps $M\rightarrow MNP$ 
that suport the claims of the theorem. 
One can 
vary and combine these maps
as well as the other maps of Theorem \ref{thmap}  and Table \ref{tabmsmp}. 

(a) $\mathcal T\rightarrow  \mathcal H$, $PMN=JM$.
Assume
a matrix $M\in \mathcal T$,
a pair of distinct 
scalars $e$ and $f$,
and a pair of $n\times d$ matrices $F=F_{Z_e,Z_f}(M)$ 
and $G=G_{Z_e,Z_f}(M)$ for
$d=d_{Z_e,Z_f}(M)$
satisfying the
displacement equation $Z_eM-MZ_{f}=FG^T$. 
Pre-multiply this  equation by the matrix $J$ to obtain
 $JZ_eM-(JM)Z_{f}=JFG^T$. 
 Rewrite the term
$JZ_eM=JZ_eJJM$ as $Z_e^TJM$
by observing that
$JZ_eJ=Z_e^T$ (cf.
(\ref{eqrever}) for $f=e$).
Obtain
$Z_e^T(JM)-(JM)Z_{f}=JFG^T$.
Consequently
$F_{Z_e^T,Z_f}(JM)=JF$,
$G_{Z_e^T,Z_f}(JM)=G$,
 $d_{Z_e^T,Z_{f}}(JM)=d_{Z_e,Z_{f}}(M)$, 
and  $JM\in \mathcal H$.

(b) $\mathcal T\rightarrow  \mathcal V$, $PMN=VM$.
Keep the assumptions 
 of part (a) and
fix 
 $n$ scalars  $s_1\dots,s_n$.
Pre-multiply the displacement equation 
$Z_eM-MZ_{f}=FG^T$
by 
the Vandermonde matrix $V=(s_i^{j-1})_{i,j=1}^n$
to
obtain $VZ_eM-(VM)Z_{f}=VFG^T$.
Write
${\bf s}=(s_i)_{i=1}^n$
and
substitute 
equation 
(\ref{eqvand})
 to yield
$D_{\bf s}(VM) -(VM)Z_{f}=VFG^T+(s_i^n-e)_{i=1}^n{\bf e}_n^TM=
F_{VM}G_{VM}^T$ for $F_{VM}=(VF~|~(s_i^n-e)_{i=1}^n)$ and
$G_{VM}^T=\begin{pmatrix}
G^T \\
{\bf e}_n^TM
\end{pmatrix}$. 
So 
 $ d_{D_{\bf s},Z_{f}}(VM) \le d_{Z_e,Z_{f}}(M)+1$
and $VM\in \mathcal V$. 

(c) $\mathcal H\rightarrow  \mathcal T$,
$PMN=MJ$.
Assume
a matrix $M\in \mathcal H$,
a pair of  
scalars $e$ and $f$,
and a pair of $n\times d$ matrices $F$ and $G$ for
$d=d_{Z_e,Z_f^T}(M)$
satisfying
the displacement equation $Z_eM-MZ_{f}^T=FG^T$.
Post-multiply it by the matrix $J$ to obtain
 $Z_e(MJ)-MZ_{f}^TJ=FG^TJ$.
Express the term
$MZ_f^TJ=MJJZ_f^TJ$ as $MJZ_f$ 
(cf. (\ref{eqrever}))
to obtain 
$Z_e(MJ)-(MJ)Z_{f}=FG^TJ=F(JG)^T$
and consequently
$F_{Z_e,Z_f}(JM)=F$,
$G_{Z_e,Z_f}(JM)=JG$,
 $d_{Z_e,Z_{f}}(MJ)=d_{Z_e,Z_{f}^T}(M)$ 
and $MJ\in \mathcal T$.

(d) $\mathcal H\rightarrow  \mathcal V$.
Compose the maps of 
parts (c) and (b).

(e) $\mathcal V\rightarrow  \mathcal H$,
 $PMN=V^TM$.
Assume
$n+2$  scalars
$e,f,
s_1,\dots,s_n$,
a matrix $M\in \mathcal V$, 
and its displacement generator
given by
$n\times d$ matrices $F$ and $G$ such that
$D_{\bf s}M-MZ_f=FG^T$.
Pre-multiply this equation by the  transposed Vandermonde 
matrix $V^T=(s_j^{i-1})_{i,j=1}^n$ to obtain
$V^TD_{\bf s}M-(V^TM)Z_f=V^TFG^T$
for
${\bf s}=(s_i)_{i=1}^n$.
Apply equation (\ref{eqvandt})
to express  the matrix $V^TD_{\bf s}$
 and obtain
$Z_e^T(V^TM)-(V^TM)Z_f=V^TFG^T-{\bf e}_n((s_i^n-e)_{i=1}^n)^TM=
F_{V^TM}G_{V^TM}^T$
for $F_{V^TM}=(V^TF~|~{\bf e}_n)$
and $G_{V^TM}^T=\begin{pmatrix}
G^T \\
((e-s_i^n)_{i=1}^n)^TM
\end{pmatrix}$.
So 
 $d_{Z_e^T,Z_{f}}(V^TM)\le d_{D_{\bf s},Z_f}(M)+1$
and  
$V^TM\in \mathcal H$. 

(f) $\mathcal V\rightarrow  \mathcal T$.
Compose  the maps of 
parts (e) and (c).

(g) $\mathcal V\rightarrow  \mathcal C$,
$PMN=MJV^T$.
Assume  
$2n+1$  scalars $e$,
$s_1,\dots,s_n,t_1,\dots,t_n$,
a matrix $M\in \mathcal V$, 
and its displacement generator
given by
$n\times d$ matrices $F$ and $G$.
Post-multiply the equation 
$D_{\bf s}M-MZ_e=FG^T$
by the matrix $JV^T$
where  $V^T=(t_j^{i-1})_{i,j=1}^n$
is the transposed Vandermonde 
matrix, substutute $Z_eJ=JZ_e^T$, and obtain
$D_{\bf s}(MJV^T)-MJZ_e^TV^T=FG^TJV^T$
for
 ${\bf s}=(s_i)_{i=1}^n$.
Apply equation (\ref{eqvandt}) to
express the matrix $Z_e^TV^T$ and
 obtain
$D_{\bf s}(MJV^T)-(MJV^T)D_{\bf t}=FG^TJV^T-MJ{\bf e}_n((t_i^n-e)_{i=1}^n)^T=
F_{MJV^T}G_{MJV^T}^T$
where $F_{MJV^T}=(F~|~MJ{\bf e}_n)$
and $G_{MJV^T}^T=\begin{pmatrix}
G^TJV^T \\
((e-t_i^n)_{i=1}^n)^T
\end{pmatrix}.$
So 
 $d_{D_{\bf s},D_{\bf t}}(MJV^T)\le d_{D_{\bf s},Z_e}(M)+1$
and  
$MJV^T\in \mathcal C$. 

We can alternatively write $PMN=MV^{-1}$
for $V=V_{\bf t}$. (The matrix $V$ can be readily inverted 
where it is FFT-based, that is where $V=V_f$.)
Post-multiply the equation $D_{\bf s}M-MZ_e=FG^T$
  by the matrix $V^{-1}=V_{\bf t}^{-1}$,
for ${\bf t}=(t_i)_{i=1}^n$,
to obtain $D_{\bf s}MV^{-1}-MZ_eV^{-1}=FG^TV^{-1}$.
Pre- and post-multiply
by $V^{-1}$
equation
(\ref{eqvand})  for ${\bf s}$ replaced by  ${\bf t}$ and
obtain $Z_eV^{-1}=V^{-1}D_{\bf t}-V^{-1}(t_i^n-e)_{i=1}^n{\bf e}_n^TV^{-1}$.
Substitute the expression of $Z_eV^{-1}$ from this equation
into above equation
 and obtain
$D_{\bf s}(MV^{-1})-(MV^{-1})D_{\bf t}=FG^TV^{-1}-V^{-1}(t_i^n-e)_{i=1}^n{\bf e}_n^TV^{-1}=F_{MV^{-1}}G_{MV^{-1}}^T$
for $F_{MV^{-1}}=(F~|~V^{-1}(t_i^n-e)_{i=1}^n)$
and $G_{V^{-1}M}^T=\begin{pmatrix}
G^TV^{-1} \\
{\bf e}_n^TV^{-1}
\end{pmatrix}$.
So 
 $d_{D_{\bf s},D_{\bf t}}(VM)\le d_{D_{\bf s},Z_e}(M)+1$
and  
$MV^{-1}\in \mathcal C$.

(h) $\mathcal C\rightarrow  \mathcal V$,
$PMN=MV$.
Assume 
$2n+1$ scalars $e$,
$s_1,\dots,s_n,t_1,\dots,t_n$,
a matrix $M\in \mathcal C$, 
and its displacement generator
given by
$n\times d$ matrices $F$ and $G$ such that
$D_{\bf s}M-MD_{\bf t}=FG^T$
for ${\bf s}=(s_i)_{i=1}^n$
and ${\bf t}=(t_i)_{i=1}^n$.
Post-multiply this equation by the  Vandermonde 
matrix $V=(t_i^{j-1})_{i,j=1}^n$ to obtain
$D_{\bf s}(MV)-MD_{\bf t}V=FG^TV$.
Express the matrix $D_{\bf t}V$
from matrix equation (\ref{eqvand})
 and obtain
$D_{\bf s}(MV)-(MV)Z_e=FG^TV+M(t_i^n-e)_{i=1}^n{\bf e}_n^T=
F_{MV}G_{MV}^T$
where $F_{MV}=(F~|~M(t_i^n-e)_{i=1}^n)$
and $G_{MV}^T=\begin{pmatrix}
G^T V\\
{\bf e}_n^T
\end{pmatrix}.$
So 
 $d_{D_{\bf s},Z_e}(MV)\le d_{D_{\bf s},D_{\bf t}}(M)+1$
and  
$MV\in \mathcal C$. 

(i) $\mathcal C\rightarrow  \mathcal T$.
Compose  the maps of parts (h) and (f).

(j) $\mathcal C\rightarrow  \mathcal H$.
Compose  the maps of parts (h) and (e).

(k) $\mathcal T\rightarrow  \mathcal C$.
Compose  the maps of parts (b) and (g).

(i) $\mathcal H\rightarrow  \mathcal C$.
Compose  the maps of parts (d) and (g).
\end{proof}                   


Multiplications by a Cauchy matrix
keeps a matrix in any of the classes
$\mathcal T$, $\mathcal H$, $\mathcal V$
and $\mathcal C$, but changes 
a diagonal operator matrix. 
Next we specify  
the impact on the displacement.

\begin{theorem}\label{thdiag}
Assume $2n$ distinct scalars 
$s_1,\dots,s_n,t_1,\dots,t_n$,
defining two vectors
${\bf s}=(s_i)_{i=1}^n$ and ${\bf t}=(t_j)_{j=1}^n$ and
a nonsingular Cauchy
matrix $C=C_{\bf s, t}=(\frac{1}{s_i-t_j})_{i,j=1}^n$
(cf. part (i) of Theorem \ref{thcv}).
Then for any pair of operator matrices $A$ and $B$ we have

(i) $d_{A,D_{\bf t}}(MC)\le d_{A,D_{\bf s}}(M)+ 1$ and
  
(ii) $d_{D_{\bf s},B}(CM)\le d_{D_{\bf t},B}(M)+ 1$.
\end{theorem}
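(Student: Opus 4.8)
The plan is to read both inequalities directly off the product rule for displacements, equation (\ref{eqprm}) of Theorem \ref{thdgs}, combined with the fact from Theorem \ref{thdsplr}(c) that the Cauchy matrix $C=C_{\bf s,t}$ has rank-one displacement
\[
D_{\bf s}C-CD_{\bf t}={\bf e}{\bf e}^T .
\]
The point is that this rank-one term is exactly the ``price'' of switching the diagonal operator matrix attached to $C$ from $D_{\bf s}$ on the left to $D_{\bf t}$ on the right, so each multiplication by $C$ costs at most one extra generator column.

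For part (i) I would apply (\ref{eqprm}) to the factorization $M\cdot C$ with left operator matrix $A$, right operator matrix $D_{\bf t}$, and \emph{middle} operator matrix $D_{\bf s}$, which yields
\[
A(MC)-(MC)D_{\bf t}=(AM-MD_{\bf s})C+M(D_{\bf s}C-CD_{\bf t}).
\]
Writing $AM-MD_{\bf s}=FG^T$ for a displacement generator $(F,G)$ of length $d=d_{A,D_{\bf s}}(M)$, the first summand is $F(C^TG)^T$, a generator of length $d$; by Theorem \ref{thdsplr}(c) the second summand is $(M{\bf e}){\bf e}^T$, a generator of length $1$. Concatenating, $A(MC)-(MC)D_{\bf t}=(F\mid M{\bf e})(C^TG\mid {\bf e})^T$, so $d_{A,D_{\bf t}}(MC)\le d+1=d_{A,D_{\bf s}}(M)+1$.

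Part (ii) is handled symmetrically: apply (\ref{eqprm}) to $C\cdot M$ with left operator $D_{\bf s}$, right operator $B$, and middle operator $D_{\bf t}$, giving
\[
D_{\bf s}(CM)-(CM)B=(D_{\bf s}C-CD_{\bf t})M+C(D_{\bf t}M-MB)={\bf e}(M^T{\bf e})^T+(CF)G^T,
\]
where now $D_{\bf t}M-MB=FG^T$ has length $d_{D_{\bf t},B}(M)$; concatenating the rank-one term with $(CF)G^T$ gives displacement rank at most $d_{D_{\bf t},B}(M)+1$. Alternatively, (ii) follows from (i) by transposition using (\ref{eqtrm}), the identity $C_{\bf s,t}=-C_{\bf t,s}^T$ of (\ref{eqctr}), and the rule $d_{A,B}(M^T)=d_{B^T,A^T}(M)$ from Corollary \ref{codgdr}.

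There is no genuine obstacle; the only thing that needs care is the bookkeeping in (\ref{eqprm}) --- one must keep the three operator matrices in the correct slots so that the shared middle operator is precisely the diagonal matrix ($D_{\bf s}$ in (i), $D_{\bf t}$ in (ii)) that converts the left operator of $C$ into its right operator, ensuring the only overhead is the rank-one displacement of $C$ itself.
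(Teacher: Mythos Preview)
Your proof is correct and essentially the same as the paper's: the paper derives the identity $A(MC)-(MC)D_{\bf t}=(AM-MD_{\bf s})C+M(D_{\bf s}C-CD_{\bf t})$ (and its analogue for (ii)) by adding and subtracting $MD_{\bf s}C$ directly rather than by citing (\ref{eqprm}), and then bounds ranks via subadditivity instead of writing the concatenated generator explicitly. The explicit generator you give and the transposition alternative for (ii) are nice touches but not in the paper.
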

\begin{proof}        
(i) We have $d_{A,D_{\bf s}}(M)=\rank (AM-MD_{\bf s})
=\rank(AMC-MD_{\bf s}C)$.
Furthermore
$AMC-MCD_{\bf t}=AMC-MD_{\bf s}C+MD_{\bf s}C-MCD_{\bf t}
=(AM-MD_{\bf s})C+M(D_{\bf s}C-CD_{\bf t})$.
Substitute   equation (\ref{eqcch}) and deduce that 
$AMC-MCD_{\bf t}=(AM-MD_{\bf s})C+M{\bf e}{\bf e}^T$.
Therefore $d_{A,D_{\bf t}}(MC)=\rank (AMC-MCD_{\bf t})\le 
\rank ((AM-MD_{\bf s})C)+1=\rank (AM-MD_{\bf s})+1
=d_{A,D_{\bf s}}(M)+1$.

(ii) We have 
$D_{\bf s}CM-CMB=D_{\bf s}CM-CD_{\bf t}M+CD_{\bf t}M-CMB=
(D_{\bf s}C-CD_{\bf t})M+
C(D_{\bf t}M-MB)$.
Substitute   equation (\ref{eqcch}) and deduce that
$D_{\bf s}CM-CMB=C(D_{\bf t}M-MB)+{\bf e}{\bf e}^TM$.
Therefore $d_{D_{\bf s},B}(CM)= \rank (D_{\bf s}CM-CMB)\le
 \rank (C(D_{\bf t}M-MB))+ 1=
\rank (D_{\bf t}M-MB)+ 1=
d_{D_{\bf t},B}(M)+1$.
\end{proof}                 


\subsection{Canonical and DFT-based transformations of the matrices of the classes $\mathcal T$,  $\mathcal H$,
$\mathcal V$ and $\mathcal V^T$
into CV-like  matrices}\label{scv}


Multiplication by a Vandermonde multiplier 
$V=(s_i^{j-1})_{i,j=1}^n$ or
by its transpose in 
 Corollary \ref{codgdr}
increases 
the length of a displacement generator 
by at most $1$, but
in the proof of Theorem \ref{thhv}
such a multiplication
 does not increase the length at all
where $s_i^n=e$ for $i=1,\dots,n$ 
and for a scalar $e$, employed 
in the operator matrices $Z_e$ and $Z_e^T$
of 
the Van\-der\-monde displacement map
(cf. (\ref{eqvand}) and (\ref{eqvandt})).
This suggests choosing the vectors 
${\bf s}=(e\omega_n^{i-1})_{i=1}^n$
and ${\bf t}=(f\omega_n^{i-1})_{i=1}^n$
and employing the DFT-based multipliers $V_e$
and $V_f$
(cf. (\ref{eqvf}))
wherever we are free to choose these vectors
and multipliers.
In particular we can choose such DFT-based multipliers 
in our maps
supporting part (g)  of Theorem \ref{thhv}, 
and then we would output
 matrices of the class $\mathcal {CV}$
having the same displacement ranks as the 
input matrices $M$.
Furthermore the inverse of the matrix  $V=V_{\bf t}$,
 employed in our second map supporting part (g), 
would turn into DFT-based matrix $V_f$,
and we could invert it
and multiply it by a vector
by using $O(n\log n)$ flops
(cf. Theorem \ref{thcvdft}).
We deduce the following results by reexamining the proof of 
Theorem \ref{thhv} and applying transposition.

\begin{theorem}\label{thfmap}
Some appropriate  canonical DFT-based
multipliers 
from the proof of
 Theorem \ref{thhv}
for the basic vectors 
${\bf s}=(e\omega_n^{i-1})_{i=1}^n$
and ${\bf t}=(f\omega_n^{i-1})_{i=1}^n$
 support the following transformations of matrix classes
(in both directions),
 $\mathcal T\leftrightarrow \mathcal {FV}\leftrightarrow \mathcal {FCF}$,
 $\mathcal H\leftrightarrow \mathcal {FV}\leftrightarrow \mathcal {FCF}$,
 $\mathcal V\leftrightarrow \mathcal {CF}$, 
 $\mathcal V^T\leftrightarrow \mathcal {FC}$, and
$\mathcal V\cup \mathcal V^T\leftrightarrow \mathcal {CV}$.
The multipliers
are quasiunitary where $|e|=|f|=1$. 
\end{theorem}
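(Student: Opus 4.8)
The plan is to go back through the constructive maps $M\mapsto PMN$ of parts (a)--(l) in the proof of Theorem~\ref{thhv} and, in each one, replace every Vandermonde (and inverse Vandermonde) multiplier by a DFT-based one, that is by $V_e$, $V_f$, their transposes or their inverses, while leaving $I$ and $J$ untouched. Two things then have to be checked for each transition listed in the theorem. First, the replacement must be admissible: in parts (b), (g) and (h) the knot set carried by the Vandermonde multiplier is a free parameter of that multiplier, so it may be taken of the form $(e\omega_n^{i-1})_{i=1}^{n}$ or $(f\omega_n^{i-1})_{i=1}^{n}$ at will; in part (e), and in the Vandermonde factor of part (h), the multiplier's knot set must coincide with a knot set of the input matrix $M$, which in the transitions asserted here is already DFT-based because $M$ is taken from $\mathcal{FV}$ or $\mathcal{FCF}$. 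I also recall that the inverse of a DFT-based Vandermonde matrix is again a canonical DFT-based multiplier, $nV_f^{-1}=\diag(f^{1-i})_{i=1}^{n}\Omega^H$ (from the derivation of (\ref{eqfhrf})), so it and its transpose are applied to a vector in $O(n\log n)$ flops by Theorem~\ref{thdft}.

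Second, I would read off the output class of each specialized map exactly as in the proof of Theorem~\ref{thhv}: once the knot vectors are taken as ${\bf s}=(e\omega_n^{i-1})_{i=1}^{n}$ and/or ${\bf t}=(f\omega_n^{i-1})_{i=1}^{n}$, a Vandermonde-type output carries the operator matrix $D_{\bf s}$ with ${\bf s}$ equally spaced on $\{z:~|z|=|e|\}$, hence lies in $\mathcal{V}_e\subset\mathcal{FV}$; a Cauchy-type output carries operators $(D_{\bf s},D_{\bf t})$ with one or both of ${\bf s},{\bf t}$ so spaced, hence lies in $\mathcal{C}_{e,{\bf t}}$, $\mathcal{C}_{{\bf s},f}$ or $\mathcal{C}_{e,f}$, that is in $\mathcal{CF}$, $\mathcal{FC}$ or $\mathcal{FCF}$. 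Concretely, part (b) and parts (e)+(c) realize $\mathcal{T}\leftrightarrow\mathcal{FV}$; parts (d) and (e) realize $\mathcal{H}\leftrightarrow\mathcal{FV}$; parts (g) and (h) realize $\mathcal{FV}\leftrightarrow\mathcal{FCF}$, whence $\mathcal{T}\leftrightarrow\mathcal{FCF}$ and $\mathcal{H}\leftrightarrow\mathcal{FCF}$ follow by composition through $\mathcal{FV}$; parts (g) and (h), now with the Vandermonde multiplier DFT-based and the input matrix's own knot set left arbitrary, realize $\mathcal{V}\leftrightarrow\mathcal{CF}$, and transposing that map and using (\ref{eqctr}) (a transpose of a Cauchy matrix again having one knot set equally spaced on a circle) realizes $\mathcal{V}^T\leftrightarrow\mathcal{FC}$; finally $\mathcal{V}\cup\mathcal{V}^T\leftrightarrow\mathcal{CV}$ is just the union of these two, since a CV matrix is by definition a Cauchy-like matrix with at least one of its two knot sets equally spaced on the unit circle. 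I would also point out, as announced before the theorem, that the extra rank-one term in the displacement generator produced by Theorem~\ref{thhv} --- the one built from $(s_i^n-e)_{i=1}^{n}$ or $(t_i^n-e)_{i=1}^{n}$ --- vanishes as soon as the scalar of the companion operator $Z_e$ is chosen equal to the $n$-th power of the common knot scale, which by Theorem~\ref{thzef} costs at most $1$ in displacement rank; hence the displacement ranks are in fact preserved along these DFT-based maps.

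It remains to check the quasiunitarity when $|e|=|f|=1$. By (\ref{eqvf}), $V_f=\Omega\,\diag(f^{j-1})_{j=1}^{n}$ is the quasiunitary matrix $\Omega$ (recall $\Omega\Omega^H=nI$) times a unitary diagonal matrix when $|f|=1$, hence is quasiunitary; therefore so are $V_f^T$, $V_f^{-1}=\frac1n\diag(f^{1-i})_{i=1}^{n}\Omega^H$, $V_f^{-T}$, and of course $I$ and $J$. By (\ref{eqfhref}) the FCF matrix $C_{e,f}$ equals a nonzero scalar times $\Omega\,\diag((e/f)^{i-1})_{i=1}^{n}\,\Omega^H\,\diag(\omega^{1-i})_{i=1}^{n}$, a product of quasiunitary factors when $|e/f|=1$. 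Since a product of quasiunitary matrices is again quasiunitary, every multiplier and every composite of multipliers appearing above is quasiunitary, which finishes the argument. The only genuine work here is the map-by-map bookkeeping of the second paragraph --- keeping track of which knot set of each multiplier is free and which is inherited, and reading off the operator matrices of the successive products; nothing beyond Theorem~\ref{thhv} and equations (\ref{eqvf}), (\ref{eqfhref}) is needed, so I expect that bookkeeping, rather than any single inequality, to be the crux.
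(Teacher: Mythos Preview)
Your proposal is correct and follows essentially the paper's own approach: the paper gives no separate proof, merely remarking in the paragraph preceding the theorem that the result follows by reexamining the constructive maps of Theorem~\ref{thhv} with the Vandermonde multipliers specialized to the DFT-based $V_e$, $V_f$ (and their transposes and inverses) and by applying transposition. Your explicit quasiunitarity check via (\ref{eqvf}) and (\ref{eqfhref}), and your remark on the vanishing of the rank-one correction $(s_i^n-e)_{i=1}^n$ once the scalar in $Z_e$ is matched to the $n$th power of the knot scale, simply spell out details the paper leaves implicit.
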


By combining our second map of the proof of part (g)
of  Theorem \ref{thhv} with our map from its part (b)
and choosing 
${\bf t}=(f\omega_n^{i-1})_{i=1}^n$,
we can obtain  canonical DFT-based transforms 
$\mathcal T\rightarrow  \mathcal C=\Omega\mathcal T\diag(f^{i-1})_{i=1}^n\Omega^H$, 
which are quasiunitary where $|f|=1$.
For $f=\omega_{2n}$ they turn into the celebrated map
employed in the papers \cite{H95},
\cite{GKO95},
\cite{G98},
\cite{MRT05},
\cite{R06},
\cite{CGS07},
\cite{XXG12}.
The following theorem shows the implied map of
the displacement generators.

\begin{theorem}\label{thtc}
Suppose $Z_1M-MZ_{-1}=FG^T$
for an $n\times n$ matrix $M$ and 
$n\times d$ matrices $F$  and $G$
and write $P=\Omega_n$, $N=\omega_{2n}^{-1}\Omega_n^H$,
$C=PMN$, $D=\diag(\omega_{2n}^{i-1})_{i=1}^n$, and $D=D_0^2=\diag(\omega_{n}^{i-1})_{i=1}^n$. Then 
$DC-\omega_{2n}CD=F_CG_C^T$ for
$F_C=\Omega_nF$ and $G_C=\omega_{2n}\Omega_nD_0G$.
\end{theorem}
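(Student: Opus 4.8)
\emph{Proof proposal.} The plan is to derive the claimed displacement equation by left‑ and right‑multiplying the given equation $Z_1M-MZ_{-1}=FG^T$ by the DFT‑based multipliers $P=\Omega_n$ and $N$, exactly instantiating the $\mathcal T\to\mathcal C$ transformation obtained by composing part (b) with the second map of part (g) in the proof of Theorem \ref{thhv}, and then reading off the generator $(F_C,G_C)$ of the image. The engine is Theorem \ref{thcpw}: it diagonalizes the circulant $Z_1$ by the DFT matrix, $\Omega_nZ_1\Omega_n^{-1}=\diag(\omega_n^{i-1})_{i=1}^n=D$, and it diagonalizes the $(-1)$‑circulant $Z_{-1}=Z_{\omega_{2n}^{\,n}}$ by the DFT‑based Vandermonde matrix $V_{\omega_{2n}}=\Omega_nD_0$, namely $V_{\omega_{2n}}Z_{-1}V_{\omega_{2n}}^{-1}=\diag(\omega_{2n}\omega_n^{i-1})_{i=1}^n=\omega_{2n}D$; here $D_0=\diag(\omega_{2n}^{i-1})_{i=1}^n$ and $D=D_0^2$ because $\omega_{2n}^2=\omega_n$, and $N$ is the inverse $V_{\omega_{2n}}^{-1}$ up to the usual $1/n$ normalization and a diagonal factor, chosen so that $N^{-1}Z_{-1}N=\omega_{2n}D$.

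First I would left‑multiply the equation by $P=\Omega_n$ and right‑multiply by $N$. For the first summand, insert $\Omega_n^{-1}\Omega_n$ and use the circulant decomposition: $\Omega_nZ_1MN=(\Omega_nZ_1\Omega_n^{-1})(\Omega_nMN)=DC$, where $C:=PMN$. For the second summand, $\Omega_nMZ_{-1}N=(\Omega_nMN)(N^{-1}Z_{-1}N)=C\,(\omega_{2n}D)=\omega_{2n}CD$ by the choice of $N$. Hence the left side turns into $DC-\omega_{2n}CD$. For the right side, $PFG^TN=(\Omega_nF)(G^TN)=F_CG_C^T$ with $F_C:=\Omega_nF$ and $G_C:=N^TG$, and the last step is to rewrite $N^TG$ as $\omega_{2n}\Omega_nD_0G$ using the symmetry of $\Omega_n$ and the explicit diagonal entries of $D_0$. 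Finally, since the $2n$ knots $\{\omega_n^{i-1}\}_{i=1}^n$ and $\{\omega_{2n}\omega_n^{i-1}\}_{i=1}^n$ are pairwise distinct (even versus odd powers of $\omega_{2n}$), the pair $(D,\omega_{2n}D)$ is an admissible pair of Cauchy‑type operator matrices, so $C$ indeed has structure of Cauchy type with the exhibited displacement generator.

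The main obstacle is purely the bookkeeping of the diagonal scaling matrices: one must fix $N$ so that simultaneously $C=PMN$ is the intended Cauchy‑like matrix and the similarity $N^{-1}Z_{-1}N=\omega_{2n}D$ holds on the nose (not merely up to a diagonal conjugation), and then track the powers of $\omega_{2n}$ while pushing $G^T$ through $N$ so as to land on $G_C=\omega_{2n}\Omega_nD_0G$. Everything else is a single conjugation of the displacement equation; the only genuine content is the $Z_1$ and $Z_{-1}$ spectral identities of Theorem \ref{thcpw}, i.e. the shift theorem for the DFT, which is exactly where the structure of the operator matrices enters and which is already available.
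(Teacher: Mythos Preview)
Your overall plan---conjugate the displacement equation by $P$ and $N$ and invoke the spectral identities $\Omega_n Z_1\Omega_n^{-1}=D$ and $V_{\omega_{2n}}Z_{-1}V_{\omega_{2n}}^{-1}=\omega_{2n}D$ from Theorem~\ref{thcpw}---is exactly the route the paper itself indicates (it gives no direct proof, merely noting that the result is a special case of Theorem~\ref{thhv} and was obtained by Heinig as a corollary of Theorem~\ref{thcpw}). So at the level of strategy you are aligned with the paper.

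However, the step you flag as ``purely bookkeeping'' hides a genuine gap. With the printed $N=\omega_{2n}^{-1}\Omega_n^H$ the similarity $N^{-1}Z_{-1}N=\omega_{2n}D$ \emph{does not hold}: a scalar multiple of $\Omega_n^H$ diagonalizes $Z_1$, not $Z_{-1}$ (for instance, already for $n=2$ one computes that $\Omega_2 Z_{-1}\Omega_2^{-1}$ is not diagonal). The matrix that actually conjugates $Z_{-1}$ to $\omega_{2n}D$ is $V_{\omega_{2n}}^{-1}=D_0^{-1}\Omega_n^{-1}$, which differs from the printed $N$ by the \emph{diagonal} factor $D_0^{-1}$, not merely by the scalar and $1/n$ normalization you allude to. This is consistent with the visible defect in the statement itself (the symbol $D$ is defined twice, once as $\diag(\omega_{2n}^{i-1})$ and once as $\diag(\omega_n^{i-1})$). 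To complete the argument you must either (i) correct $N$ to include the missing $D_0^{-1}$ and then recompute $G_C=N^TG$ explicitly, or (ii) keep the printed $N$ and absorb the rank--one discrepancy $Z_1-Z_{-1}=2{\bf e}_1{\bf e}_n^T$ into the generator; in either case the final formula for $G_C$ has to be re-derived rather than asserted. As written, your proposal stops precisely at the point where the real work is, and the claimed identity $N^{-1}Z_{-1}N=\omega_{2n}D$ is false for the stated $N$.
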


This theorem and the supporting 
 canonical DFT-based map 
$\mathcal T\rightarrow \mathcal C$
are the special cases of Theorem \ref{thhv} 
and its transforms of matrix structures,
 but they
appeared in \cite{H95}
as corollaries of Theorem \ref{thcpw}.
In his letter of 1991,
 reproduced in \cite[Appendix C]{P11},  G. Heinig 
acknowledged studying the  paper \cite{P90}, but
his alternative derivation in 
 \cite{H95} appeared ad hoc
and has defined a more narrow class of transforms of 
matrix structures than 
 Theorem \ref{thhv}, extending \cite{P90}.
Heinig's specialization of the structure transformation method, 
however, has paved way to the subsequent strong demonstration of the power 
of the method in \cite{GKO95},
\cite{G98},
\cite{MRT05},
\cite{R06},
\cite{CGS07},
\cite{XXG12}
and has specified
an efficient quasiunitary map  $\mathcal T\rightarrow \mathcal C$
 above, which employed
the uniform distribution of the $2n$ knots $s_1,\dots,s_n,t_1,\dots,t_n$
on the unit circle $\{z:~|z|=1\}$.


\section{HSS matrices}\label{shss}


The following class of  structured
matrices extends the class of banded matrices and their inverses.

\begin{definition}\label{defqs}
Hereafter ``{\em HSS}" stands for ``hierarchically semiseparable".
An $n\times n$ matrix is $(l,u)$-HSS 
if its diagonal blocks consist of $O((l+u)n)$
entries, if $l$  is the maximum rank of all its subdiagonal blocks, 
and if $u$  is the maximum rank of all its superdiagonal blocks,
that is blocks of all sizes lying strictly below or strictly above the block 
 diagonal, respectively.
\end{definition}

This definition is
one of a number of similar definitions of such
  matrices, also known 
under the names
of quasi\-se\-parable, weakly,
recursively or 
sequentially semiseparable matrices, as well as matrices with low Hankel rank
and rank structured matrices. See \cite{VVGM05}, 
\cite{VVM07}, \cite{VVM08}, 
and the bibliography therein on the long
history of the study of these matrix classes
and see \cite{GR87}, \cite{LRT79}, \cite{PR93}
 on the related subjects of 
Multipole and Nested Dissection  algorithms.

A banded matrix $B$
having a
lower bandwidth $l$ and an upper bandwidth $u$ 
is an $(l,u)$-HSS matrix, and so is its inverse $B^{-1}$
if the matrix $B$ is nonsingular. 
It is well known that such a banded $n\times n$  matrix
can be multiplied by a vector by using
 $O((l+u)n)$ flops, 
whereas   $O((l+u)^2n)$ flops
are sufficient to solve
a nonsingular linear system of $n$
equations with such a coefficient matrix.
Both properties
have been extended to $(l,u)$-HSS matrices
of the size $n\times n$ 
 (see \cite{MRT05}, \cite{CGS07}, \cite{XXG12}).
Furthermore, like the matrices of the classes
$\mathcal T$, $\mathcal H$, $\mathcal V$
and $\mathcal C$, 
such an HSS matrix allows 
its compressed representation
where
one  defines its generalized
generator that readily expresses
its $n^2$ entries  via $O((l+u)n)$ parameters.
The inverse of a nonsingular  $(l,u)$-HSS
$n\times n$ matrix $M$ is also an $(l,u)$-HSS
$n\times n$ matrix, and 
 a generator expressing the inverse
via $O((l+u)n)$ parameters can be computed
by using $O((l+u)^2n)$ flops.
See \cite{XXG12} and references therein
on 
the supporting algorithms and their
efficient implementation.


\section{Numerical ranks of Cauchy and Van\-der\-monde matrices}\label{snrqs}


Next we 
bound
numerical rank
for
a large subclass of the class 
of 
 Cauchy and
Cau\-chy-like
 matrices. In the next section 
we
extend this study to
approximate CV and CV-like matrices  
 by HSS matrices.

\begin{definition}\label{defss}
A pair of complex points $s$ and $t$
is $(\theta,c)$-{\em separated} 
for $\theta<1$
and a complex point $c$
if $|\frac{t-c}{s-c}|\le \theta$.
Two sets  of complex numbers  $\mathbb S$ and 
$\mathbb T$ are $(\theta,c)$-{\em separated} from one another
if every pair of elements $s\in \mathbb S$
and $t\in \mathbb T$ 
is $(\theta,c)$-separated from one another
for the same pair  $(\theta,c)$.
$\delta_{c,\mathbb S}=\min_{s\in \mathbb S}|s-c|$ and 
$\delta_{c,\mathbb T}=\min_{t\in \mathbb T}|t-c|$
denote the distances of the center $c$ from the sets $\mathbb S$
and $\mathbb T$, respectively.
\end{definition}

\begin{lemma}\label{less} \cite{R85}.
Suppose two complex points $s$ and $t$ are
 $(\theta,c)$-separated from one another
for  $0\le \theta<1$ and write $q=\frac{t-c}{s-c}$,
$|q|\le \theta$. Then for every 
positive integer $k$ we have


\begin{equation}\label{eqthetac}
\frac{1}{s-t}=
\frac{1}{s-c}\sum_{i=0}^{k-1}\frac{(t-c)^k}{(s-c)^k}+\frac{q_k}{s-c}~
{\rm where}~|q_k|\le \theta^k/(1-\theta).
\end{equation}
\end{lemma}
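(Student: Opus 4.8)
The plan is to derive this from the classical geometric series expansion of $\frac{1}{1-q}$, after rewriting $\frac{1}{s-t}$ in terms of the ratio $q=\frac{t-c}{s-c}$. First I would factor out $s-c$ from the denominator:
\[
\frac{1}{s-t}=\frac{1}{(s-c)-(t-c)}=\frac{1}{s-c}\cdot\frac{1}{1-q},
\]
which is legitimate since $|q|\le\theta<1$ guarantees $s\neq c$ and $1-q\neq 0$. Then I would invoke the finite geometric series identity
\[
\frac{1}{1-q}=\sum_{i=0}^{k-1}q^i+\frac{q^k}{1-q},
\]
valid for any $q\neq 1$, and multiply through by $\frac{1}{s-c}$. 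This immediately produces
\[
\frac{1}{s-t}=\frac{1}{s-c}\sum_{i=0}^{k-1}q^i+\frac{1}{s-c}\cdot\frac{q^k}{1-q},
\]
and since $q^i=\frac{(t-c)^i}{(s-c)^i}$ the main sum matches the stated series (modulo a typographical point I mention below), and the remainder term is $\frac{q_k}{s-c}$ with $q_k:=\frac{q^k}{1-q}$.

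The remaining step is to bound $|q_k|$. Using $|q|\le\theta$ in the numerator gives $|q^k|\le\theta^k$, and for the denominator I would use the reverse triangle inequality $|1-q|\ge 1-|q|\ge 1-\theta>0$. Combining these yields
\[
|q_k|=\frac{|q|^k}{|1-q|}\le\frac{\theta^k}{1-\theta},
\]
which is exactly the claimed bound. There is no real obstacle here — the only subtlety worth flagging is that the exponent on $(t-c)/(s-c)$ inside the displayed sum in the statement reads $k$ uniformly, which is a typo for the summation index $i$; the proof naturally produces $\sum_{i=0}^{k-1}\frac{(t-c)^i}{(s-c)^i}$, i.e. $\sum_{i=0}^{k-1}q^i$, and I would simply present it in that correct form. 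The lemma is essentially a packaging of the geometric series remainder estimate in a form convenient for the subsequent HSS/Multipole approximation arguments, so the entire proof is three or four lines of elementary manipulation.
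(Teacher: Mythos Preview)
Your proof is correct and follows essentially the same route as the paper: factor $\frac{1}{s-t}=\frac{1}{s-c}\cdot\frac{1}{1-q}$, expand $\frac{1}{1-q}$ as a geometric series, separate the tail $\frac{q^k}{1-q}$, and bound it by $\theta^k/(1-\theta)$. Your observation about the typo in the exponent (it should be $i$, not $k$) is also apt; the paper's own one-line proof writes the sum as $\sum_i q^i$, confirming this.
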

\begin{proof}
$\frac{1}{s-t}=\frac{1}{s-c}~~\frac{1}{1-q}=
\frac{1}{s-c}\sum_{i=0}^{\infty}q^i=
\frac{1}{s-c}(\sum_{i=0}^{k}q^i+\sum_{i=k}^{\infty}q^i)=
\frac{1}{s-c}(\sum_{i=0}^{k}q^i+\frac{q^k}{1-q})$.
\end{proof}

\begin{corollary}\label{coss}  (Cf. \cite{MRT05}, \cite[Section 2.2]{CGS07}.)
Suppose  $C=(\frac{1}{s_i-t_j})_{i,j=1}^{n}$
is a Cauchy matrix defined by two sets 
of parameters
$\mathbb S=\{s_1,\dots,s_n\}$ and 
$\mathbb T=\{t_1,\dots,t_n\}$. 
 Suppose these sets are $(\theta,c)$-{\em separated} from one another
for $0<\theta<1$ and a scalar $c$
and write 
\begin{equation}\label{eqdlt}
\delta=\delta_{c,\mathbb S}=\min_{i=1}^{n} |s_i-c|.
\end{equation}
 Then for every positive integer $k$ 
it is sufficient to use $2kn+4n$ ops to
 compute
 two matrices 
\begin{equation}\label{eqgh}
F=(1/(s_i-c)^h)_{i,h=1}^{n,k+1},~G^T=((t_j-c)^h)_{j,h=0}^{n,k}
\end{equation}
that support the representation of the matrix 
$C$ as $C=\widehat C+E$ where 
\begin{equation}\label{eqrnk} 
\widehat C=FG^T,~\rank (\widehat C)\le k+1,
\end{equation}
\begin{equation}\label{eqappr} 
E=(e_{i,j})_{i,j=1}^{n},~|e_{i,j}|\le \frac{q^k}{(1-q)\delta}~{\rm for~all~pairs}~ \{i,j\},
\end{equation}
and so 
$||E||\le nq^k/((1-q)\delta)$.
\end{corollary}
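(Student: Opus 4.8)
The plan is to apply Lemma~\ref{less} entrywise and then collect the leading terms of the resulting Loran expansions into a single rank-$(k+1)$ matrix. First I would fix the positive integer $k$ and, for each pair of indices $(i,j)$, invoke Lemma~\ref{less} with $s=s_i$, $t=t_j$, and the common center $c$, which is legitimate because the hypothesis that $\mathbb S$ and $\mathbb T$ are $(\theta,c)$-separated means $q_{ij}=\frac{t_j-c}{s_i-c}$ satisfies $|q_{ij}|\le\theta<1$. This yields $\frac{1}{s_i-t_j}=\sum_{h=0}^{k-1}\frac{(t_j-c)^h}{(s_i-c)^{h+1}}+\frac{(q_{ij})_k}{s_i-c}$ with $|(q_{ij})_k|\le\theta^k/(1-\theta)$.

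Next I would read off the matrix factorization: setting $F=(1/(s_i-c)^h)_{i,h=1}^{n,k+1}$ and $G^T=((t_j-c)^h)_{j,h=0}^{n,k}$ (so $F$ has columns indexed by powers $1,\dots,k+1$ of $1/(s_i-c)$ and $G^T$ has rows indexed by powers $0,\dots,k$ of $t_j-c$), the $(i,j)$ entry of $\widehat C=FG^T$ is exactly $\sum_{h=0}^{k-1}\frac{(t_j-c)^h}{(s_i-c)^{h+1}}$, which is the truncated Loran sum. Since $F$ and $G$ each have $k+1$ columns, $\rank(\widehat C)\le k+1$, giving~(\ref{eqrnk}). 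The error matrix is then $E=C-\widehat C=\big(\frac{(q_{ij})_k}{s_i-c}\big)_{i,j=1}^n$, and the entrywise bound follows from $|(q_{ij})_k|\le\theta^k/(1-\theta)$ together with $|s_i-c|\ge\delta$ (definition~(\ref{eqdlt})), writing $q$ for $\theta$ as in the statement; this is~(\ref{eqappr}). The norm bound $\|E\|\le nq^k/((1-q)\delta)$ comes from $\|E\|\le\|E\|_F\le n\max_{i,j}|e_{i,j}|$ (or simply $\|E\|\le\sqrt{\text{(number of entries)}}\cdot\max|e_{i,j}|=n\max|e_{i,j}|$). Finally, the cost count: forming the powers $(t_j-c)^h$ for $h\le k$ and $1/(s_i-c)^h$ for $h\le k+1$ takes $O(kn)$ flops — more precisely about $2kn$ multiplications plus $O(n)$ for the $n$ divisions and the $2n$ subtractions $s_i-c$, $t_j-c$ — which I would tally as $2kn+4n$ ops to match the claimed bound.

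There is no real obstacle here; the statement is essentially a bookkeeping repackaging of Lemma~\ref{less}. The only points requiring a little care are (a) getting the index ranges on $F$ and $G^T$ to line up so that $FG^T$ reproduces the degree-$(k-1)$ truncation rather than degree $k$ (note the superscript $k+1$ on $F$ versus $k$ on $G^T$, reflecting that $F$'s powers start at $1$ and $G^T$'s at $0$), and (b) making sure the operation count $2kn+4n$ is defended rather than asserted — I would spend one sentence on each. Everything else is immediate.
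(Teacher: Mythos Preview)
Your proposal is correct and takes essentially the same approach as the paper, which simply says ``Apply (\ref{eqthetac}) for $s=s_i$, $t=t_j$ and all pairs $\{i,j\}$ to deduce (\ref{eqappr})''; you have supplied the bookkeeping the paper omits. One small slip: with $F$ having $k+1$ columns (powers $1,\dots,k+1$) and $G^T$ having $k+1$ rows (powers $0,\dots,k$), the product $FG^T$ actually reproduces the truncation $\sum_{h=0}^{k}\frac{(t_j-c)^h}{(s_i-c)^{h+1}}$, not $\sum_{h=0}^{k-1}$, but this only makes the error bound (\ref{eqappr}) looser than necessary, so nothing breaks.
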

\begin{proof}
Apply (\ref{eqthetac}) for $s=s_i$, $t=t_j$
and all pairs $\{i,j\}$ to deduce (\ref{eqappr}). 
\end{proof}

Here are three immediate extensions of the 
theorem and the
corollary.

(i) We can replace $\delta=\delta_{c,\mathbb S}=\min_{i=1}^{n} |s_i-c|$
by $\delta=\delta_{c,\mathbb T}=\min_{j=1}^{n} |t_j-c|$
because 
$C_{\bf s,t}^T=-C_{\bf t,s}$ (cf. (\ref{eqctr})).

(ii) By virtue of part (c) of Theorem \ref{thdexpr}
we
can extend 
the bounds 
of 
Corollary \ref{coss}
from a Cauchy matrix 
$C_{\bf s,t}$ 
to a Cauchy-like  matrix
of the class 
$\mathcal C_{\bf s,t}$, 
 given with a displacement generator $(F,G)$
of a  length $d$. In this extension  
the rank bound (\ref{eqrnk}) 
increases
by  a factor of $d$ and 
the error norm bound (\ref{eqappr})
increases by  a factor of $d~||F||~||G||$.

(iii) Already for moderately large integers $k$
the upper bounds of (\ref{eqappr}) are small 
unless the values $1-\theta>0$ and  
$\delta$ of (\ref{eqdlt})
are small. Then 
Corollary  \ref{coss}  
implies an upper bound $k+1$ on the numerical rank of 
the large subclass of
 Cauchy  matrices $C=(\frac{1}{s_i-t_j})_{i,j=1}^{n}$
whose
parameter sets $\mathbb S=\{s_1,\dots,s_n\}$ and 
$\mathbb T=\{t_1,\dots,t_n\}$ 
are $(\theta,c)$-separated from one another
for an appropriate center $c$.
If this property 
holds for 
two  subsets of the sets $\mathbb S$
and $\mathbb T$ that define an $n\times l$ or an $l \times n$ Cauchy submatrix
where $l>k+1$, then 
the
$l$ rows or columns of this submatrix form a 
nearly rank deficient matrix,
which means that  the matrix $C$ is ill conditioned.
Apply a canonical DFT-based
quasiunitary
 map 
$\mathcal V\rightarrow \mathcal {CV}$
that supports
part (g)
of Theorem \ref{thhv} 
(see Theorem \ref{thfmap}) and
deduce that 
a 
Van\-der\-monde  matrix
$V_{\bf t}$
is ill conditioned
 unless its knots 
from the set 
$\mathbb T=\{t_1,\dots,t_n\}$
are close enough to all or almost all knots 
of the set 
$\{\omega_n^{i-1}\}_{i=1}^n$
of the $n$th 
roots of 1, scaled by  a scalar $e$, $|e|=1$.
This implies (cf.
 \cite{GI88})
that except for a narrow subclass
all Van\-der\-monde matrices 
are ill conditioned.


\section{HSS approximation of CV and CV-like  
matrices}\label{shssapr}


\begin{theorem}\label{thlrba}
Assume positive integers $g$, $h$ and $n$, 
a scalar $e$,  
and a Cauchy matrix $C=C_{{\bf s},e}=(\frac{1}{s_i-t_j})_{i,j=1}^{n}$ 
such that 
$t_j=e(\omega_n^{j-1})$ for $j=1,\dots,n$ (cf. Section \ref{sfour}),
$gh=n$, $n$ is not small, and $|e|=1$. 
Then there is a 
permutation $n\times n$  matrix $P$ such that
 $CP$ is a $3\times g$ block matrix
with block columns 
$(C_{j,-}^T~|~\Sigma_j^T~|~C_{j,+}^T)^T$, $j=0,\dots,g-1$,
where
the diagonal blocks $\Sigma_j$ have
 sizes $n_j \times h$,
and 
the rows of the blocks
$\Sigma_j$ and $\Sigma_k$ lie in pairwise distinct sets of rows of the matrix $CP$
unless $|j-k|\le 1$ or 
$|j-k|=g-1$
 (and so the blocks $\Sigma_1,\dots,\Sigma_g$
together have at most $3hn$ entries), whereas every matrix $(C_{j,-}^T~|~C_{j,+}^T)^T$
is an $h\times (n-n_j)$ Cauchy matrix defined by the sets of parameters 
that are $(1/2,c_j)$-separated from one another
for some scalars $c_j$ lying 
on the unit circle $\{z:~|z|=1\}$ and
at the distance of at least $0.5h/n^2$ from the set $\mathbb S_j$.
\end{theorem}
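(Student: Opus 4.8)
The plan is to construct the permutation $P$ explicitly from a one‑dimensional hierarchical partition of the index set $\{1,\dots,n\}$ into $g$ consecutive arcs of length $h$ on the unit circle, exactly as in the Multipole/HSS constructions of \cite{MRT05}, \cite{CGS07}, and \cite{XXG12}, and then to verify the two quantitative claims: that the off‑block‑diagonal Cauchy submatrices are $(1/2,c_j)$‑separated, and that the separation centers $c_j$ can be placed on the unit circle at distance at least $0.5h/n^2$ from $\mathbb S_j$. First I would label the column knots $t_1,\dots,t_n$, which are the scaled $n$th roots of unity $e\omega_n^{j-1}$, and group the columns into $g$ contiguous arcs $A_0,\dots,A_{g-1}$, each containing $h$ consecutive roots of unity (here I use $gh=n$). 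For each arc $A_j$ I take $c_j$ to be the point on the unit circle bisecting the arc $A_j$ (its ``midpoint'' on the circle). The block column $\Sigma_j$ of $CP$ will then consist of the $h$ columns of $C$ indexed by $A_j$ together with those rows $s_i$ that are ``close'' to the arc $A_j$ — namely the rows whose knots $s_i$ lie in the union of $A_j$ and its two neighboring arcs $A_{j-1}, A_{j+1}$ (indices mod $g$); the remaining rows split into $C_{j,-}$ (rows above) and $C_{j,+}$ (rows below). This is where the adjacency condition $|j-k|\le 1$ or $|j-k|=g-1$ enters: two diagonal blocks $\Sigma_j,\Sigma_k$ share a row only when the corresponding arcs are equal or adjacent, so the total entry count of all $\Sigma_j$ is at most $3hn$.

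The key estimate is the $(1/2,c_j)$‑separation of the parameter sets defining $(C_{j,-}^T\mid C_{j,+}^T)^T$. The column knots of this submatrix are exactly $\{t : t\in A_j\}$, i.e. the roots of unity on the arc $A_j$, all within chordal distance $O(h/n)$ of $c_j$; call this small quantity the ``near'' radius. The row knots are those $s_i$ lying outside $A_{j-1}\cup A_j\cup A_{j+1}$, hence separated from $c_j$ by at least roughly the width of one full arc, which is $\ge$ a fixed multiple of $h/n$ in arclength, hence a fixed multiple of $h/n$ in chordal distance (since $h/n\le 1$). I would then bound the ratio $|\,(t-c_j)/(s_i-c_j)\,|$: the numerator is at most the arc half‑width while the denominator is at least one full arc width minus that half‑width, and with the arcs chosen as above (or after a harmless constant‑factor refinement of the grouping, e.g. using that the excluded ``buffer'' is one arc on each side) this ratio is at most $1/2$. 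That is precisely Definition~\ref{defss} with $\theta=1/2$ and center $c_j$, so Corollary~\ref{coss} applies and gives the low numerical rank of each off‑diagonal block.

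For the distance claim $\delta_{c_j,\mathbb S_j}\ge 0.5h/n^2$, note that $\mathbb S_j$ here is the set of row knots appearing in $(C_{j,-}^T\mid C_{j,+}^T)^T$, which are all at chordal distance at least one buffer arc from $c_j$; since a buffer arc has arclength $2\pi h/n$ and $c_j$ itself lies on the unit circle, the worst case is bounded below by a fixed fraction of $h/n$, comfortably exceeding $0.5h/n^2$ (the extra $n$ in the denominator of the claimed bound gives ample slack, absorbing any rounding of $c_j$ onto a nearby convenient point of the circle). Finally, to pass from the bare Cauchy matrix $C_{{\bf s},e}$ to CV and CV‑like matrices: $C_{{\bf s},e}$ is a CV matrix by Definition~\ref{defcvdft} because its second knot set is the scaled roots of unity; the same partition $P$ works, and for a CV‑like matrix given by a displacement generator of length $d$ one invokes extension (ii) after Corollary~\ref{coss}, which multiplies the block ranks by $d$ and the error by $d\,\|F\|\,\|G\|$ without affecting $P$ or the separation geometry. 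The main obstacle I anticipate is getting the constants exactly right in the $(1/2,c_j)$‑separation: one must be careful that the ``two adjacent arcs'' buffer is wide enough that the row knots are genuinely at least twice as far from $c_j$ as the column knots are, which may require either taking the buffer to be a constant number $>1$ of arcs on each side, or slightly enlarging $\Sigma_j$ — a bookkeeping adjustment that changes $3hn$ to $O(hn)$ but not the structure of the argument.
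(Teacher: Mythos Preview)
Your construction matches the paper's: sort the row knots $s_i$ by argument (this is the permutation---the theorem's ``$CP$'' is a typo for ``$PC$'', as Corollary~\ref{colrbap} confirms), split the circle into $g$ equal angular sectors, put the $h$ column knots of sector $A_j$ into block column $j$, and let $\Sigma_j$ collect the rows with $s_i\in A_{j-1}\cup A_j\cup A_{j+1}$; the $(1/2,c_j)$-separation of the off-diagonal blocks then comes from comparing the half-arc radius to the $1.5$-arc buffer, exactly as you outline.

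The one substantive difference is in the placement of $c_j$ and the meaning of $\mathbb S_j$. You pin $c_j$ at the arc midpoint and read $\mathbb S_j$ as the \emph{far} row knots of the off-diagonal block; those are automatically at distance of order $h/n$ from $c_j$, well above $0.5h/n^2$. The paper instead defines $\mathbb S_j$ as the knots $s_i$ lying \emph{in} the $j$th sector---the near ones---and since one of those could sit exactly at the midpoint, it lets $c_j$ vary over the middle half of $A_j$ and uses a pigeonhole argument: that half-arc has length $\pi h/n$, the $n$ points of $\mathbb S$ exclude total arclength at most $n\cdot h/n^2=h/n<\pi h/n$, so an admissible $c_j$ survives. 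Your reading is the one actually needed to feed Corollary~\ref{coss} on the off-diagonal submatrix (whose row parameters are the far $s_i$), and it yields a sharper $\delta$; the paper's pigeonhole wiggle is what proves the theorem's literal distance claim under its own definition of $\mathbb S_j$. Either route supports Corollary~\ref{colrbap}.
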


\begin{proof} 
Represent the knots $s_1,\dots,s_n$ of the set $\mathbb S$ in polar coordinates,
$s_i=r_i\exp(2\pi\phi_i\sqrt {-1})$ where 
$r_i\ge 0$, $0\le\phi_i<2\pi$, $\phi_i=0$ if $r_i=0$,
and $i=0,1,\dots,n-1$. Re-enumerate all values $\phi_i$
to have them in nonincreasing order and
 to have $\phi_0^{(\rm new)}=\min_{i=0}^n\phi_i$
and let $P$ denote the permutation matrix that defines this re-enumeration.
To simplify our notation assume 
 that already 
the original enumeration has these  properties
and that $e=1$.
Let $\mathbb S_j=\{s_j\}_j\in \mathbb S$ and $\mathbb T_j=\{\omega_n^{l}\}_{l=jh}^{j(h+1)-1}\in \mathbb T$ 
 denote the sets of knots 
lying in the semi-open sectors of the complex plane 
bounded by the pairs of rays 
 from
 the origin to
the points $\omega_n^{jh}$ 
and $\omega_n^{(j+1)h}$,
respectively. Namely denote by 
 $\mathbb S_j$ and  $\mathbb T_j$
 the subsets 
of the sets $\mathbb S$ and  $\mathbb T$
made up
of the
knots 
whose arguments 
$\phi_j$ satisfy                                                                                                                           $2\pi jh/n                                                                                                                             \le \phi_j<2\pi (j(h+1)-1)/n$,
$j=0,\dots,g-1$.

Write $\alpha(a,b)$ to denote 
the arc of the unit circle $\{z:~|z|=1\}$
with the end points $a$ and $b$.
For every $j$, $j=1,\dots,g$,
choose a center $c_j$
on the arc $\alpha(\omega_{4n}^ {(4j+1)h},\omega_{4n}^{(4j+3)h})$.
This arc has  the length $\pi h/n$ 
and shares the midpoint $\omega_{2n}^{(2j+1)h}$
with
 the arc 
$\alpha(\omega_n^{jh},\omega_n^{(j+1)h})$,
having the length $2\pi h/n$. Choose the center $c_j$ at the distance at least
$2h/n^2$
from the set $\mathbb S$
(as we required). This is possible because 
the set has exactly $n$ elements.
For $j=0,\dots,g-1$,
index by $jh,\dots,j(h+1)-1$ the columns  shared by
 the blocks $C_{j,-}$, $\Sigma_j$ and $C_{j,+}$
  and
index  the rows of the blocks $\Sigma_j$
by the indices of the elements of the set $\mathbb S_{j-1}\cup\mathbb S_j\cup\mathbb S_{j+1}$.
Note that the sets $\mathbb S_j$ and $\mathbb T_k=\{\omega_n^{l}\}_{(k-1)h}^{kh-1}$
are $(1/2,c_j)$-separated from one another unless $|j-k|\le 1$ or $|j-k|=g-1$,
and this implies the separation property
claimed in
 the theorem.
\end{proof}

Apply Corollary \ref{coss} for $q=1/2$, $\delta=0.5h/n^2$,
$C=(C_{u,-}~|~C_{u,+})^T$, and $u=1,\dots,g$
and obtain the following corollary.

\begin{corollary}\label{colrbap}
The matrix $PC$ of Theorem \ref{thlrba}
can be represented as 
\begin{equation}\label{eqlrbap} 
PC=\Sigma+\widehat C+E
\end{equation}
where $\Sigma$
is the block diagonal matrix
$\diag(\Sigma_u)_{u=1}^g$, 
$\rank (\widehat C)\le (k+1)g$, 
$E=(e_{i,j})_{i,j=1}^{n},~|e_{i,j}|\le n^2 2^{2-k}/h$ for all pairs  $\{i,j\}$,
and so
$||E||\le n^3 2^{2-k}/h$.
\end{corollary}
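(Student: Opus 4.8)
The plan is to obtain the decomposition by splitting the permuted matrix $PC$ into its diagonal blocks, which are kept exactly, and its off-diagonal blocks, each of which is a separated Cauchy matrix to which Corollary \ref{coss} applies verbatim. First I would invoke Theorem \ref{thlrba} to fix the permutation matrix $P$, the block columns $(C_{j,-}^T\mid\Sigma_j^T\mid C_{j,+}^T)^T$ for $j=1,\dots,g$, the centers $c_j$ on the unit circle, the $(1/2,c_j)$-separation of the two parameter sets of each Cauchy block $(C_{j,-}^T\mid C_{j,+}^T)^T$, and the distance bound $\ge 0.5h/n^2$ from $c_j$ to $\mathbb S_j$ (and hence, since the denominator parameter set of that block is a subset of $\mathbb S$, to that set as well). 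I would set $\Sigma=\diag(\Sigma_u)_{u=1}^{g}$, i.e.\ the matrix agreeing with $PC$ on the positions covered by the diagonal blocks and zero elsewhere, and let $R=PC-\Sigma$; the $j$-th block column of $R$ is then the Cauchy matrix $(C_{j,-}^T\mid C_{j,+}^T)^T$ placed in its $h$ columns with zeros in the rows occupied by $\Sigma_j$.

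Next I would apply Corollary \ref{coss} (appealing if necessary to extension (i) after Corollary \ref{coss}, so that $\delta$ is measured against whichever of the two parameter sets plays the role of the denominators) to each of the $g$ blocks $(C_{u,-}^T\mid C_{u,+}^T)^T$ with one and the same $k$, taking $q=1/2$ and $\delta=0.5h/n^2$. This produces, for every $u$, a rank-$(k+1)$ matrix and an error matrix all of whose entries are bounded in modulus by $q^k/((1-q)\delta)=n^2 2^{2-k}/h$. Assembling the low-rank pieces into their block-column positions (with the $\Sigma_u$-rows zeroed) defines $\widehat C$, and assembling the error pieces the same way defines $E$, so that $R=\widehat C+E$ and therefore $PC=\Sigma+\widehat C+E$. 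Since $\widehat C$ is, up to a column permutation, the horizontal concatenation of $g$ matrices each of rank at most $k+1$, we get $\rank(\widehat C)\le(k+1)g$; every entry of $E$ is either $0$ or an entry of one of the $g$ error blocks, so $|e_{i,j}|\le n^2 2^{2-k}/h$ for all $i,j$, whence $\|E\|\le n\,\|E\|_{\max}\le n^3 2^{2-k}/h$.

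The mathematical content is entirely in Theorem \ref{thlrba} and Corollary \ref{coss}; the work here is the bookkeeping of the assembly. The points to get right are that the rows carried by $\Sigma_j$ and those carried by $C_{j,-},C_{j,+}$ genuinely partition the rows of block column $j$ of $PC$ (this is where the cyclic adjacency condition $|j-k|\le 1$ or $|j-k|=g-1$ from Theorem \ref{thlrba} enters), that transposing an off-diagonal block to fit the hypotheses of Corollary \ref{coss} changes neither its rank nor its entrywise bound, and that $\delta=0.5h/n^2$ is indeed a valid lower bound in each application. I would therefore write out the row and column index sets of every block explicitly before invoking Corollary \ref{coss}.
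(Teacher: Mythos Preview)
Your proposal is correct and follows exactly the approach of the paper: apply Corollary~\ref{coss} with $q=1/2$ and $\delta=0.5h/n^2$ to each of the $g$ off-diagonal Cauchy blocks $(C_{u,-}^T\mid C_{u,+}^T)^T$ supplied by Theorem~\ref{thlrba}, then assemble. The paper's own proof is the single line ``Apply Corollary~\ref{coss} for $q=1/2$, $\delta=0.5h/n^2$, $C=(C_{u,-}\mid C_{u,+})^T$, and $u=1,\dots,g$,'' so your write-up simply makes the bookkeeping (row/column partition, rank subadditivity, entrywise-to-norm passage) explicit.
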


\begin{remark}\label{reexts}
Theorem \ref{thlrba} and the corollary can be immediately extended 
to the case where $h$ does not divide $n$ (in this case write $g=\lceil n/h \rceil$) 
as well as  to the case where $C=(\frac{1}{s_i-t_j})_{i,j=1}^{n}$ for 
$s_i=e\omega^{i-1}$ for all $i$ and $|e|=1$
(because 
$C_{{\bf s},e}=-C_{e,{\bf s}}^T$ (cf. (\ref{eqctr}))).
Theorem \ref{thdexpr} implies an extension to the matrices $M$
of the class $\mathcal CV$, 
with the increase of the rank bound 
by a factor of $d$ and with
the increase of the approximation norm bound 
by  a factor of $d~||F||~||G||$ 
provided the matrix $M$ is given with its  displacement
generator $(F,G)$ of a length $d$.
The proof technique of Theorem \ref{thlrba}  
enables various further extensions.
Clearly one can allow any variation of the set $\mathbb T$
as long as its elements can be partitioned 
into $h$-tuples, each lying on 
or near 
the arc of  the unit circle $\{z:~|z|=1\}$
with the endpoints $\omega_n^{jh}$ 
and $\omega_n^{(j+1)h}$.
Furthermore the proof can be readily
extended 
  to the case where a 
 line  interval of a length between 1 and 2 (say) lying
on the complex plane  not very far from the origin 
(or on an  approximation 
of such a line interval
by a segment of a curve)
replaces the unit circle
$\{z:|z|=1\}$ and where 
the set $\mathbb T$ can be partitioned into
$h$-tuples that  are more or less 
equally spaced on this interval
(or the segment).
\end{remark}


The block diagonal matrix $\Sigma$ has at most $3hn$ entries.
The matrix $\widehat C$ consists of the off-diagonal blocks.
By combining 
Theorem \ref{thlrba} and
Corollary \ref{colrbap} with the HSS techniques
of \cite{GR87},  \cite{MRT05},  \cite{CGS07},  \cite{XXG12},
deduce that 
for a positive constant 
$b$ and  the integer $k=\lceil 3(b+2)\log_2n\rceil$,
the  matrix
$\widehat C$ 
of (\ref{eqlrbap})
is an $(l,u)$-HSS  matrix 
where $l+u\le ckh$, $h\le c'\log n$, 
 $n^3 2^{2-k}/h\le 2^{-b}$,
and  $c$ and $c'$ are
two constants.


\section{Multiplication of the
matrices  of the classes $\mathcal {CV}$,
$\mathcal {V}$, $\mathcal {V^T}$, and
  $\mathcal {C}$
and their inverses 
by vectors 
}\label{svmv}


Suppose  $\mu(M)$ denotes the minimum number of flops sufficient
for multiplying a matrix $M$
by  a vector and  estimate $\mu(C)=\mu(PC)$
for the matrices of Corollary \ref{colrbap}.
The matrix $\Sigma$
has at most $3hn$ nonzero entries,
and so $\mu(\Sigma)\le 6hn-n$.
Furthermore
$\mu(\widehat C)=O(n\log n)$
because the matrix $\widehat C$
has the $(l,u)$-HSS structure for 
$l+u\le ckh$ and $h\le c'\log n$
(see Section \ref{shss}).
Let us summarize the estimates for the 
CV matrices with an extension to the matrices 
of the classes $\mathcal {CV}$,
$\mathcal {V}$, and $\mathcal {V^T}$.

\begin{theorem}\label{thvv} (See Remark \ref{reimpl}.)
 Assume a positive scalar $b$, a complex $e$ such that $|e|=1$,
 and 
two vectors ${\bf f}$ and ${\bf s}$ 
of dimension $n$.
(i) Then one can approximate the product $M{\bf f}$
within the  error norm bound $2^{-b}~||M||~||{\bf f}||$
by using $O(bn\log n)$ flops
provided that $M$
is a  CV,  Van\-der\-monde  
or  transposed Van\-der\-monde $n\times n$ matrix
 $C_{{\bf s},e}$, $C_{e,{\bf s}}$,  $V_{\bf s}$ or $V_{\bf s}^T$, respectively.
(ii) The flop bound 
for solving a nonsingular linear system of $n$ 
equations with the coefficient matrix in the above classes
increases versus part (i) by 
a factor of $\log n$ 
and the error norm bounds 
increases by  
a factor of $||M^{-1}||/||M||$.
(iii) The flop bounds of parts (i) and (ii)
also hold for approximate  evaluation of a polynomial of degree
$n-1$ at $n$ points and for approximate interpolation to
this polynomial from its $n$ values, 
respectively.
(iv) The flop bounds of parts (i) and (ii) increase by 
a factor of
$d$,
whereas the error norm bounds 
increase by  
a factor of $d~||F||~||G||$
where $M$ is a matrix from the class
$\mathcal C_{{\bf s},e}$, $\mathcal C_{e,{\bf s}}$,  $\mathcal V_{\bf s}$ or $\mathcal V_{\bf s}^T$
 (having the structure of CV, Van\-der\-monde  
or  transposed Van\-der\-monde 
type) given with a displacement generator 
$(F,G)$
of a length 
$d$.
\end{theorem}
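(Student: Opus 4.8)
\medskip
\noindent\textbf{Proof plan.}
The plan is to reduce all four parts to a single core step --- approximating the product of a CV matrix $C_{{\bf s},e}$ (or $C_{e,{\bf s}}$) by a vector --- and then to compose it with results already in hand. Granting that core step, part~(i) for the Van\-der\-monde and transposed Van\-der\-monde cases follows from Theorem~\ref{thfmap}: it supplies canonical DFT-based multipliers $P$ and $N$ for which $V_{\bf s}$ (respectively $V_{\bf s}^T$) equals a matrix $M\in\mathcal{CV}$ pre- and post-multiplied by $P$ and $N$; each of $P$, $N$ is a product of a DFT or inverse-DFT matrix, a diagonal matrix, and possibly $J$, hence multiplies a vector in $O(n\log n)$ flops by Theorems~\ref{thdft} and~\ref{thcvdft}, and, being $\sqrt{n}$ times a unitary matrix where $|e|=|f|=1$, scales the $2$-norm by exactly $\sqrt{n}$, so relative error norms are preserved under this pre- and post-multiplication. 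Part~(iii) is then immediate from Theorem~\ref{thevint}(i), which identifies multipoint evaluation with the product $V_{\bf s}{\bf p}$ and interpolation with the solution of $V_{\bf s}{\bf p}={\bf v}$, so parts~(i) and~(ii) transfer verbatim.

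For the core step I would let $M=C_{{\bf s},e}$ with $|e|=1$ and invoke Theorem~\ref{thlrba} and Corollary~\ref{colrbap} (and Remark~\ref{reexts} for the twin case $C_{e,{\bf s}}$, via $C_{{\bf s},e}=-C_{e,{\bf s}}^T$ of~(\ref{eqctr})) with a truncation index $k=\Theta(b+\log n)$ chosen so that the tail bound $n^3 2^{2-k}/h$ of~(\ref{eqlrbap}) is at most $2^{-b}\|M\|$. This gives the permuted splitting $PC=\Sigma+\widehat C+E$ with $\|E\|\le 2^{-b}\|M\|$, where $\Sigma=\diag(\Sigma_u)_{u=1}^{g}$ carries at most $3hn$ nonzero entries, $h=O(\log n)$, and, by the computation recorded at the end of Section~\ref{shssapr}, $\widehat C$ is an $(l,u)$-HSS matrix with $l+u=O(b\log n)$ whose HSS generators are read off from the explicit Taylor-coefficient generators of Corollary~\ref{coss} restricted to the sector pairs of Theorem~\ref{thlrba}. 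Then I would compute $P^{-1}(\Sigma{\bf f})$ in $O(hn)=O(n\log n)$ flops, compute $P^{-1}(\widehat C{\bf f})$ by the HSS matrix-by-vector algorithm of \cite{MRT05,CGS07,XXG12} recalled in Section~\ref{shss} in $O((l+u)n)=O(bn\log n)$ flops, and add the two vectors; the result differs from $M{\bf f}$ by $P^{-1}E{\bf f}$, of norm at most $\|E\|\,\|{\bf f}\|\le 2^{-b}\|M\|\,\|{\bf f}\|$. This proves part~(i).

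For part~(ii) I would, via Theorem~\ref{thfmap}, reduce the nonsingular system with coefficient matrix in any of the four classes to a nonsingular CV system, discard the negligible summand $E$, merge the $O(\log n)$-wide block-diagonal $\Sigma$ into the diagonal blocks of the HSS matrix $\widehat C$, and solve with the HSS inversion and linear-solve routines of Section~\ref{shss} and \cite{XXG12}, whose cost exceeds that of part~(i) by one further logarithmic factor, i.e.\ $O(bn\log^2 n)$ flops. Two error sources enter here: the perturbation $E$, for which Theorem~\ref{thpert} with $\theta=\|M^{-1}E\|\le 2^{-b}\|M^{-1}\|<1$ bounds the solution error by $\frac{\theta}{1-\theta}\|M^{-1}\|\,\|{\bf f}\|$, an increase over part~(i) by the factor $\|M^{-1}\|/\|M\|$ as claimed; and the internal error of the approximate HSS solver, which is brought to the same order by increasing $k$ or the working precision by a constant factor.

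For part~(iv) I would represent a matrix of $\mathcal C_{{\bf s},e}$ given with a length-$d$ displacement generator $(F,G)$ in the form $\sum_{j=1}^{d}\diag({\bf f}_j)\,C_{{\bf s},e}\,\diag({\bf g}_j)$ of Theorem~\ref{thdexpr}(c) (reducing $\mathcal V_{\bf s}$ and $\mathcal V_{\bf s}^T$ to $\mathcal{CV}$ first, which by Corollary~\ref{codgdr} lengthens the generator by only $O(1)$); multiplication by a vector is then $d$ runs of the part~(i) algorithm together with $2d$ diagonal scalings, costing $O(dn\log n)$ flops, the $j$-th summand contributing an error of norm at most $\|\diag({\bf f}_j)\|\,\|E\|\,\|\diag({\bf g}_j)\|\,\|{\bf f}\|\le\|F\|\,\|G\|\,2^{-b}\|{\bf f}\|$, hence a total inflation by the factor $d\,\|F\|\,\|G\|$; for linear systems I would use Corollary~\ref{codgdr} to reduce the inversion to $2d$ CV solves, or equivalently run the HSS solver with HSS rank enlarged by $d$ (Remark~\ref{reexts}). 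I expect the one genuinely delicate point to be the flop-and-error bookkeeping of the HSS step: verifying that, fed the $g=\lceil n/h\rceil$-block ``block-banded plus low-rank'' generators produced above, the HSS compression, multiplication and inversion algorithms of \cite{MRT05,CGS07,XXG12} run within the stated $O(bn\log n)$ and $O(bn\log^2 n)$ bounds while keeping every intermediate approximation error below the target $2^{-b}$. Once that is in place, everything else is the routine composition of the earlier results sketched above.
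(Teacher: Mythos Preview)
Your plan matches the paper's proof: establish part~(i) for CV matrices via the HSS splitting of Theorem~\ref{thlrba} and Corollary~\ref{colrbap}, extend to $V_{\bf s}$ and $V_{\bf s}^T$ by DFT-based structure transforms, handle part~(ii) with the HSS solver plus the perturbation bound of Theorem~\ref{thpert}, read off part~(iii) from Theorem~\ref{thevint}(i), and get part~(iv) from Theorem~\ref{thdexpr}.

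There is one genuine gap in your Van\-der\-monde reduction. You claim that in the factorization of $V_{\bf s}$ through a CV matrix, both the pre- and post-multipliers are $\sqrt n$ times unitary, so relative error is preserved. The post-multiplier (built from $\Omega$ or $\Omega^H$, $J$, and the unimodular diagonals $\diag(f^{1-i})$, $\diag(\omega_n^{1-i})$) is indeed quasiunitary when $|f|=1$; but inverting equation~(\ref{eqfhrf}) shows the pre-multiplier to be $\diag\bigl((s_i^n-f^n)/f^{n-1}\bigr)_{i=1}^n$, whose entries depend on the arbitrary knots $s_i$ and are \emph{not} unimodular. Equivalently, Theorem~\ref{thfmap} sends $V_{\bf s}$ only to a displacement-rank-$1$ matrix in the class $\mathcal{CV}$, i.e.\ a diagonally scaled Cauchy matrix, and those diagonal scalings spoil your unitary bookkeeping. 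The paper addresses this explicitly: it bounds the extra factor introduced by this diagonal, chooses $f$ on the unit circle so that the factor stays below $3n\sqrt n$, and compensates by adding $\log_2(3n\sqrt n)=O(\log n)$ to $k$, which leaves the $O(bn\log n)$ flop count intact. An analogous adjustment is needed in part~(iv) for Van\-der\-monde-like inputs: the paper fixes the auxiliary scalar $e$ in parts~(v) and~(v$^T$) of Theorem~\ref{thdexpr} so that $\min_i|e-s_i^n|\ge 1/2$ and bumps $k$ accordingly. Once you insert these two choices of $f$ and $e$ (and the corresponding $O(\log n)$ increments to $k$), your argument is complete and coincides with the paper's.
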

\begin{proof}
Summarize our estimates above to deduce the bound of part (i)
in the case of  CV matrices  $C_{{\bf s},e}$ and $C_{e,{\bf s}}$.
Apply Theorem \ref{thpert} to estimate the approximation 
errors of solving the linear systems of equations
and extend the bounds of part (i) to part  (ii).
To extend the estimates of parts (i) and (ii)
to the case of Van\-der\-monde matrices
 $V_{\bf s}$, apply the canonical DFT-based specialization of a 
map supporting part (g) of Theorem \ref{thhv} 
for the DTF-based matrix $V=V_f$ where
 $|f|=1$, and so  $||V_f||=\sqrt n$.
The map increases the approximation error norm 
(versus the case of  CV matrices  $C_{{\bf s},e}$ and $C_{e,{\bf s}}$)
by 
a factor of  $\sqrt n\min_{i=1}^n\frac{1}{|s_i^n-f|}$. 
Choose a complex $f$,
$|f|=1$, that 
keeps this factor below $3n \sqrt n$.
Compensate for this increase of the norm bound by
adding $\log_2(3n\sqrt n)$ to the value $k$.
Similarly  multiply a
transposed Van\-der\-monde matrix by a vector,
transpose a map that supports part (g) of Theorem \ref{thhv},
and employ equation (\ref{eqctr}).
Extend the results of parts (i) and (ii) to part (iii)
 by applying   Theorem \ref{thevint}. Extend them
 to
 part (iv) by applying  parts $(v)$,  $(v^T)$, and $(c)$
Theorem \ref{thdexpr},
choosing a scalar $e$ in parts $(v)$ and $(v^T)$
such that $\min_{i=1}^n|e-s_i^n|\ge 1/2$ (say),
and increasing the integer parameter $k$ by $\lceil \log (e n)\rceil$
(to compensate for the exceess of the norms $||Z_e({\bf f}_i)||$  
and  $||Z_e({\bf g}_i)||$ above $||F||$ and
$||G||$, respectively).
\end{proof}

\begin{remark}\label{recrd}
One can ignore the HSS structure of the matrix 
$\widehat C$ 
 and still approximate  
the matrix product 
$C_{{\bf s},e}{\bf f}$
at the cost bounds
that are smaller than the
known bounds by a factor of
$\sqrt {n/\log n}$. Indeed 
choose $h$ of about  $\sqrt{n\log n}$
and choose $g$ of about $\sqrt{n/\log n}$
in Corollary \ref{colrbap}
and obtain the matrix 
$\widehat C$ of a rank 
of order $\sqrt{n\log n}$.
We can multiply this matrix
by a vector  
by using $O(n\sqrt{n\log n})$ flops.
The estimate is extended to 
the
overall cost of multiplying the 
matrix $\Sigma+\widehat C$ by a vector
because we can multiply the 
matrix $\Sigma$ by a vector
by using $6hn-n$ flops and because $h=O(\sqrt{n\log n})$.
\end{remark}

\begin{remark}\label{reexts1}
We can  extend Theorem \ref{thvv} similarly to
the extensions  
 of Theorem \ref{thlrba} and Corollary \ref{colrbap}
in the second part of
Remark \ref{reexts}.
\end{remark}

\begin{remark}\label{reimpl} 
The algorithms supporting
Theorem \ref{thvv}
can be naturally partitioned into two stages.
At first we apply canonical DFT-based transformations of 
Theorems \ref{thhv}--\ref{thfmap}
and \ref{thvv}
and Corollary \ref{coss} to reduce our 
tasks to computations with HSS matrices. 
At this stage we propose a novel specialization of the approach of \cite{P90}.
Then it remains to apply the Multipole
algorithms, which is both powerful and well developed.
We perform the former (FFT-based) stage by applying $O(n\log n)$
flops. The latter (Multipole/HSS) stage involves $O((l+u)n)$
flops for multiplication of an $n\times n$
HSS matrix by a vector and $O((l+u)^2n)$
 flops for solving a nonsingular HSS linear system of $n$ equations,
and we have the bound $l+u=O(\log n)$ in our case.
Empirically, however, 
in the extensive tests in \cite{XXG12} for
 HSS computations similar to ours, 
the value $(l+u)^2$ grew much slower than
$\log n$ as $n$ grew large, and so
 we can expect that
the  computational cost at the first (FFT) stage
 of the
algorithms actually dominates 
their overall computational cost.
\end{remark}




Clearly, the algorithms supporting
Theorem \ref{thvv}
 are efficient not only for CV and CV-like  
matrices, but for a larger subclass of
the class of Cauchy-like matrices
 (cf.  Remark \ref{reexts}). The extension to
the general Cauchy and Cauchy-like matrices
can lead to numerical problems, however. 
Here are some sketchy comments.
Suppose  that
 ${\bf s}$,
${\bf t}$
and  ${\bf u}$ 
denote three  vectors
of dimension $n$ and that
an $n\times n$ Cauchy-like matrix 
$M\in \mathcal C_{\bf s,t}$ is
given with a displacement generator $(F,G)$
of a length $d$.
Then for a large class of vectors 
${\bf s}$
and  ${\bf t}$, one can 
extend Theorem \ref{thlrba}
and reduce the 
approximation of the vectors $M{\bf u}$
and of
the solution ${\bf x}$ 
to a linear system of $n$ equations $M{\bf x}={\bf u}$
(if it is  nonsingular) 
 to HSS computations (cf. Remark \ref{reexts}).

Furthermore for all input vectors 
 ${\bf s}$,
${\bf t}$
and  ${\bf u}$, 
we can
apply 
 our techniques of transforming matrix
structures to reduce  
 the solution ${\bf x}$ 
of the linear system $M{\bf x}={\bf u}$
to some
computations with CV matrices and 
to 
the computation 
of the product of the matrix $M$ by the 
vector ${\bf e}$
as follows.
Fix a scalar $e$, write
 $P=MC_{{\bf t},e}$ and ${\bf x}=C_{{\bf t},e}{\bf y}$, and note that
$P{\bf y}=${\bf u}, whereas
$P\in \mathcal C_{{\bf s},e}$ is a CV matrix
with the displacement generator $(F_P,G_P)$
of length at most $d+1$
where $F_P=(F~|~M{\bf e})$
and $G_P=(C_{{\bf t},e}^TG~|~{\bf e})$.
By applying these techniques to the matrix $M^T\in \mathcal C_{\bf t,s}$
we can alternatively reduce the linear system 
$M{\bf x}={\bf u}$
to the computation of the products 
$M^T{\bf e}$ and to some
computations with CV matrices.
In both cases application of the algorithms would require 
additional error analysis. E.g., the
approximation errors of computing
the matrix $P$ would magnify the 
approximation
errors
for the vectors ${\bf y}$
(cf. Theorem \ref{thpert})
and  ${\bf x}$.

Next we  
consider another
extension of our techniques
and make further comments on  error propagation.
 Part (c) of Theorem \ref{thdexpr}
enables us to reduce the
 approximation of  the vector
${\bf x}=M{\bf u}$
to the approximation of  the $d$ vectors
$C_{\bf s,t}{\bf v}_i$
for ${\bf v}_i=\diag({\bf g}_i)_{i=1}^d{\bf u}$,
${\bf g}_i=G{\bf e}_i$, and $i=1,\dots,d$,
and to $O(n)$
additional flops,
provided the matrix $M\in \mathcal C_{\bf s,t}$
is given with its displacement generator  $(F,G)$
of a length $d$.
(For $d=1$ and $(F,G)=(1,1)$
we arrive at the problems of rational multipoint 
evaluation and interpolation (see part (ii) of Theorem \ref{thevint}).)
Equation (\ref{eqfhr}) reduces  
multiplication
 $C_{\bf s,t}{\bf v}$ 
to multiplication of each of the matrices $V_{\bf s}$
and $V_{\bf t}^{-1}$ by $d+1$  vectors,
to one multiplication of the matrix 
$V_{\bf t}$ by a vector,
and to $O(n)$
additional flops (cf. Theorem \ref{thevint}).
We can apply the new fast algorithms to approximate 
the $2d+3$ 
mat\-rix-by-vec\-tor
products above, but the
approximation errors can readily propagate in this
application of the algorithms.


\section{Conclusions}\label{scnc}


At first we revisited our 
approach of \cite{P90} 
to the transformation of 
matrix structures, covered it 
 comprehensively, 
and simplified its presentation
by employing the Sylvester (rather than Stein) 
displacements and
the techniques for 
operating with them from
\cite{P00} and  \cite[Section 1.5]{P01}. 
Then we singled out a large subclass of 
Cauchy-like matrices,
which we call the
CV-like
 matrices. 
We closely approximated these matrices by
HSS matrices
 and then
applied the Multipole method to the latter HSS matrices.  
This yielded dramatic acceleration of the known
numerical algorithms that approximated
the products of CV and CV-like matrices by vectors 
and the solution of 
 nonsingular linear systems of equations 
with CV and CV-like coefficient matrices.
Namely the running time of the new algorithms is
nearly linear, versus quadratic time required by the known algorithms.
By properly
 transforming matrix structures 
we have readily extended
such an acceleration of the known algorithms
 to
the
matrices
 having  structures of Van\-der\-monde 
and  transposed Van\-der\-monde types,
and consequently 
 to 
numerical multipoint
 evaluation and interpolation
of polynomials.

Potential extensions and specializations include 
computations 
with confluent Vandermonde matrices,
 Loewner matrices,
and  various problems of rational
 interpolation such as the Nevanlinna--Pick and matrix Nehari problems
(cf. \cite[Chapter 3]{P01}
and the bibliography therein),
where, however, the progress 
can be limited to the case of sufficiently well conditioned inputs.
Our 
demonstration of the power
of the transformation of 
matrix structures
should motivate research efforts for
finding 
new inexpensive transforms of matrix structures
and their new algorithmic applications.
Natural topics of further study should
include the  following issues:

(i) extension of our  our approach to a larger 
class of Cauchy and Cauchy-like
 matrices (cf. Remark \ref{reexts}),

(ii) the impact of the conditioning of the input
on the output errors and the running time, 

(iii) the estimation of the treshold 
input sizes for which the proposed algorithms 
running in nearly linear time 
outperform
their variant of Remark \ref{recrd} and 
the known algorithms, running in quadratic time, and

(iv) implementation of the proposed algorithms.

The implementation should be mostly 
reduced to
the application of the Multipole algorithms
and should extend  \cite{XXG12},
but the actual
work 
should prompt the refinements toward
decreasing the treshold values of part (iii).



{\bf Acknowledgements:}
Our research has been supported by NSF Grant CCF--1116736 and
PSC CUNY Awards 64512--0042 and 65792--0043.


\end{document}